\DeclareMathSymbol{\subsetneq}{\mathrel}{AMSb}{"28}
\def\Cl{\operatorname{Cl}}
\def\htt{\operatorname{ht}}
\def\depth{\operatorname{depth}}
\def\depth{\operatorname{depth}}
\def\Spec{\operatorname{Spec}}
\def\Ass{\operatorname{Ass}}
\def\Ext{\operatorname{Ext}}
\def\Tor{\operatorname{Tor}}
\def\im{\operatorname{im}}
\def\Pf{\operatorname{Pf}}
\def\rank{\operatorname{rank}}
\def\coker{\operatorname{coker}}
\define\Hom{\operatorname{Hom}}
\def\Sym{\operatorname{Sym}}
\def\ann{\operatorname{ann}}
\def\grade{\operatorname{grade}}
\def\pd{\operatorname{pd}}
\def\HH{\operatorname{H}}
\newtheorem{theorem}{Theorem}[section]
\newtheorem{lemma}[theorem]{Lemma}
\newtheorem{corollary}[theorem]{Corollary}
\newtheorem{proposition}[theorem]{Proposition}
\newtheorem{dream lemma}[theorem]{Dream Lemma}
\newtheorem{observation}[theorem]{Observation}
\theoremstyle{definition}
\newtheorem{definition}[theorem]{Definition}
\newtheorem{remark}[theorem]{Remark}
\newtheorem{remark-no-advance}[equation]{Remark}
\newtheorem{remarks-no-advance}[equation]{Remarks}
\newtheorem{claim-no-advance}[equation]{Claim}
\newtheorem{proposition-no-advance}[equation]{Proposition}
\newtheorem{corollary-no-advance}[equation]{Corollary}
\newtheorem{remarks}[theorem]{Remarks}
\newtheorem{data}[theorem]{Data}
\newtheorem{notation}[theorem]{Notation}
\newtheorem{present summary}[theorem]{Present Summary}
\newtheorem{example}[theorem]{Example}
\newtheorem{chunk}[theorem]{}
\newtheorem{marching orders}[theorem]{Marching Orders}
\newtheorem{circle the wagons}[theorem]{Circle the wagons}
\newtheorem*{proof-of-claim-1}{Proof of Claim~\ref{claim1}}
\newtheorem*{proof-of-claim-2}{Proof of Claim~\ref{claim2}}
\newtheorem*{proof-of-key}{Proof of Lemma~\ref{key}}
\newtheorem*{proof-of-claim-3}{\rm Proof of Claim~\ref{claim3}}
\numberwithin{equation}{theorem}
\numberwithin{table}{theorem}
\numberwithin{figure}{theorem}
\begin{document}

\baselineskip=16pt

\title[An alternating matrix and a vector]{ 
An  alternating matrix and a vector,\\ with application to Aluffi algebras}

\date\today 
\author[Andrew R. Kustin]
{Andrew R. Kustin}

\thanks{AMS 2010 {\em Mathematics Subject Classification}.
Primary 13H10; Secondary 13A30}

\thanks{The 
author was partially supported by the the Simons Foundation
grant number 233597.}

\thanks{Keywords: Aluffi algebra, Bourbaki ideal, Gorenstein ideal, Grassmannian,  maximal Cohen-Macaulay module, module of derivations, perfect module, Pfaffians, Schubert variety, self-dual module, totally acyclic module.
}

\address{Department of Mathematics, University of South Carolina,
Columbia, SC 29208} \email{kustin@math.sc.edu}

\begin{abstract} Let $\mathbf X$ be a generic alternating matrix, $\mathbf t$ be a generic row vector, and $J$ be the ideal $\Pf_4({\mathbf X})+I_1({\mathbf {t X}})$. We prove that $J$ is a perfect Gorenstein ideal of grade equal to the grade of $\Pf_4({\mathbf X})$ plus two. This result is used by Ramos and Simis in their calculation of the Aluffi algebra of the module of derivations of the homogeneous coordinate ring of a smooth projective hypersurface. 
We also prove that $J$ defines a domain, or a normal ring, or a unique factorization domain if and only if the base ring has the same property. The main object of study in the present paper is the module $\mathcal N$ which is equal to the column space of 
$\mathbf X$, calculated mod $\Pf_4({\mathbf X})$. The module $\mathcal N$ is a self-dual maximal Cohen-Macaulay module of rank two; furthermore, $J$ is a Bourbaki ideal for  $\mathcal N$. The ideals which define 
the homogeneous coordinate rings of the Pl\"ucker embeddings of the  
Schubert subvarieties of the Grassmannian of planes are used in the study of the module $\mathcal N$. 
\end{abstract}

\maketitle

\tableofcontents

\bigskip

\section{Introduction.} 

\bigskip

Aluffi \cite{A04}  introduced a  class of algebras which are intermediate between
the symmetric algebra and the Rees algebra of an ideal in order to define the characteristic
cycle of a hypersurface parallel to the conormal cycle in intersection theory.
These algebras were  investigated by  Nejad and  Simis \cite{NS11}, who called them Aluffi algebras. At the end of his paper, Aluffi observed that it would be computationally desirable to up-grade his methods to more general schemes. A first step in the direction of Aluffi's proposed up-grade is a good notion of the Rees algebra of a module, such as the one described by Simis, Ulrich, and Vasconcelos in \cite{SUV03}. A second step in the direction of Aluffi's proposed up-grade is a good notion of the Aluffi algebra of a module, such as the one the one introduced by Ramos and Simis in \cite{RS15}.

Ramos and Simis compute the Aluffi algebra of the module of derivations of the homogeneous coordinate ring of a smooth projective hypersurface. In other contexts this module is also called the module of tangent vector fields or the differential idealizer or the module of logarithmic
derivations. As part of the Ramos-Simis program, it is necessary to understand the homological nature of the ideal $J=\Pf_4({\mathbf X})+I_1({\mathbf t}{\mathbf X})$, where ${\mathbf X}$ is a generic alternating matrix and ${\mathbf t}$ is a generic row vector. Simis told us that he and Ramos conjectured that $J$ is a Gorenstein ideal of height two more than the height of $\Pf_4({\mathbf X})$. The purpose of this paper is to prove the Ramos-Simis conjecture.

Let $R_0$ be an arbitrary commutative Noetherian ring, $\goth f$ be an integer with $4\le \goth f$,  $$\mathcal R
=R_0[\{x_{i,j}\mid 1\le i<j\le \goth f\}\cup \{t_i\mid 1\le i\le \goth f\}]$$ be 
a polynomial ring in $\binom{\goth f}2+\goth f$ indeterminates, $\mathbf X$ be the be the $\goth f\times \goth f$ alternating matrix with with $x_{i,j}$ in position (row $i$, column $j$) for $i<j$, $\mathbf t$ be the $1\times \goth f$ matrix with $t_j$ in column $j$, $I$ be the ideal $\Pf_4(\mathbf{X})$ which is generated   
by the set of Pfaffians of the principal $4\times 4$ submatrices of $\mathbf X$,
$K$ be the ideal $
I_1(\mathbf {tX})$, which is 
generated by the entries of the product of $\mathbf t$ times $\mathbf X$, and  
$J$ be the ideal $I+K$ of $\mathcal R$.
The main result in the paper, Theorem~\ref{main-Theorem}, is that $J$ is a perfect Gorenstein ideal in $\mathcal R$ of grade $\binom{\goth f-2}2+2$. In particular, if $R_0$ is a Gorenstein ring, then $\mathcal R/J$ is a Gorenstein ring. Some consequences of the main result are contained in Corollary~\ref{corollary} where it is shown that $\mathcal R/J$ is a domain, or a normal ring, or a unique factorization domain if and only if the base ring $R_0$ has the same property.

The main ingredient  in the proof of Theorem~\ref{main-Theorem} takes place over the polynomial ring $$R
=R_0[\{x_{i,j}\mid 1\le i<j\le \goth f\}].$$ We prove in Lemma~\ref{main-Dream-Lemma}  that ``the column space of $\mathbf X$, calculated mod $I$'', which is equal to the submodule
\begin{equation}\label{column}\textstyle \{{\mathbf X}\theta\in (\frac RI)^\goth f\mid \theta\in (\frac RI)^\goth f\} \quad\text{of $(\frac RI)^\goth f$,}\end{equation}
is a perfect $R$-module of projective dimension $\binom{\goth f-2}2$.
If $R_0$ is a Cohen-Macaulay domain, then we prove, in Observation~\ref{yet-to-come}.\ref{yet.d}, that
the module of (\ref{column}) is a self-dual maximal Cohen-Macaulay $R/I$-module of rank two; 
and we prove, see Remark~\ref{R4}.\ref{here it is}, 
that the ideal $J(\mathcal R/I)$, which is the central object in this paper, is a 
 Bourbaki ideal for $\mathcal R\otimes_R\text{(\ref{column})}$; and therefore, homological properties of (\ref{column}) are
inherited by  $\mathcal R/J$. For more discussion about Bourbaki ideals, see, for example, \cite{A66, M80, BHU87, SUV03}. 

The ideal $I$ is the ideal of ``quadratic relations'' which define the homogeneous coordinate ring of the image of the Pl\"ucker embedding of the Grassmannian  $\operatorname{Gr}(2,\goth f)$ into projective space $\mathbb P^{\binom{\goth f} 2-1}$. Properties of the Schubert subvarieties of $\operatorname{Gr}(2,\goth f)$ play a crucial role in our proof of the properties of the module 
 (\ref{column}). 

The ideal $I_1(\mathbf{tX})$ has already been studied. If $\goth f$ is odd, then 
$I_1(\mathbf{tX})$  is a type two almost complete intersection ideal introduced by Huneke and Ulrich in \cite{HU85} and further studied in \cite{K95}. If $\goth f$ is even, then $I_1(\mathbf{tX})$  is a mixed ideal; its unmixed part is $I_1(\mathbf{tX})+\Pf_{\goth f}({\mathbf X})$; see, for example, \cite{KPU-Ann}. This ideal is a deviation two, grade $\goth f-1$ Gorenstein ideal also introduced in \cite{HU85} and further studied in \cite{K86,S91,K92}.

\bigskip

\section{Notation, conventions, and preliminary results.}\label{Prelims}

\bigskip
\begin{chunk}Let $M$ and $N$ be  modules over a commutative Noetherian ring $R$. Whenever the meaning is unambiguous, we write $M^*$, $M\otimes N$, $\Hom(M,N)$, and $\bigwedge^i M$ in place of $\Hom_R(M,R)$, ${M\otimes_R N}$, $\Hom_R(M,N)$, and $\bigwedge_R^i M$, respectively.
\end{chunk} 

\begin{chunk} An element $x$ of a ring $R$ is {\em regular} on the $R$-module $M$ if $x$ is a non-zero-divisor on $M$. In other words, if $xm=0$ for some element $m\in M$, then $m=0$. \end{chunk} 

\begin{chunk}If $x$ is a non-nilpotent element of a commutative Noetherian ring $R$, then {\em the localization of $R$ at $x$}, denoted $R_x$, is the ring $S^{-1}R$ where $S$ is the set $\{1,x,x^2,x^3,\dots\}$. If $x$ is a regular element of $R$, then we use the notation $R_x$ and $R[x^{-1}]$ interchangeably. 
\end{chunk} 

\begin{chunk}We denote the ring of integers by $\mathbb Z$.
\end{chunk} 

\subsection{Perfection}\label{Terminology}

\bigskip
\begin{chunk}\label{perfection} Let $R$ be a Noetherian ring,  $I$ be a proper ideal of $R$, and $M$ be a non-zero finitely generated $R$-module. \begin{enumerate}[\rm(a)]  
\item The {\it grade} of $I$ is the length of a maximal regular sequence on $R$ which is 
contained in $I$. (If $R$ is Cohen-Macaulay, then the grade
of $I$ is equal to the height of $I$.) 
\item The $R$-module $M$ is called {\it perfect} if the grade of the annihilator of $M$ (denoted $\operatorname{ann}_R M$) is equal to the projective dimension of $M$ (denoted $\operatorname{pd}_R M$). The inequality \begin{equation}\label{auto}\grade(\operatorname{ann}_R M)\le \operatorname{pd}_R M\end{equation} holds automatically if $M\not=0$.
\item\label{constant-pd} If $M$ is a perfect $R$-module, then
$$\pd_{R_P}M_P=\grade \ann_{R_P}M_P=\grade\ann_RM$$ for all prime ideals $P$ in the support of $M$. (See, for example, \cite[Prop.~16.17]{BV}.)
\item \label{perfection.d} If $R$ is a polynomial ring over a field or over the ring of integers and $M$ is a finitely generated graded $R$-module, then  
$M$ is a perfect $R$-module if and only if $M$ is a Cohen-Macaulay $R$-module. (This is not the full story. For more information, see, for example, \cite[Prop.~16.19]{BV} or \cite[Thm.~2.1.5]{BH}.)

\item\label{gor} The ideal $I$  in  $R$ is called a {\it perfect ideal} if $R/I$ is a perfect $R$-module.
 A perfect ideal $I$ of grade $g$ is a {\it Gorenstein ideal}
 if $\operatorname{Ext}^g_R(R/I,R)$ 
is a cyclic R-module. 
\end{enumerate}
\end{chunk}

The concept of perfection is particularly useful because of the   ``Persistence of Perfection Principle'', which is also known as  the ``transfer of perfection''; see \cite[Prop. 6.14]{Ho75} or \cite[Thm. 3.5]{BV}.

\begin{theorem}\label{PoPP}
Let $R\to S$ be a homomorphism of Noetherian rings, $M$ be a perfect   $R$-module,  and   $\mathbb P$ be a resolution of $M$ by projective $R$-modules. If $S\otimes_RM\neq 0$ and $$\grade (\ann M)\le \grade (\ann(S\otimes_RM)),$$ 
then $S\otimes_RM$ is a perfect $S$-module with $\pd_S(S\otimes_RM)=\pd_RM$ and  $S\otimes_R\mathbb P$ is a  resolution of $S\otimes_RM$ by projective $S$-modules.
\end{theorem}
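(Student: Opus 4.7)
The plan is to apply the Buchsbaum--Eisenbud acyclicity criterion twice: once over $R$ to extract grade conditions on the ideals of maximal minors of the boundary maps from the known acyclicity of $\mathbb P$, and once over $S$ to lift those conditions back to acyclicity of $S\otimes_R\mathbb P$. Perfection of $S\otimes_R M$ will then fall out of a short chain of inequalities.

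Set $g=\pd_R M=\grade(\ann_R M)$ and write $\mathbb P\colon 0\to P_g\xrightarrow{\varphi_g}\cdots\xrightarrow{\varphi_1}P_0\to 0$ with $H_0(\mathbb P)=M$. Let $r_k=\sum_{i\ge k}(-1)^{i-k}\rank P_i$. Since $\mathbb P$ is acyclic, Buchsbaum--Eisenbud over $R$ gives $\rank\varphi_k=r_k$ and $\grade_R I_{r_k}(\varphi_k)\ge k$ for each $1\le k\le g$. Because the matrices representing $\varphi_k\otimes 1_S$ have entries that are the images in $S$ of entries of $\varphi_k$, one has $I_{r_k}(\varphi_k\otimes 1_S)=I_{r_k}(\varphi_k)\cdot S$, and the rank condition is automatically preserved. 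The task thus reduces to showing $\grade_S\bigl(I_{r_k}(\varphi_k)\cdot S\bigr)\ge k$ for $1\le k\le g$.

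The key step — and the one I expect to be the heart of the argument — is the radical inclusion $\sqrt{\ann_R M}\subseteq\sqrt{I_{r_k}(\varphi_k)}$ for every $k\ge 1$. For this, take $r\in\sqrt{\ann_R M}$, so $M_r=0$; the localized complex $\mathbb P_r$ is then a bounded exact complex of finitely generated projective $R_r$-modules, hence split exact, and in a split exact complex each boundary map has a one-sided inverse, forcing $I_{r_k}(\varphi_k)\cdot R_r=R_r$. Finite generation of $\ann_R M$ then yields an exponent $n_k$ with $(\ann_R M)^{n_k}\subseteq I_{r_k}(\varphi_k)$, and so
\[\grade_S\bigl(I_{r_k}(\varphi_k)\cdot S\bigr)\;\ge\;\grade_S\bigl((\ann_R M)\cdot S\bigr)\;=\;\grade_S\bigl(\ann_S(S\otimes_R M)\bigr)\;\ge\;g\;\ge\;k,\]
where the first inequality uses the displayed containment together with the fact that grade depends only on the radical of the ideal, the middle equality uses that $(\ann_R M)\cdot S$ and $\ann_S(S\otimes_R M)$ cut out the same closed subset of $\Spec S$ — namely $\Supp(S\otimes_R M)$ — and the last inequality is the hypothesis.

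With the grade conditions verified, Buchsbaum--Eisenbud over $S$ yields acyclicity of $S\otimes_R\mathbb P$, exhibiting a projective $S$-resolution of $S\otimes_R M$ of length at most $g$. Combining this with (\ref{auto}) and the hypothesis produces the rigidity chain
\[g\;\le\;\grade_S\bigl(\ann_S(S\otimes_R M)\bigr)\;\le\;\pd_S(S\otimes_R M)\;\le\;g,\]
which forces equality throughout. Hence $S\otimes_R M$ is perfect with $\pd_S(S\otimes_R M)=\pd_R M$, establishing all three claims. The only piece requiring genuine care is the split-exactness step for $\mathbb P_r$ in the projective (rather than free) setting, but this is handled by a routine further localization.
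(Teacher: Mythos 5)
The paper offers no in-text proof of this result: it cites \cite[Prop.~6.14]{Ho75} and \cite[Thm.~3.5]{BV} and moves on, so there is no internal argument to compare against. Your proof via the Buchsbaum--Eisenbud acyclicity criterion is correct and is, in substance, the standard reference argument: extract the grade bounds on the ideals $I_{r_k}(\varphi_k)$ over $R$ from acyclicity of $\mathbb P$, establish the radical containment $\sqrt{\ann_R M}\subseteq\sqrt{I_{r_k}(\varphi_k)}$, push the grade bounds forward to $S$, and close with the chain of inequalities. Your identification of the radical containment as the crux --- proved by localizing away from $\Supp M$ to get a split exact complex, for which the relevant ideal of minors is the unit ideal --- is exactly the right move, and the verification that $(\ann_R M)\cdot S$ and $\ann_S(S\otimes_R M)$ define the same closed subset of $\Spec S$ (hence have the same grade) is correct.

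Two small points. First, you write $\mathbb P$ as $0\to P_g\to\cdots\to P_0$, i.e.\ of length exactly $g$, whereas the statement allows any finite projective resolution, possibly longer. For $k>g$ your chain yields only $\grade_S\bigl(I_{r_k}(\varphi_k)\cdot S\bigr)\ge g$, which is less than $k$. The fix is immediate but should be said: since $\pd_R M=g$, the $g$-th syzygy of $M$ is projective and the tail of $\mathbb P$ beyond degree $g$ is split exact, so for $k>g$ one already has $I_{r_k}(\varphi_k)=R$ (locally $\varphi_k$ is $\bigl(\begin{smallmatrix}I&0\\0&0\end{smallmatrix}\bigr)$), making the grade condition over $S$ vacuous. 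Second, the assertion that ``the rank condition is automatically preserved'' under $-\otimes_R S$ is a slight logical inversion: a priori $I_{r_k}(\varphi_k)\cdot S$ could be zero, dropping the rank. The clean order is to establish the grade bound $\grade_S\bigl(I_{r_k}(\varphi_k)\cdot S\bigr)\ge k\ge 1$ first, which forces $I_{r_k}(\varphi_k)\cdot S\neq 0$ (while $I_{r_k+1}(\varphi_k)\cdot S=0$ is inherited from $R$), and only then conclude $\rank(\varphi_k\otimes 1_S)=r_k$.
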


\subsection{Multilinear algebra.}

 \begin{chunk} \label{87Not1} 
Many of our calculations are made in a coordinate-free manner. If the calculation is coordinate free, then the signs take care of themselves. 
In particular, when working with Pfaffians, we prefer to use elements of an exterior algebra rather than to define and keep track of sign conventions which mimic operations that take place in an exterior algebra.

Let $R$ be a commutative Noetherian ring and $F$ be a free module of finite rank $\goth f$ over 
$R$.  
We make much use of 
the exterior algebras $\bigwedge^{\bullet} F$ and $\bigwedge^{\bullet} F^*$,  
the fact that 
$\bigwedge^{\bullet} F$ and $\bigwedge^{\bullet} F^*$ are modules over one another, and the fact that the even part of an exterior algebra comes equipped with a divided power structure.  
The
rules for a divided power algebra are recorded in \cite[section 7]{GL}  or  \cite[Appendix 2]{Ei95}.
(In practice these rules say that $w^{
(a)}$ behaves like $w^
a/(a!)$ would behave if $a!$ were
a unit in $R$.) 
\end{chunk}

\begin{chunk}\label{2.3} We recall some of the properties of the divided power structure on the subalgebra $\bigwedge^{2\bullet}F$ of the exterior algebra $\bigwedge^{\bullet}F$. Suppose that $e_1,\dots,e_{\goth f}$ is a basis for the free $R$-module $F$ and $$f_2=\sum_{1\le i_1<i_2\le \goth f}a_{i_1,i_2} \ e_{i_1}\wedge e_{i_2}$$ is an element of $\bigwedge^2F$, for some $a_{i_1,i_2}$ in $R$. 
Let $A$ be the $\goth f\times \goth f$ alternating matrix with
$$A_{i,j}=\begin{cases} a_{i,j},&\text{if $i<j$,}\\0,&\text{if $i=j$, and}\\-a_{i,j},&\text{if $j<i$}.\end{cases}$$
For each positive integer $\ell$, the $\ell$-th divided power of $f_2$ is 
$$f_2^{(\ell)}=\sum\limits_I A_I e_I\in\textstyle\bigwedge^{2\ell}F,$$ where the $2\ell$-tuple $I=(i_1,\dots,i_{2\ell})$ roams over all increasing sequences  of integers with $1\le i_1$ and $i_{2\ell}\le \goth f$, $e_I=e_{i_1}\wedge \dots \wedge e_{i_{2\ell}}$, and $A_I$ is the Pfaffian of the submatrix of $A$ which consists of rows and columns $\{i_1,\dots,i_{2\ell}\}$, in the given order.  Furthermore, $\bigwedge^{2\bullet}F$ is a DG$\Gamma$-module over $\bigwedge^{\bullet}F^*$. In particular, if $\tau\in F^*$ and $v_1,\dots,v_s$ are homogeneous elements of  $\bigwedge^{2\bullet}F$, then 
\begin{equation}\label{Gamma}\tau\left(v_1^{(\ell_1)}\wedge \dots\wedge v_{s}^{(\ell_s)}\right)=
\sum\limits_{j=1}^s\tau(v_j)\wedge v_1^{(\ell_1)}\wedge \dots\wedge
v_j^{(\ell_j-1)}
\wedge \dots\wedge v_{s}^{(\ell_s)}.\end{equation} For more details see, for example, 
\cite[Appendix~A2.4]{Ei95} or \cite[Appendix and Sect. 2]{BE77}.
\end{chunk}

The following   fact about the interaction of the module structures of 
$\bigwedge^\bullet F $ on  $\bigwedge^\bullet F^* $ and $\bigwedge^\bullet F^* $ on  $\bigwedge^\bullet F $   
 is well known; see \cite[section 1]{BE75} and \cite[Appendix]{BE77}.

\begin{proposition} \label{A3} Let $F$ be a free module of finite rank  
over a commutative
Noetherian ring $R$. If $f_1\in 
F$, $f_p\in \textstyle \bigwedge^{p}F$, and $\phi_q\in\textstyle \bigwedge^{q}(F^{*})$, then 
$$ (f_1(\phi_q))(f_p)=f_1\wedge (\phi_q(f_p))+(-1)^{1+q}\phi_q(f_1\wedge f_p).$$ 
\end{proposition}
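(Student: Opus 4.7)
The plan is to prove the identity by induction on $q$, the degree of $\phi_q$, using that both sides are $R$-linear in each of the three arguments $f_1$, $\phi_q$, and $f_p$. By multilinearity in $\phi_q$ we may assume that $\phi_q$ is decomposable, say $\phi_q = \tau_1 \wedge \cdots \wedge \tau_q$ with each $\tau_i \in F^*$; the inductive step then isolates one factor and reduces to degree $q-1$.

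The case $q = 0$ is trivial: since $F$ acts as zero on $\bigwedge^0 F^* = R$, the left-hand side vanishes, and the two summands on the right-hand side cancel. The case $q = 1$ is exactly the defining anti-derivation property of contraction; writing $\phi_q = \phi \in F^*$, one has $f_1(\phi) = \phi(f_1) \in R$, and the identity reduces to
\[
\phi(f_1)\, f_p = f_1 \wedge \phi(f_p) + \phi(f_1 \wedge f_p),
\]
which is just the rearranged form of the standard rule $\phi(f_1 \wedge f_p) = \phi(f_1)\, f_p - f_1 \wedge \phi(f_p)$.

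For $q \geq 2$, write $\phi_q = \tau \wedge \omega$ with $\tau \in F^*$ and $\omega \in \bigwedge^{q-1} F^*$. The action of $f_1$ on $\bigwedge^\bullet F^*$ is an anti-derivation of degree $-1$, so
\[
f_1(\tau \wedge \omega) = \tau(f_1)\,\omega - \tau \wedge f_1(\omega).
\]
Combined with the associativity of the module action of $\bigwedge^\bullet F^*$ on $\bigwedge^\bullet F$, which expresses the action of a wedge product as an iterated contraction, this lets both sides of the identity for $\phi_q$ be rewritten in terms of the separate actions of $\tau$ and $\omega$; applying the inductive hypothesis to $\omega$ and invoking the base case for $\tau$ then collapses everything to the desired equality.

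The principal obstacle is sign bookkeeping: one must verify that the parity $(-1)^{1+q}$ emerges correctly from the interplay of the anti-derivation sign in $f_1(\tau \wedge \omega)$, the sign in the iterated-contraction formula for $(\tau \wedge \omega)(v)$, and the parity shift $(-1)^{1+(q-1)}$ inherited from the inductive hypothesis. As a safeguard, one can independently check the identity on basis elements $f_1 = e_i$, $\phi_q = e_J^*$, $f_p = e_I$, splitting into the cases $i \in I$ versus $i \notin I$ and $i \in J$ versus $i \notin J$; this avoids the induction entirely but requires an explicit sign enumeration that the inductive argument neatly packages.
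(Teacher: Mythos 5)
The paper does not prove Proposition~\ref{A3} at all: the statement is followed immediately by a \texttt{\textbackslash qed} and a citation to \cite[section~1]{BE75} and \cite[Appendix]{BE77}, so the result is invoked as well-known. Your induction on $q$ is a correct and complete proof where the paper gives none. I checked the inductive step: writing $\phi_q=\tau\wedge\omega$, step by step one has
$(f_1(\phi_q))(f_p)=\tau(f_1)\,\omega(f_p)-\tau\big((f_1(\omega))(f_p)\big)$; applying the inductive hypothesis to $(f_1(\omega))(f_p)$ and then the $q=1$ anti-derivation rule to $\tau(f_1\wedge\omega(f_p))$, together with $(\tau\wedge\omega)(v)=\tau(\omega(v))$, gives
$(f_1(\phi_q))(f_p)=f_1\wedge\phi_q(f_p)+(-1)^{q+1}\phi_q(f_1\wedge f_p)$
after the $\tau(f_1)\,\omega(f_p)$ terms cancel, so the sign $(-1)^{1+q}$ emerges exactly as you anticipate. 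One small remark: the reduction to decomposable $\phi_q$ is harmless here precisely because $F$ is free of finite rank (one may take $\phi_q$ to be a basis monomial $e_J^*$), and your fallback suggestion of checking directly on basis elements $e_i$, $e_J^*$, $e_I$ is also fine and is in the spirit of how one would verify the cited conventions of Buchsbaum--Eisenbud.
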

\vskip-24pt\qed

\medskip The following fact is important for our purposes. We prove it carefully in order to illustrate some of the ideas contained in \ref{87Not1} and \ref{2.3}.

\begin{observation} \label{doo-8.2} Let $R$ be a commutative Noetherian ring, $F$ be 
 a free $R$-module of finite rank, and $f_2$ be an element of
$\bigwedge^{2}F$.
\begin{enumerate}[\rm(a)]\item \label{doo-8.2.a} If
 $\phi_3\in \bigwedge^3F^*$, then 
$[f_2(\phi_3)](f_2)=\phi_3(f_2^{(2)})$.
\item\label{doo-8.2.b}  If $\phi_1$, $\phi_1'$, and $\phi_1''$ are in $F^*$, then 
$$f_2(\phi_1\wedge \phi_1'\wedge \phi_1'')=f_2(\phi_1\wedge \phi_1')\cdot  \phi_1''-f_2(\phi_1\wedge \phi_1'')\cdot  \phi_1'+f_2(\phi_1'\wedge \phi_1'')\cdot  \phi_1.$$\end{enumerate}
\end{observation}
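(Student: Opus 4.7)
I would prove (b) first, then deduce (a) from it. Both sides of (b) are $R$-trilinear and alternating in $(\phi_1,\phi_1',\phi_1'')$, so by multilinearity it suffices to verify the identity when these are distinct basis elements of $F^*$. Fixing a basis $e_1,\dots,e_{\goth f}$ of $F$, writing $f_2=\sum_{i<j}a_{ij}\,e_i\wedge e_j$, and taking $\phi_1=e_a^*$, $\phi_1'=e_b^*$, $\phi_1''=e_c^*$ with $a<b<c$, one computes the left-hand side by iterated contraction: the element $e_i\wedge e_j\in\bigwedge^2 F$ acts on $e_a^*\wedge e_b^*\wedge e_c^*$ by applying $e_i$ and then $e_j$ as degree $-1$ derivations of $\bigwedge^\bullet F^*$ (this is the two-factor version of the derivation rule behind Proposition~\ref{A3}), and summing over $i<j$ yields $a_{ab}e_c^*-a_{ac}e_b^*+a_{bc}e_a^*$. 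The right-hand side, using the natural pairing $f_2(e_i^*\wedge e_j^*)=a_{ij}$ for $i<j$, evaluates to the same expression.

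For (a), reduce by $R$-linearity to the case $\phi_3=\alpha\wedge\beta\wedge\gamma$ with $\alpha,\beta,\gamma\in F^*$. Part (b) gives
\[
f_2(\phi_3)=f_2(\alpha\wedge\beta)\,\gamma-f_2(\alpha\wedge\gamma)\,\beta+f_2(\beta\wedge\gamma)\,\alpha\in F^*,
\]
so
\[
[f_2(\phi_3)](f_2)=f_2(\alpha\wedge\beta)\,\gamma(f_2)-f_2(\alpha\wedge\gamma)\,\beta(f_2)+f_2(\beta\wedge\gamma)\,\alpha(f_2).
\]
For the right-hand side of (a), the divided power rule~(\ref{Gamma}) with $\tau=\gamma$, $v_1=f_2$, $\ell_1=2$ gives $\gamma(f_2^{(2)})=\gamma(f_2)\wedge f_2\in\bigwedge^3 F$. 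Contracting successively by $\beta$ and $\alpha$, and applying the derivation rule of Proposition~\ref{A3} at each step, expands $\phi_3(f_2^{(2)})$ into three scalar-vector products; identifying $\beta(\gamma(f_2))=f_2(\beta\wedge\gamma)$ and its analogues via the natural pairing shows that, term by term, the two expressions agree.

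The main technical difficulty is the sign bookkeeping in the iterated contractions. The divided power $f_2^{(2)}$ is precisely what allows the repeated contraction against $f_2$ to be expressed cleanly: morally $f_2^{(2)}=\tfrac12 f_2\wedge f_2$, and the divided power structure makes this identity valid over an arbitrary base ring, so the ``diagonal'' contribution in the derivation expansion appears with the correct integral coefficient.
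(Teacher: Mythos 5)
Your argument is correct, but it travels a genuinely different road than the paper. The paper proves (\ref{doo-8.2.a}) \emph{directly} and coordinate-freely: it tests the two candidate elements of $F$ against an arbitrary $\phi_1\in F^*$ and runs a six-step chain of equalities using only the fact that $\bigwedge^\bullet F$ and $\bigwedge^\bullet F^*$ act on one another compatibly (in the sense of (\ref{compat})) together with the divided-power formula (\ref{Gamma}); it then says (\ref{doo-8.2.b}) is ``similar.'' You instead prove (\ref{doo-8.2.b}) first by basis reduction, exploiting trilinearity and alternation in $(\phi_1,\phi_1',\phi_1'')$ to reduce to $\phi_1=e_a^*,\ \phi_1'=e_b^*,\ \phi_1''=e_c^*$ with $a<b<c$, and then deduce (\ref{doo-8.2.a}) from (\ref{doo-8.2.b}) by expanding a decomposable $\phi_3=\alpha\wedge\beta\wedge\gamma$, contracting against $f_2$, and matching the result against the Leibniz-rule expansion of $\phi_3(f_2^{(2)})=\alpha\bigl(\beta\bigl(\gamma(f_2)\wedge f_2\bigr)\bigr)$. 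Both tactics are valid; the paper's has the virtue (which it advertises in \ref{87Not1}) of being manifestly sign-free, whereas your coordinate computation is more hands-on and elementary but makes you responsible for fixing consistent sign conventions for the interior products --- precisely the bookkeeping you flag at the end. The deduction of (a) from (b) is a nice dependency that the paper does not exploit; the paper treats the two parts as parallel.

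One small inaccuracy worth fixing: you cite ``the derivation rule of Proposition~\ref{A3}'' twice, but Proposition~\ref{A3} is \emph{not} the graded Leibniz rule for degree $-1$ contraction. It is a separate compatibility identity between the $\bigwedge^\bullet F$-action on $\bigwedge^\bullet F^*$ and the $\bigwedge^\bullet F^*$-action on $\bigwedge^\bullet F$. What your computation actually needs is the standard fact that an element of $F$ (resp.\ $F^*$) acts as a graded derivation of degree $-1$ on $\bigwedge^\bullet F^*$ (resp.\ $\bigwedge^\bullet F$); the paper does not explicitly isolate this, but it is standard and implicit in the references cited in \ref{87Not1} and \ref{2.3}. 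Swapping in that reference would make your write-up precise.
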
\begin{proof} 
We
prove (\ref{doo-8.2.a}) by showing that the two elements 
 $[f_2(\phi_3)](f_2)$ and $\phi_3(f_2^{(2)})$
of $F$ are equal 
by showing that $\phi_1\Big([f_2(\phi_3)](f_2)\Big) =\phi_1\Big(\phi_3(f_2^{(2)})\Big)$ for 
every element $\phi_1$ of $F^*$.  
Observe that
$$\begin{array}{lllllllll}\phi_1([f_2(\phi_3)](f_2))&=&-[f_2(\phi_3)][\phi_1(f_2)]
 &=&-[\phi_1(f_2)][f_2(\phi_3)]&=&-[\phi_1(f_2)\wedge f_2](\phi_3)\vspace{5pt}\\&=&-[\phi_1(f_2^{(2)})](\phi_3)&=&-\phi_3[\phi_1(f_2^{(2)})]&=&\phantom{-}\phi_1[\phi_3(f_2^{(2)})].\end{array}$$
The first and last equalities  hold because $\bigwedge^{\bullet}F$ is a module over the graded-commutative ring $\bigwedge^{\bullet}F^*$. The second and fifth equalities
follow from the fact that  the module actions of $\bigwedge^\bullet F^*$ on $ \bigwedge^\bullet F$ and $\bigwedge^\bullet F$ on $\bigwedge^\bullet F^*$ are compatible in the sense that  \begin{equation}\label{compat} \phi_i(f_i)=f_i(\phi_i)\text{ for }\phi_i\in \textstyle\bigwedge^iF^*\text{ and } f_i\in \textstyle\bigwedge^iF.\end{equation} The third equality  
is a consequence of  the module action of $\bigwedge^\bullet F$ on $\bigwedge^\bullet F^*$. The fourth equality 
is explained in 
(\ref{Gamma}). 

The proof of (\ref{doo-8.2.b}) is similar.
\end{proof}

\subsection{The set up.}

\bigskip
\begin{chunk}We set up the data in a coordinate-free manner in \ref{data2} and \ref{Not2}; a version with coordinates is given in \ref{R2}. 
The critical calculation, Lemma~\ref{main-Dream-Lemma}, involves ``$x_{i,j}$'s'', but not ``$t_i$'s''; the ambient ring for this calculation is called $R$. The information about $R$ is given in \ref{data2}.\ref{data2-one}, \ref{Not2}.\ref{Not2.a}, and \ref{R2}.\ref{R2.z}.
The main result in the paper,  Theorem~\ref{main-Theorem}, involves both ``$x_{i,j}$'s'' and ``$t_i$'s''; the ambient ring for this result is called $\mathcal R$. The ring $\mathcal R$ is an extension of $R$; the extra information about $\mathcal R$ is given in \ref{data2}.\ref{data2-both}, \ref{Not2}.\ref{Not2.b}, and \ref{R2}.\ref{R2.a} \end{chunk}

\begin{data}{\label{data2}} Let $\goth f$ be a positive integer, $R_0$  a commutative Noetherian ring,
and $V$ be a free $R_0$-module   of rank $\goth f$.
\begin{enumerate}[\rm(a)]
\item\label{data2-one} Let $R=\bigoplus_{i=0}^\infty R_i$ be the standard graded polynomial ring 
$$\textstyle R=\Sym_\bullet^{R_0}(\bigwedge_{R_0}^2V^*)$$and  $F$ be the free $R$-module $F=R\otimes_{R_0} V$. Consider the $R$-module homomorphism
$$\textstyle\xi\in \Hom_R(\bigwedge^2_R F^*, R)=\bigwedge_R^2F,$$ which is given as the composition
$$\xi:\textstyle\bigwedge^2_R F^*=R\otimes_{R_0} \bigwedge^2_{R_0} V^*=R\otimes R_1\xrightarrow{{\rm multiplication}}R.$$

\item\label{data2-both} View $\bigwedge_{R_0}^2V^*\oplus V$ as a bi-graded free $R_0$-module where each element of $\bigwedge_{R_0}^2V^*$ has degree $(1,0)$ and each element of $V$ has degree $(0,1)$. Let $\mathcal R$ be the bi-graded polynomial ring
$$\textstyle \mathcal R=\Sym_\bullet^{R_0}(\bigwedge_{R_0}^2V^*\oplus V)$$and  $\mathcal F$ be the free $\mathcal R$-module $\mathcal F=\mathcal R\otimes_{R_0} V$. Consider the $\mathcal R$-module homomorphism
$$\textstyle
\tau\in \Hom_\mathcal R(\mathcal F, \mathcal R)=\mathcal F^*$$ which is given as the composition
$$\mathcal F=\mathcal R\otimes_{R_0} V=\mathcal R\otimes \mathcal R_{\ 0,1}\xrightarrow{{\rm multiplication}}\mathcal R.$$
\item There is a natural inclusion map $\xymatrix{R\ar@{^(->}[r]&\mathcal R}$ and a natural projection map $\xymatrix{\mathcal R\ar@{->>}[r]& R}$. The $\mathcal R$-module $\mathcal F$ of (\ref{data2-both}) is also equal to $\mathcal F=\mathcal R\otimes_R F$; furthermore, the element $\xi\in\bigwedge_R^2F$ of (\ref{data2-one}) is also equal to the element $$\textstyle \xi=1\otimes \xi\text{ of }\bigwedge_{\mathcal R}^2\mathcal F=\mathcal R\otimes_R\bigwedge^2_RF.$$
\end{enumerate}\end{data}
\begin{notation}{\label{Not2}} Adopt Data  \ref{data2}. 
\begin{enumerate}[\rm(a)]\item\label{Not2.a}Let\begin{enumerate}[\rm(i)]\item $I$ be the  ideal
$
I=\im (\xi^{(2)}:\textstyle\bigwedge^4F^*\to R)$, of $R$,
\item $A$ be the ring $R/I$, \item\label{overline} $\overline{\phantom{x}}$ be the functor $A\otimes_R-$, and \item\label{Not2.a.iii} $N$ be the cokernel of the map $\overline{d_1}:\bigwedge^3\overline{F}^*\to \overline{F}^*$ where $d_1:\bigwedge^3F^*\to F^*$ is the map $d_1(\phi_3)=\xi(\phi_3)$, for $\phi_3\in \bigwedge^3F^*$.
\end{enumerate}
\item\label{Not2.b}Let \begin{enumerate}[\rm(i)]\item $K$ and $J$ be the ideals
\begin{align*}K&=\im (\tau(\xi):\mathcal F^*\to \mathcal R),\quad\text{and}\\
J&=I\mathcal R+K\end{align*} of $\mathcal R$,\item $\mathcal A$ be the ring $\mathcal R\otimes_R A$, and\item $\mathcal N$ be the $\mathcal R$-module $\mathcal R\otimes_R N$.
\end{enumerate}\end{enumerate}
\end{notation}

\begin{remark}{\label{R2}} Adopt Data~\ref{data2} and Notation~\ref{Not2}. If one picks dual bases $e_1,\dots,e_\goth f$ for $V$ and $e_1^*,\dots,e_\goth f^*$ for $V^*$, lets $x_{i,j}$ represent $e_j^*\wedge e_i^*\in \bigwedge^2_{R_0}V^*=\mathcal R_{\ 1,0}$, for $1\le i<j\le \goth f$, and lets $t_i$ represent $e_i\in V=\mathcal R_{\ 0,1}$, for $1\le i\le \goth f$, then the following statements hold.\begin{enumerate} [\rm(a)]
\item\label{R2.z} The standard graded polynomial ring $R$ is $R=R_0[\{x_{i,j}\mid 1\le i<j\le \goth f\}]$. Furthermore,
\begin{enumerate}[\rm(i)]\item\label{R2.b}  the element $\xi$ of $\bigwedge ^2F$ is $\xi=\sum\limits_{i<j} x_{i,j}\ e_i\wedge e_j$, 
\item\label{R2.e} the matrix for the $R$-module homomorphism $d_0:F^*\to F$, with $d_0(\phi_1)=\phi_1(\xi)$,  
with respect to the bases $\{e_j^*\}$ and $\{e_i\}$, is $-{\mathbf X}$, where ${\mathbf X}$ is the generic $\goth f\times \goth f$ alternating matrix 
whose    entry in position (row $i$, column $j$) is$$\begin{cases} x_{i,j}&\text{if $i<j$}\\0&\text{if $i=j$}\\-x_{j,i}&\text{if $j<i$},\end{cases}$$
and

\item\label{R2.g} the ideal $I$ of Notation \ref{Not2} is equal to $\Pf_4(\mathbf{X})$, which is the ideal of $R$ generated by the set of Pfaffians of the principal $4\times 4$ submatrices of $\mathbf X$.
\end{enumerate}
\item\label{R2.a} The bi-graded polynomial ring $\mathcal R$ is  $\mathcal
 R=R_0[\{x_{i,j}\mid 1\le i<j\le \goth f\}\cup\{t_i\mid 1\le i\le \goth f\}]$.
Furthermore, \begin{enumerate}[\rm(i)]
\item\label{R2.c}  the element  $\tau$ of $\mathcal F^*$ is  $\tau=\sum_i t_ie_i^*$ , 
\item\label{R2.d} the matrix for $\tau:\mathcal F\to \mathcal R$ with respect to the basis $\{e_i\}$ for $\mathcal F$ is the row vector $${{\mathbf t}=[t_1,\dots, t_\goth f]},$$ 
\item\label{R2.f} the element  $\tau(\xi)$ in $\mathcal F$ is an $\mathcal R$-module homomorphism $\mathcal F^*\to \mathcal R$ and the matrix for this homomorphism, with respect to the basis $\{e_i^*\}$ is the row vector ${\mathbf t}{\mathbf X}$, and 
\item\label{R2.h} the ideal $K$ of Notation \ref{Not2} is equal to $
I_1(\mathbf {tX})$, which is the ideal of $\mathcal R$ generated by the entries of the product of $\mathbf t$ times $\mathbf X$, and  
\item\label{R2.i} the ideal $J$ of Notation \ref{Not2} is equal to $\Pf_4(\mathbf {X})\cdot \mathcal R+
I_1(\mathbf {tX})$.
\end{enumerate} \end{enumerate}\end{remark}

\begin{remark}\label{promise}Adopt the language of \ref{data2}.\ref{data2-one} and \ref{Not2}.\ref{Not2.a}.  
The following maps appear often in the paper:
\begin{equation}\label{pre-cplx}\textstyle \bigwedge^3F^*\xrightarrow{d_1} F^*\xrightarrow{d_0}F \xrightarrow{\delta_1}\bigwedge^3F,\end{equation}with
$d_1(\phi_3)=\xi(\phi_3)$, $d_0(\phi_1)=\phi_1(\xi)$, and
$\delta_1(f_1)=f_1\wedge \xi$, for $\phi_3\in \bigwedge^3F^*$, $\phi_1\in F^*$, and $f_1\in F$. Use Observation~\ref{doo-8.2}.\ref{doo-8.2.a} and (\ref{Gamma}) to see that 
$$(d_0\circ d_1)(\phi_3)=[\xi(\phi_3)](\xi)=\phi_3(\xi^{(2)})\quad\text{and}\quad 
(\delta_1\circ d_0)(\phi_1)=[\phi_1(\xi)]\wedge \xi=\phi_1(\xi^{(2)});$$so, in particular 
$A\otimes_R (\text{\rm\ref{pre-cplx}})$ is a complex. In (\ref{prove-3.2}) we prove that a modification of ${A\otimes_R\text{(\ref{pre-cplx})}}$ is exact and in Observation~\ref{yet-to-come} we prove that $A\otimes_R\text{(\ref{pre-cplx})}$ is exact.

If one uses the notation Remark~\ref{R2}.\ref{R2.z}, then the matrix for $d_0$ is $-\mathbf X$, the matrix for $d_1$ has $\goth f$ rows and  $\binom{\goth f}3$ columns and  the column corresponding to
$e_k^*\wedge e_j^*\wedge e_i^*$, for $1\le i<j<k\le \goth f$, is
\setcounter{MaxMatrixCols}{20}
$$\bmatrix 0&\dots&0&x_{j,k}&0&\dots&0&-x_{i,k}&0&\dots&0&x_{i,j}&0&\dots&0\endbmatrix^{\rm T},$$where the non-zero entries appear in positions $i$, $j$, and $k$, respectively; see Observation~\ref{doo-8.2}.\ref{doo-8.2.b} and Remark~\ref{R2}.\ref{R2.b}. (We use $M^{\rm T}$ to represent the transpose of the matrix $M$.) The matrix for $\delta_1$ is the transpose of the matrix for $d_1$. 
\end{remark}

\section{The  main ingredient.}\label{dream}

In this section we prove the following result.

\begin{lemma}\label{main-Dream-Lemma}
Adopt Data~{\rm\ref{data2}.\ref{data2-one}} and   Notation~{\rm\ref{Not2}.\ref{Not2.a}} with $4\le \goth f$.
If the base ring $R_0$ is an arbitrary commutative Noetherian ring, then the $R$-module $N$ is perfect of projective dimension $\binom {\goth f-2}2$. 
\end{lemma}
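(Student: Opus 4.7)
The quick half of the lemma is the lower bound on the grade of $\ann_R N$. Since $N$ is by construction a module over $A=R/I$, we have $I\subseteq \ann_R N$, and the Józefiak--Pragacz resolution of $R/\Pf_4(\mathbf X)$ (which is universal and therefore valid for any Noetherian base $R_0$) gives $\grade I=\binom{\goth f-2}{2}$. Combined with (\ref{auto}), the lemma reduces to the upper bound $\pd_R N\leq \binom{\goth f-2}{2}$.

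To establish the upper bound I plan to invoke the Persistence of Perfection Principle (Theorem~\ref{PoPP}) to reduce from an arbitrary Noetherian base $R_0$ to the universal base $R_0=\mathbb Z$. The formation of $N$ commutes with base change in $R_0$, so if $N^u$ denotes the module built over $R^u:=\mathbb Z[x_{i,j}]$ then $N=R\otimes_{R^u} N^u$, and the grade hypothesis of Theorem~\ref{PoPP} is exactly the first paragraph. Thus once $N^u$ is proved to be perfect of projective dimension $\binom{\goth f-2}{2}$ over $R^u$, the theorem transfers the same conclusion to $N$ over $R$ for an arbitrary $R_0$.

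For the base case $R_0=\mathbb Z$, the ring $R^u$ is a graded polynomial ring over $\mathbb Z$, so by \ref{perfection.d} perfection of the finitely generated graded module $N^u$ is equivalent to Cohen-Macaulayness; since $IN^u=0$ and $A^u$ is itself Cohen-Macaulay of dimension $\dim R^u-g$ with $g:=\binom{\goth f-2}{2}$, it suffices to prove that $N^u$ is a maximal Cohen-Macaulay $A^u$-module of rank two. The rank-two assertion comes from an explicit generic-rank calculation: the generic point of $\Spec A^u$ lies off the vanishing of every $2\times 2$ Pfaffian of $\mathbf X$, so $\overline{\mathbf X}$ has rank $2$ there; modulo the (to be established in Observation~\ref{yet-to-come}) acyclicity of $A\otimes(\ref{pre-cplx})$, or more economically after inverting a single $2\times 2$ Pfaffian, $N^u$ is identified with the column space of $\mathbf X$ modulo $I^u$, and the rank computation follows. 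The rank-two property forces $\ann_{A^u}N^u=0$ and $\Supp_{R^u} N^u=V(I^u)$, so $\dim_{R^u} N^u=\dim R^u-g$. The MCM condition then yields $\depth_{R^u}N^u=\dim R^u-g$, and Auslander--Buchsbaum produces $\pd_{R^u}N^u=g$, as required.

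The heart of the argument---and its main obstacle---is therefore the MCM property of $N^u$ over $A^u$. My plan is to induct on $\goth f$, anchored by direct verifications of the small cases $\goth f=4$ (where $I^u$ is principal, generated by the single Pfaffian of $\mathbf X$) and $\goth f=5$ (where $I^u$ is the grade-$3$ Gorenstein Pfaffian ideal resolved by the Buchsbaum--Eisenbud complex). For the inductive step, localizing $R^u$ at the Plücker coordinate $x_{1,2}$ and performing row and column operations available over $R^u[x_{1,2}^{-1}]$ reduces $\mathbf X$ to the block-direct sum of an invertible $2\times 2$ alternating matrix and a generic $(\goth f-2)\times(\goth f-2)$ alternating matrix; the inductive hypothesis applied to the smaller matrix governs $N^u$ on this open set. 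To pass from the open piece back to $A^u$ one must control the depth along the Schubert boundary $V(x_{1,2})\cap\Spec A^u$, and this is the technical crux: I plan to construct a short exact sequence $0\to N'\to N^u\to N''\to 0$ in which $N'$ is supported on the boundary Schubert subvariety of $\operatorname{Gr}(2,\goth f)$ and $N''$ is controlled by the open piece, and then to use the fact that the homogeneous coordinate rings of the Schubert subvarieties are themselves Cohen-Macaulay to reduce the global depth estimate to a finite collection of Schubert-level computations. Once MCM is established in this way, the two reductions above deliver the perfection statement of the lemma in the stated generality.
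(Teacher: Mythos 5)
Your overall framing is correct and matches the paper's: reduce to $R_0=\mathbb Z$ by generic perfection, then reduce perfection to the statement that $N$ is a maximal Cohen--Macaulay $A$-module of the right dimension, and finish with Auslander--Buchsbaum. But the heart of the argument --- proving the MCM property --- is only announced as a plan, and the plan as stated cannot succeed. You propose to induct on $\goth f$ and, at the inductive step, to produce a short exact sequence
\[
0\to N'\to N^u\to N''\to 0
\]
with $N'$ supported on the Schubert boundary $V(x_{1,2})$. This is impossible unless $N'=0$: Lemma~\ref{exact-seq} exhibits an embedding $N\hookrightarrow A^3$, so $N$ is torsion-free over the domain $A$ and therefore has no nonzero submodule whose support is a proper closed subset of $\Spec A$. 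The boundary is controlled in the paper not by a submodule of $N$ but by placing $N$ at the \emph{left end} of the four-term exact sequence
\[
0\to N\to A^3\to A\to A'\to 0,\qquad A'=R/I_3,
\]
where $A'$ is the homogeneous coordinate ring of the codimension-two Schubert subvariety. The exactness of this sequence is exactly the Schubert-level computation you would need; it is established in Lemma~\ref{exact-seq} by explicit generator/relation manipulations that exploit the primality of the Kleppe--Laksov ideals $I_\lambda$. Once it is in hand, the standard depth-chase (both $A$ and $A'$ are Cohen--Macaulay and $\dim A'=\dim A-2$, so \cite[Prop.~1.2.9]{BH} applies) yields $\depth A_P\le\depth N_P$ for every $P$ in the support of $N$, which forces equality throughout the chain $\depth N_P\le\dim N_P\le\dim A_P=\depth A_P\le\depth N_P$ and hence perfection. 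Without that sequence (or an equivalent replacement for it), your argument stops exactly at the ``technical crux'' you flag, and the proposal does not yet constitute a proof.
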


In Observation~\ref{yet-to-come} we  show that the module $N$ of Lemma~\ref{main-Dream-Lemma} is isomorphic to the module of (\ref{column}).

\begin{remark}
The assertion of Lemma~\ref{main-Dream-Lemma} does not hold for $\goth f=3$. Indeed, in the language of Remark~\ref{R2}, $N$, which is resolved by
  $$0\to R\xrightarrow{\bmatrix x_{2,3}&-x_{1,3}&x_{1,2}\endbmatrix^{\rm T}}R^3,$$ is not a perfect $R$-module and has projective dimension one, which is not equal to  $\binom {\goth f-2}2$.  (We use $M^{\rm T}$ to represent the transpose of the matrix $M$.)
\end{remark}

It is convenient to let 
\begin{align*}&\text{
$A'$ be the ring $A/(x_{1,2}\ ,\ x_{2,3}\ ,\ x_{1,3})$},
\end{align*} 
in the language of Remark~\ref{R2}.\ref{R2.z}. Our proof of Lemma~\ref{main-Dream-Lemma} is given in \ref{prove-dream} at the end of the section; it depends on Lemma~\ref{exact-seq} and on information about the rings $A$ and $A'$ which is contained in Lemma~\ref{KL}.

\begin{lemma}\label{exact-seq}
Adopt Data~{\rm\ref{data2}.\ref{data2-one}} and   Notation~{\rm\ref{Not2}.\ref{Not2.a}} with $3\le \goth f$.
If the base ring $R_0$ is a commutative Noetherian domain, then
there is an exact sequence of $A$-modules{\rm:}
\begin{equation}\label{exact-seq.1}0\to N\to A^3\to A\to A'\to 0.\end{equation}
In particular, if $(0)$ is the zero ideal of $A$, then $N_{(0)}$ is isomorphic to $A_{(0)}\oplus A_{(0)}$.
\end{lemma}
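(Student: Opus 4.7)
The plan is to exhibit the maps in the sequence explicitly, verify its complex structure via Pfaffian identities, and then apply the Buchsbaum-Eisenbud acyclicity criterion to establish exactness.

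First, I would introduce three natural maps. The surjection $A\twoheadrightarrow A'=A/(x_{1,2},x_{1,3},x_{2,3})$ is canonical; the map $\psi\colon A^3\to A$ given by $(a,b,c)\mapsto x_{1,2}a+x_{1,3}b+x_{2,3}c$ has image exactly $\ker(A\to A')$; and a map $\phi\colon \overline{F}^*\to A^3$ is defined on the basis by $\bar e_k^*\mapsto(X_{3,k},-X_{2,k},X_{1,k})^{\mathrm T}$, where $X_{i,k}$ denotes the $(i,k)$ entry of the generic alternating matrix $\mathbf X$. I would verify $\phi\circ\overline{d_1}=0$ in $A$: the image under $\phi$ of the $(i,j,k)$-column $x_{j,k}e_i^*-x_{i,k}e_j^*+x_{i,j}e_k^*$ of $d_1$ has $m$th coordinate equal, up to sign, to $\Pf(\{m,i,j,k\})$ when $m\notin\{i,j,k\}$, and identically zero in $R$ otherwise---an immediate consequence of Observation~\ref{doo-8.2}.\ref{doo-8.2.b}. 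So $\phi$ descends to $\bar\phi\colon N\to A^3$. Similarly, $\psi\circ\bar\phi(\bar e_k^*)=x_{1,2}X_{3,k}-x_{1,3}X_{2,k}+x_{2,3}X_{1,k}$ equals $\Pf(\{1,2,3,k\})\in I$ for $k\ge 4$ and vanishes identically in $R$ for $k\in\{1,2,3\}$. Exactness at $A'$ holds by definition, and at $A$ it is $\im\psi=(x_{1,2},x_{1,3},x_{2,3})A=\ker(A\to A')$.

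The core of the argument is verifying $H_1=H_2=0$ in the complex $A^{\binom{\goth f}{3}}\xrightarrow{\overline{d_1}} A^{\goth f}\xrightarrow{\phi} A^3\xrightarrow{\psi} A$, which yields both exactness at $A^3$ and injectivity of $\bar\phi$. I would apply the Buchsbaum-Eisenbud acyclicity criterion. The rank conditions $1+2=3$ and $2+(\goth f-2)=\goth f$ I would verify by localizing at $x_{1,2}$: the $d_1$-relations indexed by $(1,2,k)$ for $k\ge 3$ read $X_{2,k}\bar e_1^*-X_{1,k}\bar e_2^*+x_{1,2}\bar e_k^*=0$ in $N$, so $N[x_{1,2}^{-1}]$ is free of rank~$2$ on $\bar e_1^*,\bar e_2^*$, forcing $\rank\phi=2$ and $\rank\overline{d_1}=\goth f-2$. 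For the grade conditions, the Pl\"ucker ring $A=R/\Pf_4(\mathbf X)$ is a domain (since $R_0$ is), and $x_{1,2}$ is a prime element of $A$ (the Schubert divisor $V(x_{1,2})$ in $\operatorname{Gr}(2,\goth f)$ being irreducible), so $x_{1,2},x_{1,3}$ is a regular sequence in $(x_{1,2},x_{1,3},x_{2,3})A$; hence $\grade I_1(\psi)\ge 2\ge 1$. For $\grade I_2(\phi)\ge 2$, a direct inspection of the $3\times\goth f$ matrix of $\phi$ shows that its $2\times 2$ minors using only the first three columns realize all six products $x_{a,b}x_{c,d}$ (including the squares) with $(a,b),(c,d)\in\{(1,2),(1,3),(2,3)\}$, so $I_2(\phi)\supseteq(x_{1,2},x_{1,3},x_{2,3})^2A$, which has grade~$2$ as the square of a grade-$2$ ideal.

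Finally, the ``in particular'' clause follows by tensoring the exact sequence with the fraction field $A_{(0)}$: since $x_{1,2}$ is then a unit, $A'_{(0)}=0$ and the sequence collapses to $0\to N_{(0)}\to A_{(0)}^3\to A_{(0)}\to 0$, which splits over the field $A_{(0)}$ to give $N_{(0)}\cong A_{(0)}^{\oplus 2}$. The main obstacle I anticipate is establishing the Pl\"ucker-geometric facts cleanly over an arbitrary Noetherian base domain $R_0$---namely, that $A$ is a domain and that $(x_{1,2})A$ is a prime ideal---which underpin both the grade-$2$ bound and the ``in particular'' clause; these classical properties of the Grassmannian are presumably provided by the auxiliary Lemma~\ref{KL} referenced in the text.
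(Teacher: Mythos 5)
Your setup (the maps $\psi$, $\phi$, the complex structure via Pfaffian identities, the rank computation by inverting $x_{1,2}$, and the grade estimates for $I_1(\psi)$ and $I_2(\phi)$) is correct, and it is a genuinely different route from the paper, which establishes exactness at $A^3$ and $\overline{F}^*$ by a direct element chase using primality of the Schubert ideals $I_\lambda A$ and of $(x_{1,2},\dots,x_{1,\goth f})A$. But the step where you invoke the Buchsbaum--Eisenbud acyclicity criterion to conclude $H_1=H_2=0$ is a genuine gap. That criterion applies to a complex of free modules bounded above by $0$, with rank and grade conditions imposed at \emph{every} position including the leftmost. Your complex
$A^{\binom{\goth f}3}\xrightarrow{\overline{d_1}} A^{\goth f}\xrightarrow{\phi} A^{3}\xrightarrow{\psi} A$
cannot be so truncated: $N=\coker\overline{d_1}$ has infinite projective dimension over $A$ (it is maximal Cohen--Macaulay over the non-regular ring $A$), so $\ker\overline{d_1}\neq 0$ and the prepended $0\to\bigwedge^3\overline{F}^*$ complex fails the conditions at position $3$. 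Checking the rank identity $\rank F_k=\rank f_k+\rank f_{k+1}$ and the grade bound $\grade I_{\rank f_k}(f_k)\ge k$ only for $k=1,2$ does not imply $H_1=H_2=0$.

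A concrete counterexample to that ``partial'' criterion: over $R=k[s,t,u,v]$, take the tail $R^6\xrightarrow{\partial_2}R^4\xrightarrow{\partial_1}R$ of the Koszul complex on $s,t,u,v$ and prepend $R^4\xrightarrow{s\,\partial_3}R^6$. The ranks add up ($4=1+3$ and $6=3+3$), $\grade I_1(\partial_1)=4\ge 1$, and $\grade I_3(\partial_2)\ge 2$; yet $H_2=\ker\partial_2/s\ker\partial_2\neq 0$. So your hypotheses at positions $1$ and $2$ are satisfiable without exactness there. To repair the argument one would need depth information about $N$ (which is established only downstream, via this very lemma), or one must fall back on a direct chase as in the paper, peeling off coordinates one at a time and at each step using that $x_{a,b}\cdot a_i$ lies in a prime Schubert ideal not containing $x_{a,b}$.
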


\begin{remarks}\begin{enumerate}[\rm(a)]
\item A strengthened version of Lemma~\ref{exact-seq} may be found in Proposition~\ref{improved}.
\item Lemma~{\rm\ref{exact-seq}} does hold when $\goth f=3$; indeed, 
(\ref{exact-seq.1}) becomes
$$0\to \frac{R^3}{\left(\bmatrix x_{2,3}\\-x_{1,3}\\x_{1,2}\endbmatrix\right)}\xrightarrow{\bmatrix 0&x_{1,2}&x_{1,3}\\-x_{1,2}&0&x_{2,3}\\-x_{1,3}&-x_{2,3}&0\endbmatrix} R^3\xrightarrow{\bmatrix x_{2,3}&-x_{1,3}&x_{1,2}\endbmatrix} R\to R_0\to 0,
$$which is exact.\end{enumerate}
\end{remarks}

\bigskip
The proof of Lemma~\ref{exact-seq} is given in \ref{prove-3.2}.

\begin{definition}\label{I-KL}Adopt the language of \ref{data2}.\ref{data2-one}, \ref{Not2}.\ref{Not2.a} and \ref{R2}.\ref{R2.z}. For each integer $\lambda$, between $1$ and $\goth f-1$, let $I_{\lambda}$ be the ideal $$I_\lambda=I+(\{x_{i,j}\mid 1\le i<j\le \lambda\})$$ of $R$. \end{definition}
\begin{example}\label{3.4}Retain the notation of Definition~{\rm\ref{I-KL}}. The ideal  $I_1$ is equal to $I$ (because the empty set generates the zero ideal) and the ideal $I_{\goth f-1}$ is equal to $(\{x_{i,j}\mid 1\le i<j\le \goth f-1\})$ (because $I$ is contained in the ideal $(\{x_{i,j}\mid 1\le i<j\le \goth f-1\})$. In particular, $A=R/I_1$ and $A'=R/I_3$.\end{example}
\begin{lemma}\label{KL} Adopt the language of {\rm \ref{data2}.\ref{data2-one}}, {\rm\ref{Not2}.\ref{Not2.a}}, {\rm \ref{R2}.\ref{R2.z}}, and {\rm\ref{I-KL}}. 
Let $\lambda$ be an integer between $1$ and $\goth f-1$. 
\begin{enumerate}[\rm(a)]\item\label{KL.a} If the base ring $R_0$ is an arbitrary commutative Noetherian ring, then $I_\lambda$ is a perfect ideal in $R$ of grade $\textstyle\binom{\goth f-2}2+\lambda-1$. In particular, if $4\le \goth f$, then $\grade I_3=\grade I_1+2$.
\item\label{KL.b} If the base ring $R_0$ is a commutative Noetherian domain, then $I_\lambda$ is a prime ideal.
\item\label{KL.c} If the base ring $R_0$ is an arbitrary commutative Noetherian ring, then  $I$ is a Gorenstein ideal in the sense of {\rm\ref{perfection}.\ref{gor}}. In particular, if $R_0$ is a Gorenstein ring, then $R/I$ is a Gorenstein ring. 
\end{enumerate}
\end{lemma}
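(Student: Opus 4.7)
The ideals $I_\lambda$ arise in the Pl\"ucker embedding of the Grassmannian $\operatorname{Gr}(2,\goth f)$: the set $\{(i,j) : 1\le i<j\le \lambda\}$ is an order ideal in the componentwise poset of Pl\"ucker indices, so $R/I_\lambda$ is the homogeneous coordinate ring of (the affine cone over) a Schubert subvariety of $\operatorname{Gr}(2,\goth f)$. The plan is to establish (a), (b), and (c) first when $R_0=\mathbb{Z}$ (or a field), and then promote to arbitrary Noetherian $R_0$ via Persistence of Perfection (Theorem~\ref{PoPP}) for (a) and (c), and by flat base change for (b).

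For (a) and (b), I would proceed by joint induction on $\lambda$. The base case $\lambda=1$ asserts that $I=\Pf_4(\mathbf{X})$ is perfect of grade $\binom{\goth f-2}{2}$ and prime (when $R_0$ is a domain); both assertions are classical results on the ideal of Pl\"ucker relations of $\operatorname{Gr}(2,\goth f)$, perfection being the standard result on Pfaffians of a generic alternating matrix. For the inductive step, note that passing from $I_\lambda$ to $I_{\lambda+1}$ adjoins the $\lambda$ generators $x_{1,\lambda+1},\dots,x_{\lambda,\lambda+1}$, while the grade must jump by only $1$. The mechanism is the Pfaffian identity on the principal $4\times 4$ submatrix indexed by $\{1,i,k,\lambda+1\}$ with $2\le i\le \lambda<k$: modulo $I_\lambda$ (which kills $x_{1,i}$) this identity reads $x_{1,k}\cdot x_{i,\lambda+1}=x_{1,\lambda+1}\cdot x_{i,k}$, so after inverting $x_{1,k}$ each ``redundant'' generator $x_{i,\lambda+1}$ becomes $(x_{i,k}/x_{1,k})\cdot x_{1,\lambda+1}$. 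The key sub-claim I would prove is that, for some $k>\lambda+1$, the element $x_{1,k}$ is a non-zerodivisor modulo $(I_\lambda,x_{1,\lambda+1})$; combined with the inductive primality of $I_\lambda$ (which makes $x_{1,\lambda+1}$ regular on $R/I_\lambda$), this forces $I_{\lambda+1}=(I_\lambda,x_{1,\lambda+1})$. Perfection and the grade formula then follow from the standard short exact sequence $0\to R/I_\lambda \xrightarrow{\cdot x_{1,\lambda+1}} R/I_\lambda \to R/I_{\lambda+1}\to 0$, and primality of $I_{\lambda+1}$ follows from that of $I_\lambda$ by checking it after inverting $x_{1,k}$ and observing that in the localization $R/(I_\lambda,x_{1,\lambda+1})[x_{1,k}^{-1}]$ we are simply killing a single polynomial-type element. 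I expect verifying the sub-claim to be the main technical obstacle, since it is essentially a statement about depth in Schubert subvarieties one level lower.

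For (c), the Grassmannian $\operatorname{Gr}(2,\goth f)$ is a smooth Fano variety whose homogeneous coordinate ring is known to be Gorenstein; over $R_0=\mathbb{Z}$ this means that the defining ideal $I$ admits a self-dual perfect resolution of length $\binom{\goth f-2}{2}$, and in particular $\Ext^g_R(R/I,R)$ is cyclic. Applying Theorem~\ref{PoPP} to this resolution transfers both perfection and the cyclicity of the top $\Ext$ to arbitrary Noetherian $R_0$, completing the proof.
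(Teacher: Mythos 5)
Your overall strategy --- treat $R/I_\lambda$ as the coordinate ring of a Schubert subvariety of $\operatorname{Gr}(2,\goth f)$, settle the case $R_0=\mathbb{Z}$ or a field, then promote to arbitrary Noetherian $R_0$ via generic perfection --- matches the paper's. For parts (\ref{KL.a}) and (\ref{KL.b}) the paper does not reprove the Schubert result at all: it identifies $I_\lambda$ with the ideal $\Pf(X;\lambda;\lambda)$ of Kleppe--Laksov and cites their Theorem~12 (the Remark following the lemma gives the Schubert interpretation and the classical references of Hochster, Laksov, Musili, and Bruns--Vetter). For part (\ref{KL.c}), instead of invoking the Fano property, the paper cites Kleppe--Laksov Thm.~17 or Avramov's self-dual resolution of $R/\Pf_4(\mathbf X)$ over $\mathbb Z$; the transfer of the unit back Betti number via Theorem~\ref{PoPP} is the same move you make.

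However, the from-scratch induction you sketch for (\ref{KL.a}) and (\ref{KL.b}) does not work as stated. You claim $I_{\lambda+1}=(I_\lambda,x_{1,\lambda+1})$, driven by the sub-claim that some $x_{1,k}$ with $k>\lambda+1$ is a non-zero-divisor modulo $(I_\lambda,x_{1,\lambda+1})$. Both are false. Take $\goth f=4$ and $\lambda=2$, so that $I_2=(x_{1,2},\ x_{1,3}x_{2,4}-x_{1,4}x_{2,3})$ and $(I_2,x_{1,3})=(x_{1,2},x_{1,3},x_{1,4}x_{2,3})$. The only choice of $k$ is $k=4$, and $x_{1,4}$ is a zero-divisor modulo $(I_2,x_{1,3})$ because $x_{1,4}\cdot x_{2,3}\in(I_2,x_{1,3})$ while $x_{2,3}\notin(I_2,x_{1,3})$. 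Moreover $I_3=(x_{1,2},x_{1,3},x_{2,3})$ is strictly larger than $(I_2,x_{1,3})$; the latter is a mixed ideal with two minimal primes $(x_{1,2},x_{1,3},x_{2,3})$ and $(x_{1,2},x_{1,3},x_{1,4})$. The Pfaffian identity you cite only shows that $I_{\lambda+1}$ and $(I_\lambda,x_{1,\lambda+1})$ agree \emph{after inverting} $x_{1,k}$; it does not yield equality of the ideals, nor does the short exact sequence you propose have $R/I_{\lambda+1}$ as cokernel. To repair the induction one must work with $(I_\lambda,x_{1,\lambda+1})$ (which does have the right grade) and then argue that $I_{\lambda+1}$ is a component of it, using linkage or the standard monomial/straightening-law machinery of the cited references --- a substantially longer route than what you have written. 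Given that the result is available in the literature, the paper's choice to cite Kleppe--Laksov and focus its effort on the generic-perfection bootstrap is the cleaner path.
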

\begin{remark}\label{require} The ``in particular assertion'' in (\ref{KL.a}) would be false if $\goth f$ were equal to $3$; because, in this case, $I_1$, which is equal to $(0)$, has grade $0$, and $I_3$, which is equal to $(x_{1,2}\ ,\ x_{1,3}\ ,\ x_{2,3})$, has grade $3$. Of course,  the parameter $\lambda$, which is assumed to be at most $\goth f-1$, is not permitted to be $3$, when $\goth f=3$.
\end{remark}
\begin{proof}(\ref{KL.a},\ref{KL.b}) The ideal $I_\lambda$ is equal to the ideal $\Pf(X;\lambda;\lambda)$ of \cite{KL80}. The assertion follows from  \cite[Thm.~12]{KL80}. The statement of \cite[Thm.~12]{KL80} only considers the case where $R_0$ is a domain; however,  as soon as one knows that $I_\lambda$ is a perfect ideal when $R_0$ is equal to the ring of integers and when $R_0$ is equal to a field, then $I_\lambda$ built with $R_0=\mathbb Z$ is a generically perfect ideal and consequently $I_\lambda$ built over an arbitrary commutative Noetherian $R_0$ is a perfect ideal; see, for example \cite[Prop.~3.2 and Thm.~   3.3]{BV}. 

\medskip\noindent(\ref{KL.c}) A proof that $R/I$ is a Gorenstein ring whenever $R_0$ is Gorenstein is given in \cite[Thm.~17]{KL80}. A more explicit statement and proof of this result is given in \cite[Corollary]{A79}. In particular, when $R_0$ 
is equal to the ring of integers, then there exists a resolution $\mathbb F$ of $R/I$ by free $R$-modules which has the property that the length of $\mathbb F$ is $\binom{\goth f-2}2$ and the free module of $\mathbb F$ in position $\binom{\goth f-2}2$ has rank one. Now let $R_0$ be an arbitrary commutative Noetherian ring. We explained in the proof of (\ref{KL.a}) and (\ref{KL.b}) that $I$ is a perfect ideal in $R$. The ``Persistence of Perfection Principle'', Theorem~\ref{PoPP}, now guarantees that the back Betti number in a resolution of $R/I$ by free $R$-modules is one; and therefore, $I$ is a Gorenstein ideal in the sense of  \ref{perfection}.\ref{gor}.
\end{proof}
\begin{remark}An alternate phrasing of the proof of Lemma~\ref{KL}, parts (\ref{KL.a}) and (\ref{KL.b}), (but really the same argument in a different form) involves the Grassmannian $\operatorname{Gr}(2,\goth f)$ of rank $2$ free summands of the rank $\goth f$ free $R_0$-module $V$. The ideal $I$ is the ideal of ``quadratic relations'' which define the homogeneous coordinate ring of the image of the Pl\"ucker embedding of $\operatorname{Gr}(2,\goth f)$ into $\mathbb P(\bigwedge^2 V)$. The ideal $I_\lambda$ defines the homogeneous coordinate ring of the Pl\"ucker embedding of the  Schubert subvariety $\Omega(\goth f-\lambda,\goth f)$ of  $\operatorname{Gr}(2,\goth f)$. The  Schubert subvariety ${\Omega(\goth f-\lambda,\goth f)}$ consists of all $W$ in $\operatorname{Gr}(2,\goth f)$ such that $i\le \rank (W\cap V_i)$ for the flag $V_1\subsetneq V_2$ where $V_1$ is the the summand of $V$ with basis $e_{\lambda+1},\dots,e_\goth f$ and $V_2=V$. The original proofs that the homogeneous coordinate rings of the Schubert subvarieties of the Grassmannian are Cohen-Macaulay domains are \cite[Thm.~3.1$^*$, (3.10), Cor.~4.2]{H73}, \cite[Thm.~1]{L72}, and \cite[Thm.~II.4.1 and Thm.~III.4.1]{M72}. A version which contains many details is \cite[Thm.~1.4, the bottom of page 52, Cor.~5.18, Thm.~6.3]{BV}.
\end{remark}

One consequence of Lemma~\ref{KL} is that $I$ is grade unmixed. This fact facilitates the identification of regular elements in $A$. Corollary~\ref{KL-consq} and its style of proof are used in the proof of Corollary~\ref{corollary}.\ref{cor-a}.

\begin{corollary}\label{KL-consq} Adopt the language of {\rm \ref{data2}.\ref{data2-one}}, {\rm\ref{Not2}.\ref{Not2.a}}, and {\rm \ref{R2}.\ref{R2.z}}. If the base ring $R_0$ is an arbitrary commutative Noetherian ring, then $x_{1,2}\ ,\ x_{1,3}$ is a regular sequence on $A$.
\end{corollary}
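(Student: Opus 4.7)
The approach is to use the fact that a perfect ideal is grade-unmixed (as noted for $I$ in the paragraph immediately above the statement) together with the grade jumps supplied by Lemma~\ref{KL}. Write $g = \binom{\goth f-2}{2}$. By Lemma~\ref{KL}.\ref{KL.a}, the ideals $I_1=I$, $I_2=I+(x_{1,2})$, and $I_3=I+(x_{1,2},x_{1,3},x_{2,3})$ are perfect of grades $g$, $g+1$, and $g+2$, respectively, and the same grade-unmixed argument applied to each gives: every $P\in\Ass(R/I_\lambda)$ has $\grade P=\grade I_\lambda$. This reduces the corollary to showing that $x_{1,2}$ lies in no element of $\Ass(R/I)$ and $x_{1,3}$ lies in no element of $\Ass(R/I_2)$.

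The first step is immediate: if $x_{1,2}\in P\in\Ass(R/I)$, then $P\supseteq I_2$, forcing $\grade P\ge g+1 > g$, a contradiction. For the second step, suppose $x_{1,3}\in P\in\Ass(R/I_2)$, so $\grade P=g+1$ and $P\supseteq I+(x_{1,2},x_{1,3})$. I would split on whether $x_{2,3}\in P$. If $x_{2,3}\in P$, then $P\supseteq I_3$ and $\grade P\ge g+2$, a contradiction. If $x_{2,3}\notin P$, I would exploit, for each $k$ with $4\le k\le\goth f$, the principal Pfaffian
\[
\Pf(1,2,3,k)=x_{1,2}x_{3,k}-x_{1,3}x_{2,k}+x_{1,k}x_{2,3}\in I\subseteq P;
\]
since $x_{1,2},x_{1,3}\in P$, this yields $x_{1,k}x_{2,3}\in P$, and primality together with $x_{2,3}\notin P$ forces $x_{1,k}\in P$. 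Hence $P$ contains
\[
J^{*}=(x_{1,2},x_{1,3},\dots,x_{1,\goth f})+\Pf_4(\mathbf{X}^{(1)})R,
\]
where $\mathbf{X}^{(1)}$ denotes the $(\goth f-1)\times(\goth f-1)$ alternating submatrix of $\mathbf{X}$ obtained by deleting row and column $1$. The inclusion $I\subseteq J^{*}$ is immediate, because every $\Pf(1,j,k,l)$ lies in $(x_{1,\ast})$ while every $\Pf(i,j,k,l)$ with $i\ge 2$ already appears among the generators of $\Pf_4(\mathbf{X}^{(1)})$.

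To close, I would compute $\grade J^{*}$: the indeterminates $x_{1,2},\dots,x_{1,\goth f}$ form an $R$-regular sequence whose quotient is the polynomial subring $R^{(1)}=R_0[x_{i,j}:2\le i<j\le\goth f]$, in which $\Pf_4(\mathbf{X}^{(1)})$ is perfect of grade $\binom{\goth f-3}{2}$ by Lemma~\ref{KL}.\ref{KL.a} applied with $\goth f-1$ in place of $\goth f$. Concatenating the two regular sequences gives $\grade J^{*}\ge(\goth f-1)+\binom{\goth f-3}{2}=g+2$, whence $\grade P\ge g+2$, contradicting $\grade P=g+1$. The hard part of the argument is the case $x_{2,3}\notin P$: one must spot that the Pfaffian identity $\Pf(1,2,3,k)=0$ in $R/I$ transfers $x_{1,k}$ into $P$ for every $k\ge 4$, thereby enlarging the information available about $P$ enough to invoke Lemma~\ref{KL} on the smaller generic alternating matrix; the remaining grade bookkeeping is routine.
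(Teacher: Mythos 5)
Your proof is correct and takes essentially the same route as the paper: both establish regularity of $x_{1,2}$ via the grade jump from $I_1$ to $I_2$, then handle $x_{1,3}$ by expanding the Pfaffian $\Pf(1,2,3,j)$ to show that any prime $P$ containing $I+(x_{1,2},x_{1,3})$ also contains either $I_3$ or $(I,x_{1,2},\dots,x_{1,\goth f})$, and finish with the same grade computation $\binom{\goth f-3}{2}+(\goth f-1)=\binom{\goth f-2}{2}+2$. The only cosmetic difference is that you phrase the argument in terms of associated primes of $R/I_2$ while the paper shows the grade bound for every prime containing $(I,x_{1,2},x_{1,3})$; these are equivalent here.
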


\begin{proof}
Every associated prime $P$ of $R/I$ in $R$ has grade $PR_P=\binom{\goth f-2}2$. 
Lemma~\ref{KL}.\ref{KL.a} assures that
$I_2$, which equals $(I\ ,\ x_{1,2})$, is a perfect ideal of grade $\binom{\goth f-2}2+1$ in $R$; hence $x_{1,2}$ is not in any associated prime of $R/I$ (that is, $x_{1,2}$ is regular on $R/I$)  and every associated prime $P$ of $R/(I\ ,\ x_{1,2})$ in $R$ has grade $PR_P=\binom{\goth f-2}2+1$. We prove that $x_{1,3}$ is regular on $R/(I\ ,\ x_{1,2})$ by showing that $\textstyle\binom{\goth f-2}2+2\le \grade PR_P$ for all primes $P$ of $R$ which contain $(I\ ,\ x_{1,2}\ ,\ x_{1,3})$. Let $P$ be such a prime. Consider the Pfaffian
$$
x_{1,2}x_{3,j}-x_{1,3}x_{2,j}+x_{1,j}x_{2,3}\in I\subseteq P.$$Thus, $x_{2,3}x_{1,j}$ is in $P$ for $3\le j\le \goth f$. It follows that  either $I_3$, which is  $(I\ ,\  x_{1,2}\ ,\ x_{1,3}\ ,\ x_{2,3})$, is contained in $P$ or $(I\ ,\ x_{1,2}\ ,\ x_{1,3}\ ,\ \dots\ ,\ x_{1,\goth f})\subseteq P$. Lemma~\ref{KL}.\ref{KL.a} ensures that $I_3$ has grade ${\binom{\goth f-2}2+2}$. The ideal $(I\ ,\ x_{1,2}\ ,\ x_{1,3}\ ,\ \dots\ ,\ x_{1,\goth f})$ is equal to $\Pf_4(\mathbf X')$ plus an ideal generated by $\goth f-1$ indeterminates, where $\mathbf X'$ is $\mathbf X$ with row and column $1$ deleted. Thus $(I\ ,\ x_{1,2}\ ,\ x_{1,3}\ ,\ \dots\ ,\ x_{1,\goth f})$ has grade 
$$\textstyle \binom{\goth f-3}2+\goth f-1=\binom{\goth f-2}2+ 2.$$ In either event, $\binom {\goth f-2}2+2\le \grade P$ and the proof is complete.
\end{proof}

\begin{chunk}\label{prove-3.2} {\bf Proof of Lemma \ref{exact-seq}.}
We prove that 
\begin{equation}\label{key}\textstyle \bigwedge^3\overline{F}^*\xrightarrow{\ \ \overline{d_1}\ \ }\overline{F}^*
\xrightarrow{\ \ d_0'\ \ }
A^3\xrightarrow{\ \ \rho\ \ }A\to A'\to 0 \end{equation}is an exact sequence of $A$-modules, where
$d_1:\bigwedge^3F^*\to F^*$ is $d_1(\phi_3)=\xi(\phi_3)$, as given in 
Notation~\ref{Not2}.\ref{Not2.a.iii} and Remark~\ref{promise}, 
$d_0'$ is the composition
 $$\textstyle \overline{F}^*\xrightarrow{\ \ \overline{d_0}\ \ }\overline{F}=\bigoplus\limits_{i=1}^\goth f Ae_i
\xrightarrow{\ \ \text{projection}\ \ }\bigoplus\limits_{i=1}^3 Ae_i, 
$$  where $d_0:F^*\to F$ is
$d_0(\phi_1)=\phi_1(\xi)$ as described in Remark~\ref{R2}.\ref{R2.e} and Remark~\ref{promise}, and $\rho$
is given by the matrix \begin{equation}\label{delta1}\rho=\bmatrix x_{2,3}&-x_{1,3}&x_{1,2}\endbmatrix.\end{equation} (The basis $e_1,\dots,e_\goth f$ for $F$ is introduced in Remark~\ref{R2}.)
Once we show that (\ref{key}) is an exact sequence, then the proof is complete. Indeed,  
\begin{align}&N=\coker \overline{d_1}\cong \im d_0'=\ker \rho;
\quad\text{hence,}\label{3.10.3}\\
&0\to N\to
A^3\xrightarrow{\ \ \rho\ \ }A\to A'\to 0\notag\end{align} is exact, as claimed in (\ref{exact-seq.1}).

We first show that (\ref{key}) is a complex. To show that $d_0'\circ \overline{d_1}=0$ it suffices to show that the image of $d_0\circ d_1$ is contained in $I\cdot F$ and this was done in Remark~\ref{promise}.   The matrix for $\rho$ is given in (\ref{delta1}) and the matrix 
\begin{equation}\label{d0'}d_0'=-\bmatrix 0&x_{1,2}&x_{1,3}&x_{1,4}&\dots&x_{1,\goth f}\\ 
-x_{1,2}&0&x_{2,3}&x_{2,4}&\dots&x_{2,\goth f}\\
-x_{1,3}&-x_{2,3}&0&x_{3,4}&\dots&x_{3,\goth f}\endbmatrix
\end{equation} for 
$d_0'$ 
may be read 
from the discussion in Remark~\ref{promise}. 
It is clear that $\rho\circ d_0'=0$ and that the complex (\ref{exact-seq.1}) is exact at $A$ and $A'$. We next show that (\ref{exact-seq.1}) is exact at $A^3$. 
Suppose 
$$\alpha=\bmatrix a_1&a_2&a_3\endbmatrix^{\rm T}$$ is an element of $A^3$ with
$\rho(\alpha)=0$ in $A$. (We use $M^{\rm T}$ to represent the transpose of the matrix $M$.)
In other words, $x_{2,3}a_1-x_{1,3}a_2+x_{1,2}a_3=0$ in $A$. In particular,
$$x_{2,3}a_1\in (x_{1,2}\ ,\ x_{1,3})A\subseteq (x_{1,2}\ ,\ x_{1,3}\ ,\ \dots\ ,\ x_{1,\goth f})A.$$ The ideal $(x_{1,2}\ ,\ x_{1,3}\ ,\ \dots\ ,\ x_{1,\goth f})A$ of $A$ is prime; indeed,
$$(x_{1,2}\ ,\ x_{1,3}\ ,\ \dots\ ,\ x_{1,\goth f})+I=(x_{1,2}\ ,\ x_{1,3}\ ,\dots\ ,\ x_{1,\goth f})+\Pf_4(\mathbf X'),$$ where ${\mathbf X}'$ is the matrix $\mathbf X$ of Remark~\ref{R2}.\ref{R2.e} with row one and column one deleted. The matrix $\mathbf X'$ is a generic alternating matrix which does not involve the variables $x_{1,2}\ ,\ \dots\ ,\ x_{1,\goth f}$; so \cite[Thm.~12]{KL80} guarantees that $\Pf_4(\mathbf X')$ is prime; see, for example Lemma~\ref{KL}. 

The product $x_{2,3}a_1$ is in the prime ideal $(x_{1,2}\ ,\ \dots\ ,\ x_{1,\goth f})A$ and $x_{2,3}\notin(x_{1,2}\ ,\ \dots\ ,\ x_{1,\goth f})A$; thus, $a_1\in (x_{1,2}\ ,\ \dots\ ,\ x_{1,\goth f})A$ and a quick glance at (\ref{d0'}) shows that there is an element $\overline{\phi_1}$ in $\overline{F}$ such that 
$$\alpha -d_0'(\overline{\phi_1})=\bmatrix 0&a_2'&a_3'\endbmatrix^{\rm T},$$  
for  some $a_2'$ and $a_3'$ in $A$.  The equation $-x_{1,3}a_2'+x_{1,2}a_3'=0$ in $A$ shows that $x_{1,3}a_2'$ is an element of the prime ideal 
$(x_{1,2})A=I_2A$; see Lemma~\ref{KL}. Hence, $a_2'$ is in $(x_{1,2})A$ and 
a further modification $\alpha -d_0'(\overline{\phi_1})$ by a boundary which only involves the first column of $d_0'$ yields an element of the kernel of $\rho$ of the form $\bmatrix 0&0&a_3''\endbmatrix^{\rm T}$. The element $a_3''$  is zero because $A$ is a domain; and therefore, $\alpha\in \im d_0'$.

The argument that (\ref{exact-seq.1}) is exact at $\overline{F}^*$ is very similar to the preceding argument. Suppose $\alpha=[a_1\ ,\ \dots\ ,\ a_{\goth f}]^{\rm T}$ is an element of $\ker d_0'$. The third row of the equation $d_0'\alpha=0$ yields that $x_{3,\goth f}a_f$ is an element of the prime ideal $I_{\goth f-1}A$, in the language of Definition~\ref{I-KL} and Lemma~\ref{KL}; but $x_{3,\goth f}\notin I_{\goth f-1}A$; so $a_{\goth f}\in I_{\goth f-1}$. On the other hand, for each $x_{i,j}\in I_{\goth f-1}$, $$\overline{d_1}(e_\goth f^*\wedge e_j^*\wedge e_i^*)=x_{j,\goth f}e_i^*-x_{i,\goth f}e_j^*+x_{i,j}e_\goth f^*;$$ hence  there is an element $\overline{\phi_3}\in \bigwedge^3\overline{F}^*$ so that $$\alpha -\overline {d_1}(\overline{\phi_3})=[a_1'\ ,\ \dots\ ,\ a_{\goth f-1}'\ ,\ 0]^{\rm T}.$$ The third row of the equation $d_0'(\alpha -\overline {d_1}(\overline{\phi_3}))=0$ yields that $x_{3,\goth f-1}a_{\goth f-1}'\in I_{\goth f-2}A$.
Use elements of the form $\overline{d_1}(e_{\goth f-1}^*\wedge e_j^*\wedge e_i^*)$ to remove $a_{\goth f-1}'$ (while keeping $0$ in the bottom position).
Continue in this manner to find $\overline{\phi_3}^{\,\dagger}\in \bigwedge^3\overline{F}^*$ so that $$\alpha -\overline {d_1}(\overline{\phi_3}^{\,\dagger})=[a_1^\dagger\ ,\ a_2^\dagger\ ,\ a_3^\dagger\ ,\ 0\ ,\ \dots,0]^{\rm T}.$$The second equation of 
$$\bmatrix 0&x_{1,2}&x_{1,3}\\-x_{1,2}&0&x_{2,3}\\-x_{1,3}&-x_{2,3}&0\endbmatrix \bmatrix a_1^\dagger\\[3pt]a_2^\dagger\\[3pt]a_3^\dagger\endbmatrix =d_0'(\alpha -\overline {d_1}(\overline{\phi_3}^\dagger)=0$$ yields $a_3^\dagger\in (x_{1,2})A$; hence there exists $\overline{\phi_3}^{\,\ddag}\in \bigwedge^3\overline{F}^*$, so that $$\alpha -\overline {d_1}(\overline{\phi_3}^{\,\ddag})=[a_1^\ddag\ ,\ a_2^\ddag\ ,\ 0\ ,\ \dots\ ,\ 0]^{\rm T}.$$Now one sees that $x_{1,2}a_1^\ddag=x_{1,2}a_2^\ddag=0$ in the domain $A$; hence $a_1^\ddag=a_2^\ddag=0$, $\alpha$ is a boundary, (\ref{exact-seq.1}) is exact.

The final assertion, that $N$ has rank two as an $A$-module, is an immediate consequence of the exactness of (\ref{exact-seq.1}). Indeed, $A$ is a domain (see \cite[Thm.~12]{KL80} or Lemma~\ref{KL}) and $A'_{(0)}=0$. 
\qed
\end{chunk}

\begin{chunk}\label{prove-dream}{\bf{Proof of Lemma \ref{main-Dream-Lemma}.}} The module $N$, built over an arbitrary ring $R_0$, is obtained from the module $N$, built over the ring of integers $\mathbb Z$, by way of the base change $R_0\otimes_{\mathbb Z}-$. According to the theory of generic perfection (see, for example \cite[Prop.~3.2 and Thm.~3.3]{BV}) in order to prove that $N$, built over an arbitrary ring $R_0$, is a perfect $R$-module,  it suffices to prove that $N$ is a  perfect $R$-module 
when $R_0=\mathbb Z$ and when $R_0$ is a field. Fix one of these choices for $R_0$ and consider the exact sequence of Lemma~\ref{exact-seq}.

It was observed in Example~\ref{3.4} that $A=R/I_1$ and $A'=R/I_3$; consequently, Lemma~\ref{KL}.\ref{KL.a} guarantees that $A$ and $A'$ are perfect $R$-modules and  $\pd_R A'=\pd_R A+2$. (This is where the hypothesis $4\le \goth f$ is required; see Remark~\ref{require}.)
Let $P$ be a prime ideal of $R$ which is in the support of $N$. Lemma~\ref{exact-seq} shows that the module $N$ embeds into a free $A$-module; hence, $P$ is in the support of $A$ and $A_P$ is a Cohen-Macaulay ring.  The localization  $A'_P$ is either zero or 
a Cohen-Macaulay ring with $\dim A'_P=\dim A_P-2$. In either event, 
 we apply the usual argument about the growth of depth in an exact sequence 
(see, for example, \cite[Prop.~1.2.9]{BH}),  to
 the localization of the exact sequence (\ref{exact-seq.1}) at $P$ in order to conclude  that  $\depth A_P\le \depth N_P$. At this point the inequalities
\begin{equation}\label{3.11.0}\depth N_P\le \dim N_P\le_* \dim A_P=\depth A_P\le \depth N_P\end{equation} all hold; consequently, equality holds throughout. (The inequality labeled * holds because $N_P$ is an $A_P$-module.) Thus, $N_P$ is a Cohen-Macaulay $R_P$-module and 
\begin{equation}\label{just-above}\textstyle\pd_{R_P}N_P=\pd_{R_P}A_P=\pd_RA=\binom {\goth f-2}2.\end{equation}
(The first equality is a consequence of the Auslander-Buchsbaum theorem; the second equality  is explained in \ref{perfection}.\ref{constant-pd}; and the third equality is a consequence of Lemma~\ref{KL}.) Thus, $N$ is a perfect $R$-module of projective dimension $\binom{\goth f-2}2$ (see \ref{perfection}.\ref{perfection.d}, if necessary) and the proof is complete. 
\qed\end{chunk}

Section~\ref{main} is concerned with the ring $\mathcal R$ of Data~{\rm\ref{data2}} and   Notation~{\rm\ref{Not2}}. The ring $\mathcal R$ is a polynomial ring over $R$ and the $\mathcal R$-modules $\mathcal N=\mathcal R\otimes_RN$, 
$\mathcal A=\mathcal R\otimes_RA$, and $\mathcal F=\mathcal R\otimes_RF$ are obtained from the corresponding $R$-modules by way of a base change. It is convenient to record the results of the present section in the language of the future section. 
\begin{corollary}\label{carry forward}
Adopt Data~{\rm\ref{data2}} and   Notation~{\rm\ref{Not2}} with $4\le \goth f$.
If the base ring $R_0$ is an arbitrary commutative Noetherian ring, then the $\mathcal R$-modules $\mathcal N$ and $\mathcal A$ are perfect of projective dimension $\binom {\goth f-2}2${$;$} furthermore $I\mathcal R$ is a Gorenstein ideal. 
\end{corollary}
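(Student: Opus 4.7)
The ring $\mathcal{R}$ is isomorphic to the polynomial ring $R[t_1,\ldots,t_\goth f]$, so the inclusion $R\hookrightarrow\mathcal{R}$ is free (hence faithfully flat). My plan is to apply the Persistence of Perfection Principle (Theorem \ref{PoPP}) with $S=\mathcal{R}$ to each of the $R$-modules $N$ and $A=R/I$, transferring the results of Section \ref{dream} to the larger ring.

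For any non-zero finitely generated $R$-module $M$, faithful flatness gives $\mathcal{R}\otimes_R M\neq 0$ as well as $\ann_{\mathcal{R}}(\mathcal{R}\otimes_R M)=(\ann_R M)\mathcal{R}$; and passage to a polynomial extension preserves the grade of an ideal, so
$$\grade_R(\ann_R M)=\grade_{\mathcal{R}}(\ann_{\mathcal{R}}(\mathcal{R}\otimes_R M)).$$
Lemma \ref{main-Dream-Lemma} says that $N$ is a perfect $R$-module of projective dimension $\binom{\goth f-2}{2}$, and Lemma \ref{KL}.\ref{KL.a} (with $\lambda=1$) says the same for $A=R/I_1$. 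Applying Theorem \ref{PoPP} in each instance yields that $\mathcal{N}=\mathcal{R}\otimes_R N$ and $\mathcal{A}=\mathcal{R}\otimes_R A=\mathcal{R}/I\mathcal{R}$ are perfect $\mathcal{R}$-modules of projective dimension $\binom{\goth f-2}{2}$.

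For the Gorenstein assertion, Lemma \ref{KL}.\ref{KL.c} supplies a free $R$-resolution $\mathbb{F}$ of $A$ of length $\binom{\goth f-2}{2}$ whose last term has rank one. Because $\mathcal{R}$ is $R$-flat, $\mathcal{R}\otimes_R\mathbb{F}$ is a free $\mathcal{R}$-resolution of $\mathcal{A}$ of the same length with last term of rank one; together with the perfection of $I\mathcal{R}$ just established, this shows that $\Ext^g_{\mathcal{R}}(\mathcal{R}/I\mathcal{R},\mathcal{R})$ is cyclic for $g=\binom{\goth f-2}{2}$, and hence $I\mathcal{R}$ is a Gorenstein ideal in the sense of \ref{perfection}.\ref{gor}.

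There is no substantial obstacle in this argument: every step is either a direct citation of a result from the preceding section or a routine property of a polynomial extension. The only point that genuinely requires attention is checking that $\grade_R(\ann_R M)$ does not drop when passing from $R$ to $\mathcal{R}$, which is standard for a free base change and lets Theorem \ref{PoPP} be applied in a single stroke.
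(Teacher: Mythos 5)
Your proof is correct and follows essentially the same route as the paper, whose entire proof reads ``Apply Lemmas~\ref{main-Dream-Lemma} and \ref{KL}.'' You have simply spelled out the details that the paper leaves implicit: that $R\hookrightarrow\mathcal R$ is a polynomial (hence faithfully flat) extension, that this makes the hypotheses of Theorem~\ref{PoPP} automatic for $N$ and $A$, and that flat base change carries the rank-one back Betti number of $\mathbb F$ over to $\mathcal R\otimes_R\mathbb F$, giving the Gorenstein conclusion.
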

\begin{proof} Apply Lemmas~\ref{main-Dream-Lemma} and \ref{KL}. \end{proof}

We close this section by redeeming 
assorted 
promises. Assertion~(\ref{yet.a}) was promised in 
Remark~\ref{promise}.
Assertion~(\ref{yet.b}) was promised in the introduction when we claimed that Section~\ref{dream} is about the image of $\overline{d_0}$; however, until this point, it appears that Section~\ref{dream} is about $N$, which is the cokernel of $\overline{d_1}$.
The homological properties of $N$, which are listed in (\ref{yet.c}) and (\ref{yet.d}), were also promised in the introduction.
\begin{observation}\label{yet-to-come}
Adopt the language of {\rm \ref{data2}.\ref{data2-one}}, {\rm\ref{Not2}.\ref{Not2.a}}, and {\rm \ref{R2}.\ref{R2.z}}. Assume that $R_0$ is a domain.
\begin{enumerate}[\rm(a)]
\item\label{yet.a} The complex $A\otimes_R\text{\rm(\ref{pre-cplx})}$ is exact. \item\label{yet.b} The module $N$ {\rm (}of Lemma {\rm\ref{main-Dream-Lemma}} and elsewhere{\rm)} is isomorphic to the module of {\rm (\ref{column})}. 
\item \label{yet.c} The $A$-module $N$ is self-dual.
\item \label{yet.d} If $R_0$ is a Cohen-Macaulay domain, then   
$N$ is a self-dual maximal Cohen-Macaulay $A$-module of rank two.
\item \label{yet.e} 
If $R_0$ is a Gorenstein domain, and 
$$\mathbb X:\quad \cdots \xrightarrow{d_4}\mathbb X_3\xrightarrow{d_3}\mathbb X_2\xrightarrow{d_2} 
\textstyle \bigwedge^3\overline{F}^*\xrightarrow{\overline{d_1}} \overline{F}^*
$$
is a resolution of $N$ by free $A$-modules, then 
$$\textstyle \mathbb Y:\quad \cdots  \xrightarrow{d_4}\mathbb X_3\xrightarrow{d_3}\mathbb X_2\xrightarrow{d_2}\bigwedge^3\overline{F}^*\xrightarrow{\overline{d_1}} \overline{F}^*\xrightarrow{\overline{d_0}}\overline{F} \xrightarrow{\overline{\delta_1}}\bigwedge^3\overline{F}\xrightarrow{d_2^*}\mathbb X_2^*\xrightarrow{d_3^*}\mathbb X_3^*\xrightarrow{d_4^*}\cdots$$
is a self-dual totally acyclic complex.  {\rm(}In other words, $\HH_\bullet(\mathbb Y)=\HH_{\bullet}(\mathbb Y^*)=0$ and, after making the appropriate shift, $\mathbb Y^*$ is isomorphic to $\mathbb Y$.{\rm)}
\end{enumerate}\end{observation}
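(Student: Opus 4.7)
My plan is to extract (b) and exactness at $\overline{F}^*$ in (a) from the proof of Lemma~\ref{exact-seq}; then to establish exactness at $\overline{F}$ in (a) by a resolve-and-dualize argument combined with a base-change reduction to $R_0=\mathbb{Z}$; and then to derive (c)--(e) as formal consequences.

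For (b) and exactness at $\overline{F}^*$: the proof of Lemma~\ref{exact-seq} already produced the exact sequence $\bigwedge^3\overline{F}^*\xrightarrow{\overline{d_1}}\overline{F}^*\xrightarrow{d_0'}A^3\xrightarrow{\rho}A$, in which $d_0'$ is $\overline{d_0}$ followed by projection onto the first three coordinates.  Then $\im\overline{d_1}\subseteq\ker\overline{d_0}\subseteq\ker d_0'=\im\overline{d_1}$ (first inclusion from the complex property, second from the factorization of $d_0'$), so all three coincide, $\overline{d_0}$ factors as $\overline{F}^*\twoheadrightarrow N\hookrightarrow\overline{F}$, and $N\cong\im\overline{d_0}=\{\mathbf{X}\theta:\theta\in A^{\goth f}\}$.

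For exactness at $\overline{F}$: generic perfection (Theorem~\ref{PoPP}) together with the fact that $A_{\mathbb Z}$ is a domain of characteristic zero (hence $\mathbb Z$-flat) reduces the problem to $R_0=\mathbb{Z}$, where $A$ is Gorenstein by Lemma~\ref{KL}.(\ref{KL.c}).  There, $N$ is a maximal Cohen--Macaulay $A$-module (by Lemma~\ref{main-Dream-Lemma} and Auslander--Buchsbaum), so $\Ext^i_A(N,A)=0$ for $i>0$; hence if $\mathbb X$ is any free $A$-resolution of $N$ extending $\overline{d_1}$, then $\Hom_A(\mathbb X,A)$ is a free resolution of $N^*$.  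The identification $\overline{\delta_1}=(\overline{d_1})^*$ (from Remark~\ref{promise}, since the matrix of $\delta_1$ is the transpose of the matrix of $d_1$) shows that this dualized resolution begins $0\to N^*\to\overline{F}\xrightarrow{\overline{\delta_1}}\bigwedge^3\overline{F}$, so $N^*=\ker\overline{\delta_1}$ as a submodule of $\overline{F}$.  The alternating pairing on $N$ arising from $(\overline{d_0})^*=-\overline{d_0}$ then identifies the two submodules $\im\overline{d_0}$ and $\ker\overline{\delta_1}$ of $\overline{F}$, completing (a).

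The remaining parts fall out. Part (c) is now automatic: $N\cong\im\overline{d_0}=\ker\overline{\delta_1}\cong N^*$.  Part (d) combines (c), Lemma~\ref{main-Dream-Lemma}, the Auslander--Buchsbaum formula (giving that $N$ is MCM when $R_0$ is Cohen--Macaulay), and the generic rank computation $N_{(0)}\cong A_{(0)}^2$ from Lemma~\ref{exact-seq}.  Part (e), valid when $R_0$ is a Gorenstein domain so that $A$ itself is Gorenstein and the Ext-vanishing used above holds over $A$ directly, is obtained by splicing $\mathbb X$, the middle map $\overline{d_0}$, and $\Hom_A(\mathbb X,A)$ along the identifications $\overline{\delta_1}=(\overline{d_1})^*$ and $(\overline{d_0})^*=-\overline{d_0}$; acyclicity of $\mathbb Y$ at each position follows from exactness of $\mathbb X$ (first half), (a) (middle), and exactness of $\Hom_A(\mathbb X,A)$ (second half), while self-duality $\mathbb Y^*\cong\mathbb Y$ is built into the construction.

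The main obstacle will be the identification $\im\overline{d_0}=\ker\overline{\delta_1}$ as submodules of $\overline{F}$, not merely as abstractly isomorphic $A$-modules.  The containment $\im\overline{d_0}\subseteq\ker\overline{\delta_1}$ is formal, both have generic rank~$2$, and the self-dual pairing on $N$ should force equality; but the precise verification requires chasing the commutative square obtained by dualizing $\overline{F}^*\twoheadrightarrow N\hookrightarrow\overline{F}$ and tracking signs carefully under the self-duality $(\overline{d_0})^*=-\overline{d_0}$.
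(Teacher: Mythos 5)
Your argument for part~(b) and for exactness of $A\otimes_R(\ref{pre-cplx})$ at $\overline{F}^*$ matches the paper's, and parts~(c), (d), (e) are handled by the same splicing/Ext-vanishing ideas the paper uses. The genuine difficulty is exactness at $\overline{F}$ in part~(a), and there your ``resolve-and-dualize'' route has a gap that is not merely a matter of ``tracking signs.'' Dualizing $\bigwedge^3\overline{F}^*\xrightarrow{\overline{d_1}}\overline{F}^*\twoheadrightarrow N$ correctly identifies $\ker\overline{\delta_1}$ with $N^*$ (as a submodule of $\overline{F}$, via the injection $\pi^*$), and you do get a natural skew map $\psi\colon N\to N^*$ by factoring $\iota^*\colon\overline{F}^*\to N^*$ through $\pi\colon\overline{F}^*\twoheadrightarrow N$; the relation $\overline{d_0}^*=-\overline{d_0}$ shows this factorization exists and that $\psi$ is injective. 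But the statement you need --- that $\im\overline{d_0}=\ker\overline{\delta_1}$ inside $\overline{F}$ --- is \emph{equivalent} to the surjectivity of $\psi$, which in turn is equivalent to surjectivity of $\iota^*\colon\overline{F}^*\to N^*$, i.e.\ to $\Ext^1_A(\coker\overline{d_0},A)=0$. You have no argument for this vanishing; being rank~$2$, maximal Cohen--Macaulay and linked by an injective skew map does not force an isomorphism (e.g.\ $0\to N\to N^*\to Q\to 0$ with $Q$ Cohen--Macaulay of codimension one is perfectly consistent with all your hypotheses). So the ``self-dual pairing forces equality'' step is circular, not a computation to be carried out carefully.

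A second, smaller issue: your base-change reduction to $R_0=\mathbb Z$ is not clean. The modules in the truncated complex $\bigwedge^3\overline{F}^*\to\overline{F}^*\to\overline{F}\to\bigwedge^3\overline{F}$ are $\mathbb Z$-flat, but the complex is truncated and has nonzero homology at the right end ($\coker\overline{\delta_1}$), so the universal coefficient theorem produces a $\Tor^{\mathbb Z}_1(R_0,\coker\overline{\delta_1,\mathbb Z})$ term contaminating the homology at $\overline{F}$ after base change; you would need to control the $\mathbb Z$-torsion of $\coker\overline{\delta_1,\mathbb Z}$, which you do not address. Note that the hypothesis in (a) is only that $R_0$ is a domain, so positive-characteristic fields are in play. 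The paper sidesteps both of these obstacles by proving exactness at $\overline{F}$ through a direct element chase over an arbitrary domain $R_0$ --- take $f_1=\sum a_ie_i\in\ker\overline{\delta_1}$, read off coefficients of $e_1\wedge e_i\wedge e_j$, use that $(x_{1,2},\dots,x_{1,\goth f})A$ is a prime ideal (a smaller Pfaffian ideal is prime) to peel off $a_1$, then $a_2$, then the rest --- exactly parallel to the proof of Lemma~\ref{exact-seq}. Self-duality (c) is then a one-line \emph{consequence} of (a), rather than an ingredient used to prove it, which is what makes your ordering of the logic problematic.
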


\begin{proof} (\ref{yet.a}) We are supposed to prove that the complex \begin{equation}\label{name-me}\textstyle \bigwedge^3\overline{F}^*\xrightarrow{\overline{d_1}} \overline{F}^*\xrightarrow{\overline{d_0}}\overline{F} \xrightarrow{\overline{\delta_1}}\bigwedge^3\overline{F}\end{equation} is exact. (Recall from \ref{Not2}.\ref{overline} that $\overline{\phantom{x}}$ is the functor $A\otimes_R-$.) We showed in (\ref{key}) that $$\textstyle \bigwedge^3\overline{F}^*\xrightarrow{\overline{d_1}} \overline{F}^*\xrightarrow{\operatorname{projection}\circ\overline{d_0}}A^3$$is exact. It follows that
$$\im \overline{d_1}\subseteq \ker \overline{d_0}\subseteq \ker(\operatorname{projection}\circ\overline{d_0})= \im \overline{d_1}$$ and (\ref{name-me}) is exact at $\overline{F}^*$. 

We now prove that (\ref{name-me}) is exact at $\overline{F}$. Let $f_1=\sum_{i=1}^\goth f a_ie_i$ be in $\ker \overline{\delta_1}$, with $a_i\in A_i$ and $e_1,\dots, e_n$ a basis for $\overline{F}$. Use the  coefficient of $e_1\wedge e_i\wedge e_j$ in $0=\overline{\delta_1}(f_1)$ in order to see that
$$x_{i,j}a_1\in(x_{1,i}\ ,\ x_{1,j})\subseteq (x_{1,2}\ ,\ x_{1,3}\ ,\ \dots\ ,\ x_{1,\goth f})$$
for all $i$ and  $j$ with $2\le i<j\le \goth f$. The ideal $(x_{1,2}\ ,\ x_{1,3}\ ,\ \dots\ ,\ x_{1,\goth f})$ of $A$ is prime (indeed, $A/(x_{1,2}\ ,\ x_{1,3}\ ,\ \dots\ ,\ x_{1,\goth f})$ is the domain defined by ``$\Pf_4$'' of a smaller generic matrix) and $x_{i,j}$ is not in  $(x_{1,2}\ ,\ x_{1,3}\ ,\ \dots\ ,\ x_{1,\goth f})$. Therefore, $a_1\in (x_{1,2}\ ,\ x_{1,3}\ ,\ \dots\ ,\ x_{1,\goth f})$ and there is an element $\phi_1\in \overline{F}^*$ with $f_1^\dagger=f_1-\overline{d_0}(\phi_1)=\sum_{i=2}^\goth f a_i^\dagger e_i$. (Recall that $-\mathbf X$ is the matrix for $d_0$.) The coefficient of $e_1\wedge e_2\wedge e_3$ in $0=\overline{\delta_1}(f_1^\dagger)$ shows $a_2^\dagger x_{1,3}$ is in the prime ideal $(x_{1,2})$; hence, $a_2^\dagger\in (x_{1,2})$ and one may use the first column of $\mathbf X$ to remove $a_2^\dagger$ without damaging $a_1^\dagger=0$. In other words, there exists $\phi_1^\ddag\in \overline{F}^*$ with $f_1^\ddag=f_1-d_0(\phi_1^\ddag)=\sum_{i=3}^\goth f a_ie_i$. The coefficient of $e_1\wedge e_2\wedge e_j$ in $0=\overline{\delta_1}(f_1^\ddag)$ shows that $x_{1,2}a_j^\ddag=0$ for $3\le j\le \goth f$. Hence,  $a_j^\ddag=0$ for $3\le j\le \goth f$,  $f_1$ is a boundary in (\ref{name-me}), and (\ref{name-me}) is exact.

\medskip(\ref{yet.b}) Apply (\ref{yet.a}) to see that $N=\coker \overline{d_1}\cong \im \overline{d_0}={\rm (\ref{column})}$. 

\medskip(\ref{yet.c}) The definition $N=\coker \overline{d_1}$ guarantees that 
$\textstyle\bigwedge^3\overline{F}^*\xrightarrow{\overline{d_1}} \overline{F}^*
\to N\to 0$ is exact. Apply $\Hom_A(-,A)$ to learn that 
$$
0\to N^*\to \overline{F}^{**}\xrightarrow{\overline{d_1}^*} \textstyle\bigwedge^3\overline{F}^{**}$$is exact. It is easy to see that 
$\overline{F}^{**}\xrightarrow{\overline{d_1}^*} \bigwedge^3\overline{F}^{**}$ 
is isomorphic to 
$\overline{F}\xrightarrow{\overline{\delta_1}} \bigwedge^3\overline{F}$.
Assertion (\ref{yet.a}) now gives that $N\cong \ker  \overline{\delta_1}\cong \ker \overline{d_1}^*\cong N^*$.

\medskip(\ref{yet.d}) Lemma~\ref{main-Dream-Lemma}, especially (\ref{3.11.0}) ensures that $N$ is a maximal Cohen-Macaulay $A$-module. The rank of $N$ is calculated in Lemma~\ref{exact-seq}. The self-duality of $N$ is established in (\ref{yet.c}).

\medskip(\ref{yet.e}) It follows from local duality (or the Auslander-Bridger formula, see, for example, \cite[Thms.~1.4.8 and 1.4.9]{Ch00}) that the maximal Cohen-Macaulay module $N$ over the Gorenstein ring $A$ satisfies $\Ext^i_A(N,A)=0$ for all positive $i$. So $\mathbb X\to N\to 0$ and $0\to N^*\to \mathbb X^*$ are both acyclic. The complexes $\mathbb X$ and $\mathbb X^*$  may be patched together at $N\cong N^*$ to form the totally acyclic complex $\mathbb Y$.
\end{proof}

\bigskip

\section{The  main result.}\label{main}

\bigskip

The main result of the paper is Theorem~\ref{main-Theorem} where we prove that $J$ is a perfect Gorenstein ideal of grade $\binom{\goth f-2}2+2$. We estimate the grade of $J$ in Lemma~\ref{*.enough} and we use the exact sequence (\ref{*.claim.1}) to estimate the projective dimension of $\mathcal R/J$.

\begin{lemma}\label{*.enough}Adopt the language of {\rm\ref{data2}} and 
{\rm\ref{Not2}} with $3\le \goth f$. If the base ring $R_0$ is 
an arbitrary commutative Noetherian ring, then the height of the ideal $J$ satisfies the inequality $$\textstyle \binom{\goth f-2}2+2\le \htt J.$$
\end{lemma}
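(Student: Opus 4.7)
My plan is to exploit the fact, recorded in Corollary~\ref{carry forward}, that $I\mathcal{R}$ is a perfect Gorenstein ideal of grade $g:=\binom{\goth f-2}{2}$ in $\mathcal{R}$. The change-of-rings identity $\Ext^{p+g}_\mathcal{R}(M,\mathcal{R})\cong \Ext^p_\mathcal{A}(M,\mathcal{A})$ for $\mathcal{A}$-modules $M$, valid because $\Ext^g_\mathcal{R}(\mathcal{A},\mathcal{R})\cong\mathcal{A}$ (the Gorenstein hypothesis), yields $\grade_\mathcal{R} J = g + \grade_\mathcal{A}(K\mathcal{A})$ upon taking $M=\mathcal{A}/K\mathcal{A}=\mathcal{R}/J$. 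Since $\htt J\ge \grade_\mathcal{R} J$ holds in every Noetherian ring, the Lemma will follow once I show that $\grade_\mathcal{A}(K\mathcal{A})\ge 2$.

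To produce a regular sequence of length two in $K\mathcal{A}$ on $\mathcal{A}$, I plan to use (the images in $\mathcal{A}$ of) the first two entries of $\mathbf{tX}$, namely $u_1=-\sum_{j\ge 2}t_j x_{1,j}$ and $u_2=t_1 x_{1,2}-\sum_{j\ge 3}t_j x_{2,j}$. As a preliminary step I would verify that $(x_{1,2},u_1)$ is a regular sequence on $\mathcal{A}=A[t_1,\ldots,t_{\goth f}]$: Corollary~\ref{KL-consq} gives that $x_{1,2}$ is a non-zero-divisor on $A$, hence on the polynomial extension $\mathcal{A}$; and after reducing modulo $x_{1,2}$, the polynomial $u_1$ becomes a $t$-linear element whose coefficient ideal is $(x_{1,3},\ldots,x_{1,\goth f})\subseteq A/x_{1,2}$. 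Corollary~\ref{KL-consq} also gives that $x_{1,3}$ is a non-zero-divisor on $A/x_{1,2}$, so this coefficient ideal has zero annihilator, forcing $u_1$ to be a non-zero-divisor on $\mathcal{A}/x_{1,2}$.

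The main obstacle will be to transfer this to the sequence $(u_1,x_{1,2})$, i.e., to show that $x_{1,2}$ is a non-zero-divisor on $\mathcal{A}/u_1$. The plan is a local-to-global argument: if an associated prime $P$ of $\mathcal{A}/u_1$ contained $x_{1,2}$, then $P$ would contain $(x_{1,2},u_1)$, and localization would present $\mathcal{A}_P$ as a Noetherian local ring on which $(x_{1,2},u_1)$ is a regular sequence; the standard permutation lemma for regular sequences in a local ring would then give $(u_1,x_{1,2})$ as a regular sequence on $\mathcal{A}_P$, making $x_{1,2}$ a non-zero-divisor on $(\mathcal{A}/u_1)_P$ and contradicting $P\in\Ass_\mathcal{A}(\mathcal{A}/u_1)$. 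Once $x_{1,2}$ is established as a non-zero-divisor on $\mathcal{A}/u_1$, the identification $\mathcal{A}/u_1=(B/u_1)[t_1]$ with $B=A[t_2,\ldots,t_{\goth f}]$ (which is valid because $u_1$ lies in $B$) presents $u_2 = x_{1,2}t_1+u_2'$, with $u_2'=-\sum_{j\ge 3}t_j x_{2,j}\in B$, as a polynomial in $t_1$ over $B/u_1$ whose leading coefficient is a non-zero-divisor; such a polynomial is automatically a non-zero-divisor on $(B/u_1)[t_1]=\mathcal{A}/u_1$, and this completes the regular sequence $(u_1,u_2)$ of length two in $K\mathcal{A}$.
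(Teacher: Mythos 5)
The proposal takes a genuinely different route from the paper. The paper's proof is a direct height estimate: for any prime $P\supseteq J$ it exhibits a long enough chain of specializations beneath $P$, splitting into cases according to whether $t_1\in P$ or $t_1\notin P$ and, in the second case, passing to the localization $\mathcal R_{\,t_1}$. You instead invoke Gorenstein duality (the Rees change of rings, powered by Corollary~\ref{carry forward}) to trade the grade of $J$ in $\mathcal R$ for the grade of $K\mathcal A$ in $\mathcal A$, and then build an explicit $\mathcal A$-regular sequence of length two inside $K\mathcal A$ out of the first two entries of $\mathbf{tX}$. This is a legitimate alternative, and in some ways more in the spirit of the rest of the paper, since it lives over $\mathcal A$ where the Bourbaki-ideal machinery of Section~\ref{main} operates, and it avoids the case split on $t_1$.

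There is one real gap: you never verify that $u_1$ itself is a non-zero-divisor on $\mathcal A$, which you need before $(u_1,u_2)$ qualifies as a regular sequence. Step 1 gives that $(x_{1,2},u_1)$ is a regular $\mathcal A$-sequence; step 2 gives that $x_{1,2}$ is regular on $\mathcal A/u_1\mathcal A$; but neither statement implies that $u_1$ is regular on $\mathcal A$. The local permutation argument of step 2 only engages primes that contain $(x_{1,2},u_1)$, whereas an associated prime of $\mathcal A$ containing $u_1$ need not contain $x_{1,2}$. The gap is easy to close: since $\mathcal A=A[t_1,\dots,t_{\goth f}]$ one has $\Ass_{\mathcal A}(\mathcal A)=\{Q\mathcal A\mid Q\in\Ass_A(A)\}$, and $u_1\in Q\mathcal A$ would force the coefficient $x_{1,2}$ into $Q$, contradicting Corollary~\ref{KL-consq}; alternatively, if $u_1a=0$, then your steps 1 and 2 drive $a$ into $\bigcap_n x_{1,2}^n\mathcal A$, which is $0$ by the grading. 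You should also flag that Corollary~\ref{carry forward} is stated for $4\le\goth f$; the case $\goth f=3$ (where $I=0$, $\mathcal A=\mathcal R$, and the change of rings is vacuous) is degenerate but still covered by the regular-sequence construction.
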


\begin{remark}\label{4.2} The assertion of Lemma~\ref{*.enough} is false when $\goth f=2$ because in this case $J$ equals $(t_1x_{1,2}\ ,\ t_2x_{1,2})$, which has height $1$; see Remark~\ref{R4}.\ref{funny} for a continuation of this example. On the other hand, Lemma~\ref{*.enough} does hold when $\goth f=3$; indeed, in this case, $J$ is the ideal generated by the maximal minors of the generic matrix
$$\bmatrix t_1&t_2&t_3\\x_{2,3}&-x_{1,3}&x_{1,2}\endbmatrix;$$see Remark~\ref{R4}.\ref{funny+} for a continuation of this example.
\end{remark}

\begin{proof} It suffices to replace $R_0$ with $R_0/p$ for some minimal prime ideal $p$ in $R_0$ and to prove the result when $R_0$ is a domain.
We use the language of Remark~\ref{R2} and view $J$ as the ideal $\Pf_4(\mathbf {X})+
I_1(\mathbf {tX})$ in the ring $\mathcal R=R_0[\{x_{i,j}\},\{t_i\}]$. Let $P$ be a prime ideal of $\mathcal R$ which contains $J$. We show $$\textstyle\binom{\goth f-2}2+2\le \htt P.$$

If $t_1\in P$, then $I'=\Pf_4(\mathbf{X})+(t_1)$ is a prime ideal of height $\binom{\goth f-2}2+1$ which is contained in $P$; furthermore, the first entry of $\mathbf {tX}$ is a  non-zero element of $P\setminus I'$. Thus, 
$\binom{\goth f-2}2+2\le \htt P$. 

If $t_1\notin P$, then let 
${\mathbf X}'$ be $\mathbf X$  with the first column removed, 
${\mathbf X}''$ be $\mathbf X$   with the first row and first column removed,
and $I''$ be the ideal $\Pf_4({\mathbf X}'')$. 
Observe that $I''$ is a  
prime ideal of height $\binom{\goth f-3}2$ (this is where we use the hypothesis that $3\le \goth f$);  $I''$ is contained in $P$;
and the entries of $\mathbf {tX}'$ form a regular sequence on $\mathcal R_{\ t_1}/I''\mathcal R_{\ t_1}$ in $P\mathcal R_{\ t_1}$. It follows that
$$\textstyle 
\binom{\goth f-2}2+2=\binom{\goth f-3}2+\goth f-1\le \htt P \mathcal R_{\ t_1}=\htt P.
\vspace{-18pt}$$
\end{proof}

\begin{proposition}\label{*.claim}Adopt the language of {\rm\ref{data2}} and {\rm\ref{Not2}}. If $2\le \goth f$ and $R_0$ is a Cohen-Macaulay domain,
 then  there is an exact sequence of $\mathcal A$-modules{\rm:}
\begin{equation}\label{*.claim.1}0\to \mathcal A\xrightarrow{\tau}
\mathcal N\xrightarrow{\tau(\xi)}\mathcal A\to \mathcal R/ J\to 0.\end{equation} The map
$\tau:\mathcal A\to  \mathcal N$ sends the element $1$ of $\mathcal A$ to the class of $\tau$ in $$\textstyle \mathcal N
=
\mathcal A\otimes_{\mathcal R}\coker\big(\xi:\bigwedge^3\mathcal F^*\to \mathcal F^*\big).$$ If $\phi_1$ is in $\mathcal F^*$, then the map $\tau(\xi):
\mathcal N\to \mathcal A$ sends the class of $\phi_1$ in $
\mathcal N$ to the class of $[\tau(\xi)](\phi_1)$ in $\mathcal A=\mathcal R/(I\cdot \mathcal R)$. The map $\mathcal A\to \mathcal R/ J$ is the natural quotient map $$\mathcal A=\mathcal R/(I\cdot \mathcal R)\to \mathcal R/(I\cdot \mathcal R+ K)=\mathcal R/J.$$ 
\end{proposition}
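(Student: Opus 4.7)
First I would verify well-definedness of the maps and the complex property. The map $\tau(\xi)\colon\mathcal N\to\mathcal A$ is well-defined because for a basis element $\phi_3=e_k^\ast\wedge e_j^\ast\wedge e_i^\ast$ of $\bigwedge^3\mathcal F^\ast$ (with $i<j<k$), Remarks~\ref{promise} and \ref{R2}.\ref{R2.f} yield
$$[\tau(\xi)](\xi(\phi_3))=x_{j,k}(\mathbf{tX})_i-x_{i,k}(\mathbf{tX})_j+x_{i,j}(\mathbf{tX})_k=\sum_{\ell=1}^{\goth f}t_\ell\,\Pf(\mathbf X_{\{i,j,k,\ell\}})\in I\mathcal R.$$
The composition $\mathcal A\to\mathcal N\to\mathcal A$ sends $1$ to $\tau(\tau(\xi))=\mathbf t\mathbf X\mathbf t^{\rm T}=0$ by the alternating property of $\mathbf X$; and the composition $\mathcal N\to\mathcal A\to\mathcal R/J$ vanishes because the image of $\tau(\xi)|_{\mathcal N}$, generated by the elements $(\mathbf{tX})_k=[\tau(\xi)](e_k^\ast)$, equals $K\mathcal A=\ker(\mathcal A\twoheadrightarrow\mathcal R/J)$. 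This simultaneously gives exactness at the rightmost $\mathcal A$ and at $\mathcal R/J$.

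Next I would apply Observation~\ref{yet-to-come}.\ref{yet.b} (valid since $R_0$ is a domain) to embed $\mathcal N\cong\im\overline{d_0}\subseteq\overline{\mathcal F}=\mathcal A^{\goth f}$ via $[\phi_1]\mapsto\phi_1(\xi)$. Under this identification, $[\tau]\in\mathcal N$ becomes the column $\tau(\xi)=\overline{d_0}(\tau)\in\overline{\mathcal F}$ whose entries are the $(\mathbf{tX})_k$, and a direct computation shows that $\tau(\xi)\colon\mathcal N\to\mathcal A$ coincides, up to an overall sign, with the restriction to $\mathcal N$ of the linear functional $\tau=\sum_k t_k e_k^\ast\colon\overline{\mathcal F}\to\mathcal A$. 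Since $\mathcal A=A[t_1,\dots,t_{\goth f}]$ is a polynomial ring over the domain $A$ (Lemma~\ref{KL}.\ref{KL.b}), $\mathcal A$ is itself a domain; and since $\tau(\xi)$ has a nonzero coordinate (e.g.\ $(\mathbf{tX})_1=-t_2 x_{1,2}-\cdots-t_{\goth f}x_{1,\goth f}\ne0$), the map $\tau\colon\mathcal A\to\mathcal N$ is injective.

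The main obstacle will be exactness at $\mathcal N$. Since $(t_1,\dots,t_{\goth f})$ is a regular sequence on $\mathcal A$, the Koszul complex provides $\ker(\tau\colon\overline{\mathcal F}\to\mathcal A)=\im d_\tau$, where $d_\tau(e_i\wedge e_j)=t_ie_j-t_je_i$, and one checks $d_\tau(\xi)=\tau(\xi)$ directly. Given $n=\overline{d_0}(\phi_1)\in\mathcal N$ with $\tau(n)=0$, write $n=d_\tau(\omega)$ for some $\omega\in\bigwedge^2\overline{\mathcal F}$; it suffices to produce $a\in\mathcal A$ with $\omega-a\xi\in\ker d_\tau$, because then $\overline{d_0}(\phi_1-a\tau)=d_\tau(\omega-a\xi)=0$ and Observation~\ref{yet-to-come}.\ref{yet.a} forces $\phi_1-a\tau\in\im\overline{d_1}$, so that $n=a\tau(\xi)$ in $\mathcal N$. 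In coordinates the identity $\overline{d_0}(\phi_1)=d_\tau(\omega)$ reads $\mathbf X\mathbf a=\Omega\mathbf t$ in $\mathcal A^{\goth f}$. Because the entries of $\mathbf X$ lie in $A$ and are independent of the $t$-variables, I plan to match coefficients of monomials in $t_1,\dots,t_{\goth f}$ and recurse on total $t$-degree: at degree zero the equation $\mathbf X\mathbf a^{(0)}=0$ together with Observation~\ref{yet-to-come}.\ref{yet.a} allows modification of $\phi_1$ within $\im\overline{d_1}$ to arrange $\mathbf a^{(0)}=0$, while at higher degrees one exploits the freedom to modify $\omega$ by the next Koszul boundary $\im d_\tau^{(2)}$ without altering $n$. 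The delicate step is the $t$-degree one case, where the $t$-linear part of $n$ packages into an alternating $A$-matrix $\Omega^{(0)}$ whose columns lie in $N=\im\mathbf X$ and one must show $\Omega^{(0)}=a^{(0)}\mathbf X$ for some $a^{(0)}\in A$; I expect this to follow from the rank-two and self-duality structure of $N$ recorded in Observation~\ref{yet-to-come}.\ref{yet.c} and \ref{yet.d}, which together identify $\Omega^{(0)}$ with an element of the generically rank-one module $\bigwedge^2_A N$ and force it into the $A$-line generated by $\xi$.
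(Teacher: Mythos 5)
Your verification of well-definedness, the complex property, and exactness at $\mathcal R/J$ and the right-hand $\mathcal A$ are all fine, as is your argument for injectivity of $\tau\colon\mathcal A\to\mathcal N$ (the paper instead argues via $r\cdot K\subseteq I$ and primeness of $I$, but your route through $\mathcal N\cong\im\overline{d_0}\subseteq\overline{\mathcal F}$ and the domain property of $\mathcal A$ is equally valid). The identification of $\tau(\xi)\colon\mathcal N\to\mathcal A$ with $\pm\tau|_{\mathcal N}$ and of $d_\tau(\xi)$ with $\tau(\xi)$ are also correct.

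The gap is in exactness at $\mathcal N$, which is the whole difficulty of the proposition. Your plan correctly reduces, via the $t$-grading, to showing that a homogeneous $n\in N[\mathbf t]_d$ with $\tau(n)=0$ lies in $A[\mathbf t]_{d-1}\cdot\tau(\xi)$. But the two pieces you need are not supplied. For $d=1$ the claim is that an alternating $\goth f\times\goth f$ matrix $M$ over $A$ all of whose columns lie in $N$ equals $a\mathbf X$ for some $a\in A$; you ``expect'' this from the rank-two/self-dual structure of $N$, but that expectation is not a proof. (The generic argument does show $M=a\mathbf X$ with $a\in\operatorname{Frac}(A)$, and one can then push $a$ into $A$ using $\grade(x_{i,j}:i<j)\geq 2$ in the Cohen--Macaulay domain $A$ together with $\Hom_A(\mathfrak m,A)=A$; but $\bigwedge^2_A N$ being rank one does not by itself force it to be cyclic generated by $\xi$, so the route you sketch has a further unaddressed issue.) For $d\geq 2$ you say one ``exploits the freedom to modify $\omega$ by the next Koszul boundary,'' but this is merely a restatement: $d_\tau(\omega)\in\mathcal A\tau(\xi)$ iff $\omega$ can be chosen in $\mathcal A\xi+\ker d_\tau$, which is exactly what is to be shown, so nothing is gained, and the higher-degree pieces do not reduce to the degree-one case in any evident way. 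The paper's own argument sidesteps all of this: it first localizes at any $x=\xi(\phi_1\wedge\phi_1')$ and shows the localized sequence becomes the augmented Koszul complex on the two-element regular sequence $[\tau(\xi)](\phi_1),[\tau(\xi)](\phi_1')$ (Lemma~\ref{*.critical}, which itself leans on the rank computation in Lemma~\ref{exact-seq}); it then runs an associated-prime/depth argument on the short exact sequence $0\to\mathcal A\to Z\to\HH\to 0$ to conclude $\HH=0$. That depth-counting step is the essential new ingredient your proposal lacks, and without it (or a completed version of your degree-by-degree analysis) the exactness at $\mathcal N$ remains unproven.
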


\begin{remarks}\label{R4}\begin{enumerate}[\rm(a)]\item After we prove Theorem~\ref{main-Theorem}, we are able to improve Proposition~\ref{*.claim}. In the improved version, $R_0$ is allowed to be an arbitrary commutative Noetherian ring. See Proposition~\ref{improved}.  
\item\label{here it is} The exact sequence $0\to \mathcal A\to \mathcal N\to J\mathcal A \to 0$, which is a consequence of   (\ref{*.claim.1}), exhibits $J\mathcal A$ as a Bourbaki ideal of $\mathcal N$, in the sense of \cite{A66, M80, BHU87, SUV03}.
\item The map $\tau({\xi})$ of (\ref{*.claim.1}) is well-defined.
Indeed,
if $\phi_3\in \bigwedge^3\mathcal F^*$, then $\xi(\phi_3)$ represents $0$ in $\mathcal N$ and
$[\tau(\xi)](\xi(\phi_3))$, which is equal to $\xi^{(2)}(\phi_3\wedge \tau)$ by (\ref{Gamma}) and (\ref{compat}), is equal to $0$ in $\mathcal A$. 
\item It is not difficult to see that (\ref{*.claim.1}) is a complex of $\mathcal A$-modules.
\item \label{funny}If $\goth f=2$, then $\mathcal R=\mathcal A$ and, in the language of Remark~\ref{R2}, the complex (\ref{*.claim.1}) is
$$0\to\mathcal R \xrightarrow{\bmatrix t_1\\t_2\endbmatrix}\mathcal R^2\xrightarrow{\bmatrix -t_2x_{1,2}&t_1x_{1,2}\endbmatrix}\mathcal R\to \mathcal R/(t_1x_{1,2}\ ,\ t_2x_{1,2})\to 0,$$ which is exact, see Remark~\ref{4.2}
\item \label{funny+}If $\goth f=3$, then $\mathcal R=\mathcal A$ and, in the language of Remark~\ref{R2}, the complex (\ref{*.claim.1}) is
\begin{align*}0&\to\mathcal R \xrightarrow{\bmatrix t_1\\t_2\\t_3\endbmatrix}\frac{\mathcal R^3}{\left(\bmatrix x_{2,3}\\-x_{1,3}\\x_{1,2}\endbmatrix\right)}\xrightarrow{\bmatrix -t_2x_{1,2}-t_3x_{1,3}&t_1x_{1,2}-t_3x_{2,3}&t_{1}x_{1,3}+t_2x_{2,3}\endbmatrix}\mathcal R\\&\to \mathcal R/(-t_2x_{1,2}-t_3x_{1,3}\ ,\ t_1x_{1,2}-t_3x_{2,3}\ ,\ t_{1}x_{1,3}+t_2x_{2,3})\to 0,\end{align*} which is exact; see Remark~\ref{4.2}\end{enumerate}
\end{remarks}

\medskip\noindent   
Observation  
\ref{*.10.9}
and Lemma \ref{*.critical} are used in the proof of Proposition~\ref{*.claim}, which is given in \ref{*.proof-of-claim}.

\begin{observation}\label{*.10.9} Retain the hypotheses of Proposition~{\rm\ref{*.claim}}. The complex {\rm(\ref{*.claim.1})} is exact at $\mathcal R/J$ and at both copies of $\mathcal A$.\end{observation}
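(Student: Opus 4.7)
The plan is to dispatch the three exactness claims in order of increasing difficulty. Exactness at $\mathcal R/J$ is immediate, since the map $\mathcal A = \mathcal R/(I\mathcal R) \twoheadrightarrow \mathcal R/(I\mathcal R + K) = \mathcal R/J$ is a natural quotient and therefore surjective.

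For exactness at the right copy of $\mathcal A$, I would identify the image of $\tau(\xi)\colon\mathcal N\to \mathcal A$ with the kernel $K\mathcal A = (I\mathcal R+K)/(I\mathcal R)$ of the map $\mathcal A \to \mathcal R/J$. Using the compatibility~(\ref{compat}) of the two module actions, $[\tau(\xi)](\phi_1) = \phi_1(\tau(\xi))$ for every $\phi_1 \in \mathcal F^*$, and in the coordinates of Remark~\ref{R2} the element $\tau(\xi)\in \mathcal F$ is represented by the column vector $(\mathbf{tX})^{\mathrm T}$. Pairing against the coordinate functionals $e_i^*$ recovers the entries of $\mathbf{tX}$; taking $\mathcal R$-linear combinations then shows the image equals $I_1(\mathbf{tX})\mathcal A = K\mathcal A$.

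The hard part is injectivity at the left copy of $\mathcal A$, i.e., that the map $a\mapsto a[\tau]$ from $\mathcal A$ into $\mathcal N$ has trivial kernel. My plan is to combine two ingredients. First, since $R_0$ is a Cohen--Macaulay domain, Lemma~\ref{KL} makes $A=R/I$ a Cohen--Macaulay domain and Observation~\ref{yet-to-come}.\ref{yet.d} makes $N$ a maximal Cohen--Macaulay $A$-module; hence $\mathcal A=A[t_1,\ldots,t_{\goth f}]$ is a Cohen--Macaulay domain and $\mathcal N = N\otimes_A\mathcal A$ is a maximal Cohen--Macaulay $\mathcal A$-module. A nonzero maximal Cohen--Macaulay module over a Cohen--Macaulay domain has only the zero ideal among its associated primes, so $\mathcal N$ is torsion-free over $\mathcal A$. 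Second, I would verify that $[\tau]\neq 0$ in $\mathcal N$ by a bidegree argument: equip $\mathcal R$ with the bigrading $\deg x_{i,j}=(1,0)$, $\deg t_i=(0,1)$; since $I$ is generated in bidegree $(2,0)$ and $\xi\in\bigwedge^2 F\subseteq\bigwedge^2\mathcal F$ lies in bidegree $(1,0)$, the submodule of relations $I\cdot \mathcal F^* + \xi\cdot\bigwedge^3\mathcal F^*$ has trivial intersection with the bidegree $(0,1)$ part of $\mathcal F^*$. Since $\tau\in\mathcal F^*$ is nonzero and pure of bidegree $(0,1)$, its class is nonzero in $\mathcal N$. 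Combining the two ingredients gives $\ann_{\mathcal A}([\tau])=0$, hence injectivity.

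The main obstacle I anticipate is the torsion-freeness step: one must justify the passage from the maximal Cohen--Macaulay property of $N$ over $A$ to the torsion-freeness of $\mathcal N$ over $\mathcal A$ across the polynomial extension $\mathcal A = A[t_1,\ldots,t_{\goth f}]$. Alternatively, one can replace this step by invoking the Persistence of Perfection Principle (Theorem~\ref{PoPP}) on the resolution of $N$ furnished by Lemma~\ref{main-Dream-Lemma} to confirm directly that the associated primes of $\mathcal N$ over $\mathcal A$ consist only of $(0)$.
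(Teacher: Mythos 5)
Your proof is correct, but it takes a genuinely different route for the only nontrivial part, namely injectivity of $\tau\colon\mathcal A\to\mathcal N$. The paper's argument is a direct computation that avoids any Cohen--Macaulay input: if $r\in\mathcal R$ satisfies $r\tau\equiv\xi(\phi_3)\pmod{I\mathcal F^*}$ for some $\phi_3\in\bigwedge^3\mathcal F^*$, then evaluating both sides on $\xi$ and invoking Observation~\ref{doo-8.2}.\ref{doo-8.2.a} gives $r\cdot\tau(\xi)\equiv\phi_3(\xi^{(2)})\equiv 0\pmod{I\mathcal F}$; reading off coordinates yields $rK\subseteq I\mathcal R$, and since $I\mathcal R$ is prime while $K\not\subseteq I\mathcal R$ on degree grounds, one concludes $r\in I\mathcal R$. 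Your approach instead factors the injectivity through two structural facts: the torsion-freeness of $\mathcal N$ over $\mathcal A$ (via Observation~\ref{yet-to-come}.\ref{yet.d} and the flat base change $A\hookrightarrow\mathcal A$, together with the observation that a nonzero maximal Cohen--Macaulay module over a Cohen--Macaulay domain has $(0)$ as its only associated prime), and the nonvanishing of $[\tau]$ in $\mathcal N$ by a bigraded degree argument. Both steps are sound, so the argument goes through. The trade-off: your route is more structural and makes the torsion-freeness mechanism explicit, at the cost of invoking the heavier MCM fact about $\mathcal N$, which the paper deliberately does not need at this stage (primeness of $I$ alone suffices). Your treatment of exactness at $\mathcal R/J$ and at the right-hand $\mathcal A$ matches the paper's, which simply declares these clear.
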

\begin{proof} It is clear that (\ref{*.claim.1}) is exact at $\mathcal R/J$ and at the right hand $\mathcal A$. We  prove that (\ref{*.claim.1}) is exact at the left hand $\mathcal A$. Let $r\in \mathcal R$ with $r\cdot \tau\equiv\xi(\phi_3)\mod I\mathcal F$ for some $\phi_3\in \bigwedge^3 \mathcal F^*$. Apply $r\tau$ to $\xi$ and use Observation~\ref{doo-8.2}.\ref{doo-8.2.a} to learn that 
$$r\cdot \tau(\xi)\equiv[\xi(\phi_3)](\xi)\equiv\phi_3(\xi^{(2)})\in I\mathcal F.$$It follows that $r\cdot K\subseteq I$. The ideal $I$ is prime and degree considerations show that $K\not\subseteq I$. It follows that $r\in I$. Thus, $\tau: \mathcal A\to \mathcal N$ is an injection.
\end{proof}

\begin{lemma}\label{*.critical}Adopt the language of {\rm\ref{data2}} and {\rm\ref{Not2}}. 
Let  $\phi_1,\phi_1'$ be elements of $\mathcal F^*$ with the property that the element $\phi_1\wedge \phi_1'$ is part of a basis for $\mathcal F^*$ and let $x$ be the element $\xi(\phi_1\wedge \phi_1')$ of $\mathcal R$. Then the following statements hold. \begin{enumerate}[\rm(a)] \item\label{*.critical.a} If the base ring $R_0$ is a commutative Noetherian domain, then the localization {\rm(\ref{*.claim.1})}$_x$ of the complex {\rm(\ref{*.claim.1})} at $x$  is isomorphic to
\begin{align*}0\to \mathcal A_{\, x}\xrightarrow {\bmatrix -[\tau(\xi)](\phi_1')\\\phantom{-}[\tau(\xi)](\phi_1)\endbmatrix} \mathcal A_{\,x} \oplus \mathcal A_{\,x} &\xrightarrow {\bmatrix [\tau(\xi)](\phi_1)&[\tau(\xi)](\phi_1')\endbmatrix} \mathcal A_{\,x}\\ &\longrightarrow \frac{\mathcal A_{\,x}}{([\tau(\xi)](\phi_1)\ ,\ [\tau(\xi)](\phi_1'))\mathcal A_{\,x}} \to 0.\end{align*}
\item\label{*.critical.b} 
If $R_0$ is a Cohen-Macaulay domain, then
 the localization  {\rm(\ref{*.claim.1})}$_x$ is exact.\end{enumerate}\end{lemma}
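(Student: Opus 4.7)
The plan is to show in (a) that after inverting $x = \xi(\phi_1 \wedge \phi_1')$ the module $\mathcal N_x$ becomes free of rank two on the classes of $\phi_1$ and $\phi_1'$, and then to read off the stated Koszul form of the complex. For (b), exactness will follow once $a_1 := [\tau(\xi)](\phi_1)$ and $a_2 := [\tau(\xi)](\phi_1')$ are shown to satisfy $\operatorname{grade}(a_1, a_2)\mathcal A_x \ge 2$. The key input for (a) is Observation~\ref{doo-8.2}.\ref{doo-8.2.b} applied to $\phi_1 \wedge \phi_1' \wedge \phi''$ for arbitrary $\phi'' \in \mathcal F^*$:
$$\xi(\phi_1 \wedge \phi_1' \wedge \phi'') \;=\; x\,\phi'' \;-\; \xi(\phi_1 \wedge \phi'')\,\phi_1' \;+\; \xi(\phi_1' \wedge \phi'')\,\phi_1 \quad\text{in } \mathcal F^*.$$
Since the left-hand side lies in $\operatorname{im}(\overline{d_1})$, it vanishes in $\mathcal N$; after inverting $x$ every class of an element of $\mathcal F^*$ is thus a combination of the classes of $\phi_1$ and $\phi_1'$. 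By Lemma~\ref{exact-seq}, $N$ has rank two over the domain $A$, so $\mathcal N_x$ has rank two over $\mathcal A_x$, which is itself a domain by Lemma~\ref{KL}.\ref{KL.b}. A rank-two module over a Noetherian domain generated by two elements is automatically free, whence $\mathcal N_x \cong \mathcal A_x^2$ with basis the classes of $\phi_1$ and $\phi_1'$.

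In this basis the map $[\tau(\xi)]\colon \mathcal N_x \to \mathcal A_x$ is represented by the row $[a_1, a_2]$ tautologically. Specializing the displayed identity at $\phi'' = \tau$, and using compatibility~(\ref{compat}) together with the anticommutativity of the wedge on $\mathcal F^*$ (which gives $\xi(\phi_1 \wedge \tau) = -a_1$ and $\xi(\phi_1' \wedge \tau) = -a_2$), yields $\tau \equiv x^{-1}(a_2 \phi_1 - a_1 \phi_1')$ in $\mathcal N_x$; absorbing the unit $-1/x \in \mathcal A_x$ into an automorphism of the source $\mathcal A_x$ produces the claimed column $[-a_2, a_1]^{\rm T}$. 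Finally, applying $[\tau(\xi)]$ to the displayed identity gives $x\,[\tau(\xi)](\phi'') \equiv \xi(\phi_1\wedge\phi'')\,a_2 - \xi(\phi_1'\wedge\phi'')\,a_1$ modulo $I\mathcal R$, provided we know $[\tau(\xi)](\xi(\phi_3)) \in I$ for every $\phi_3 \in \bigwedge^3\mathcal F^*$; the latter follows from the identity $[\tau(\xi)](\xi(\phi_3)) = (\tau \wedge \phi_3)(\xi^{(2)})$, derivable from Proposition~\ref{A3} and the Gamma rule~(\ref{Gamma}), whose right-hand side lies in $I = \operatorname{im}(\xi^{(2)}\colon \bigwedge^4\mathcal F^* \to \mathcal R)$. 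Hence $K\mathcal A_x = (a_1, a_2)\mathcal A_x$ and $(\mathcal R/J)_x = \mathcal A_x/(a_1, a_2)\mathcal A_x$, completing (a).

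For (b), part (a) identifies the localized complex with the Koszul complex on the pair $a_1, a_2$ over $\mathcal A_x$, whose exactness by Rees' criterion reduces to $\operatorname{grade}(a_1, a_2)\mathcal A_x \ge 2$. The Cohen-Macaulay hypothesis on $R_0$ makes $\mathcal R$ Cohen-Macaulay, and Lemma~\ref{KL}.\ref{KL.a} makes $I\mathcal R$ a perfect ideal so that $\mathcal A$ is Cohen-Macaulay; consequently $\mathcal A_x$ is Cohen-Macaulay and grade equals height. Lemma~\ref{*.enough} gives $\htt J \ge \binom{\goth f - 2}{2} + 2$, and subtracting $\htt I\mathcal R = \binom{\goth f - 2}{2}$ yields $\htt J\mathcal A \ge 2$; since height cannot decrease under localization, we obtain $\htt(a_1, a_2)\mathcal A_x \ge 2$, as required. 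The main source of friction will be the sign and unit bookkeeping in the identification of $\tau$ in (a); the remainder is routine module-theoretic assembly.
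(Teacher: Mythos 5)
Your proposal is correct and follows essentially the same route as the paper's: use the three‑term identity of Observation~\ref{doo-8.2}.\ref{doo-8.2.b} to get surjectivity of $\mathcal A_x^2\twoheadrightarrow\mathcal N_x$, the rank computation of Lemma~\ref{exact-seq} (together with $\mathcal A_x$ being a domain) for injectivity, read off the Koszul matrices, and close part (b) with the grade bound from Lemma~\ref{*.enough} and the Cohen–Macaulayness of $\mathcal A$. The one small slip is a sign: with the paper's conventions one has $\xi(\phi_1\wedge\tau)=+[\tau(\xi)](\phi_1)$, not $-[\tau(\xi)](\phi_1)$ (check this against the fact that the matrix for $d_0$ is $-\mathbf X$, so, e.g., $[e_2^*(\xi)](e_1^*)=-x_{1,2}=\xi(e_1^*\wedge e_2^*)$), hence $x\tau=a_1\phi_1'-a_2\phi_1$ and the extra $-1$ you absorbed into the source automorphism is not actually needed; this does not affect the correctness of the final complex.
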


\begin{remark-no-advance} Once we prove Theorem~\ref{main-Theorem}, then a much stronger version of Lemma~\ref{*.critical}
is also true, see Proposition~\ref{improved}.
\end{remark-no-advance}
\begin{proof} (\ref{*.critical.a}) The element $x$ in $\mathcal R$ is a non-zero element of $\mathcal R_{\ (1,0)}$. The ideal $I\cdot \mathcal R$ of $\mathcal R$ is a prime ideal generated by elements of $\mathcal R_{\ (2,0)}$; hence $x$ is a non-zero-divisor   in $\mathcal A=\mathcal R/(I\cdot \mathcal R)$. Consider the  map
\begin{equation}\label{*.natural-map}\mathcal A_{\,x}\oplus \mathcal A_{\,x}\longrightarrow 
\mathcal N_{\ \,x},\end{equation}which sends $\bmatrix a_1&a_2\endbmatrix^{\rm T}$ to the class of $a_1 \phi_1+a_2\phi_1'$. This  map is onto because, if $\phi_1''\in \mathcal F$, then the equation
\begin{equation}\label{*.because}0=\xi(\phi_1\wedge \phi_1'\wedge \phi_1'')=x\cdot\phi_1''-\xi(\phi_1\wedge \phi_1'')\cdot \phi_1'+\xi(\phi_1'\wedge \phi_1'')\cdot \phi_1\end{equation}
holds in $\mathcal N$ (see Observation \ref{doo-8.2}.\ref{doo-8.2.b}); and therefore the class of  $\phi_1''$ in $\mathcal N_{\ \,x}$   is in the image of the map (\ref{*.natural-map}). Let $(0)$ be the prime ideal $(0)$ in the domain $\mathcal A$ and $L$ be the kernel of (\ref{*.natural-map}).  We know from Lemma~\ref{exact-seq} that $\mathcal N_{\ \,(0)}=\mathcal A_{\,(0)}\oplus \mathcal A_{\,(0)}$; hence $L_{(0)}=0$. On the other hand, $L$ is a submodule of a free $\mathcal A_{\, x}$-module and $\mathcal A_{\,x}$ is a domain; thus, $L=0$ and (\ref{*.natural-map}) is an isomorphism. 

Apply (\ref{*.because}), with $\tau$ in place of $\phi_1''$, to see that the composition 
$$\mathcal A_{\,x}\xrightarrow{x}\mathcal A_{\,x}\xrightarrow {\tau} \mathcal N_{\ \,x}$$ sends $1\in \mathcal A_{\,x}$ to
$$x\tau=[\tau(\xi)](\phi_1)\cdot \phi_1'-[\tau(\xi)](\phi_1')\cdot \phi_1$$
in $\mathcal N_{\ \,x}$; and therefore, the composition 
$$\mathcal A_{\,x}\xrightarrow{x}\mathcal A_{\,x}\xrightarrow {\tau} \mathcal N_{\ \,x}\xrightarrow {\text{(\ref{*.natural-map})}^{-1}}\mathcal A_{\,x}\oplus \mathcal A_{\,x}$$ sends $1\in \mathcal A_{\,x}$ to 
$$\bmatrix -[\tau(\xi)](\phi_1')\\\phantom{-}[\tau(\xi)](\phi_1)\endbmatrix\in \mathcal A_{\,x}\oplus \mathcal A_{\,x}.$$ It is clear that the composition
$$\mathcal A_{\,x}\oplus \mathcal A_{\,x}\xrightarrow{\text{(\ref{*.natural-map})}} \mathcal N_{\ \,x}\xrightarrow{\tau(\xi)} \mathcal A_{\,x}$$ sends 
$$\bmatrix 1\\0\endbmatrix \mapsto [\tau(\xi)](\phi_1)\quad \text{and} \quad \bmatrix 0\\1\endbmatrix \mapsto [\tau(\xi)](\phi_1').$$ This completes the proof of (\ref{*.critical.a}).

\medskip\noindent(\ref{*.critical.b}) We know from (\ref{*.critical.a}) that the ideal $J\mathcal A_{\,x}$ is generated by $[\tau(\xi)](\phi_1)$ and $[\tau(\xi)](\phi_1')$ and we know from Lemma~\ref{*.enough} that $2\le \htt(J\mathcal A)$. The ring $\mathcal A$ is Cohen-Macaulay; so, 
$$2\le \htt(J\mathcal A)=\grade J\mathcal A\le \grade J\mathcal A_{\,x}.$$It follows that   {\rm(\ref{*.claim.1})}$_x$, which, according to (\ref{*.critical.a}), is isomorphic to the augmented Koszul complex on the generating set  $\{[\tau(\xi)](\phi_1)\ ,\ [\tau(\xi)](\phi_1')\}$ of $J\mathcal A_{\,x}$, is exact. 
\end{proof}

\begin{chunk}\label{*.proof-of-claim}{\bf The proof of Proposition~\ref{*.claim}.} In light of Remark~\ref{R4}.\ref{funny}, we may assume that ${4\le \goth f}$. 
We know from  Observation \ref{*.10.9} that (\ref{*.claim.1}) is a complex of $\mathcal A$-modules which is exact everywhere except possibly at $\mathcal N$. Let $\HH$ be the homology of (\ref{*.claim.1}) at $\mathcal N$. We argue by contradiction. Assume that $\HH\neq 0$. Let $P$ be an associated prime of $\HH$. 
Lemma~\ref{*.critical} shows 
that $\HH_x=0$ for every $x$ in $\mathcal R$ of the form 
\begin{equation}\label{form}\text{$x=\xi(\phi_1\wedge \phi_1')$ where $\phi_1$ and $\phi_1'$ are in $\mathcal F^*$ with $\phi_1\wedge \phi_1'$ part of a basis for $\textstyle\bigwedge^2\mathcal F^*$.}\end{equation} The fact that $\HH_x=0$ and $\HH_P\neq 0$ forces $x$ to be an element of $P$. The $R_0$-module  $\mathcal R_{(1,0)}$ is generated by elements $x$ of the form (\ref{form}); therefore, $\mathcal R_{\ (1,0)}\subseteq P$.

Consider the complex (\ref{*.claim.1}). Let  $B$ be the image of $\tau:\mathcal A\to \mathcal N$ and   $Z$ be the kernel of $\tau(\xi):\mathcal N\to \mathcal A$. Combine the exact sequences 
\begin{align*}&0\to \mathcal A\to B\to 0&&\text{from Observation~\ref{*.10.9},}\quad\text{and}\\
&0\to B\to Z\to \HH\to 0\end{align*}
in order to obtain 
the exact sequence \begin{equation}\label{*.ses}0\to \mathcal A\to Z\to \HH\to 0.\end{equation} The $\mathcal R$-modules $\mathcal A$  and $\mathcal N$ are both perfect 
and their annihilators have grade $\binom{\goth f-2}2$; see Corollary~\ref{carry forward}.
The ring $\mathcal R$ is Cohen-Macaulay; so, 
$\mathcal A_{\,P}$ and $\mathcal N_{\ \,P}$ are both Cohen-Macaulay $\mathcal R_{\ P}$-modules with 
$$\depth \mathcal N_{\ \,P}=\dim \mathcal N_{\ \,P}=\dim \mathcal A_{\,P}=\depth \mathcal A_{\,P};$$ and this common number is equal to $\dim \mathcal R_{\ P}-\binom{\goth f-2}2$. Furthermore,  the ideal $(\mathcal R_{\ 1,0})$ of $\mathcal R$, which is prime of height $\binom{\goth f}2$, is contained in $P$. 
It follows that
$$\textstyle 2\le \binom{\goth f}2-\binom{\goth f-2}2\le \dim \mathcal A_{\,P}.$$(The left most inequality holds because $3\le \goth f$.)
 The module $Z_P$ is a non-zero submodule of $\mathcal N_{\ \,P}$; so $1\le \depth Z_P$. We have chosen $P$ with $\HH_P\neq 0$ and $\depth \HH_P=0$. The usual argument about the growth of depth in a short exact sequence shows that the exact sequence 
$$0\to \mathcal A_{\,P}\to Z_P\to \HH_P\to 0,$$ which is obtained by localizing the short exact sequence (\ref{*.ses}) at $P$,
  is impossible; see, for example, \cite[Prop.~1.2.9]{BH}. This contradiction establishes the result. \qed
\end{chunk} 

\begin{theorem}\label{main-Theorem}Adopt the language of {\rm\ref{data2}} and {\rm \ref{Not2}}. If $4\le \goth f$ and $R_0$ is an arbitrary commutative Noetherian ring, then $J$ is a perfect Gorenstein ideal of $\mathcal R$ of grade $\binom{\goth f-2}2+2$. In particular, if $R_0$ is a Gorenstein ring, then $\mathcal R/ J$ is a Gorenstein ring.  \end{theorem}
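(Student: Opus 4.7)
The plan is to combine the four-term exact sequence of Proposition~\ref{*.claim} with the perfection data of Corollary~\ref{carry forward} and the grade lower bound of Lemma~\ref{*.enough} to establish perfection of $\mathcal R/J$ of grade $n_0+2$, where $n_0:=\binom{\goth f-2}2$. Dualizing the exact sequence extracts cyclicity of the top $\Ext$, and generic perfection descends the result to arbitrary $R_0$ in the style of Lemma~\ref{KL}.\ref{KL.c}.

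\textbf{Perfection.} Since the formation of $J$ commutes with the base change $R_0\otimes_{\mathbb Z}(-)$, the generic perfection machinery used throughout the paper reduces the theorem to the cases $R_0=\mathbb Z$ and $R_0$ a field; both are Gorenstein Cohen--Macaulay domains, so Proposition~\ref{*.claim} is available. Setting $L:=\im(\tau(\xi))=J\mathcal A$ splits the four-term sequence into
\[ 0\to\mathcal A\to\mathcal N\to L\to 0,\qquad 0\to L\to\mathcal A\to\mathcal R/J\to 0. \]
By Corollary~\ref{carry forward}, $\pd_{\mathcal R}\mathcal A=\pd_{\mathcal R}\mathcal N=n_0$; chasing projective dimensions through both sequences yields $\pd_{\mathcal R}(\mathcal R/J)\le n_0+2$. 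Combined with the reverse inequality $\grade J\ge n_0+2$ of Lemma~\ref{*.enough} and the automatic $\grade J\le\pd_{\mathcal R}(\mathcal R/J)$, one obtains equality throughout, so $J$ is perfect of the asserted grade.

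\textbf{Gorenstein property.} Applying $\Hom_{\mathcal R}(-,\mathcal R)$ to both short exact sequences, the perfection of $\mathcal A$ and $\mathcal N$ kills their $\Ext$ outside degree $n_0$. Gorenstein-ness of $I\mathcal R$ (Corollary~\ref{carry forward}) supplies $\Ext^{n_0}_{\mathcal R}(\mathcal A,\mathcal R)\cong\mathcal A$; self-duality $\mathcal N\cong\mathcal N^{*}$, obtained by flat base change from Observation~\ref{yet-to-come}.\ref{yet.c} (valid because $R_0$ is a domain), supplies $\Ext^{n_0}_{\mathcal R}(\mathcal N,\mathcal R)\cong\mathcal N$. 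The long exact sequence from $0\to\mathcal A\to\mathcal N\to L\to 0$ therefore collapses to
\[ 0\to\Ext^{n_0}(L,\mathcal R)\to\mathcal N\to\mathcal A\to\Ext^{n_0+1}(L,\mathcal R)\to 0, \]
exhibiting $\Ext^{n_0+1}(L,\mathcal R)$ as a quotient of the cyclic $\mathcal A$. The long exact sequence from $0\to L\to\mathcal A\to\mathcal R/J\to 0$, together with $\Ext^{n_0+1}(\mathcal A,\mathcal R)=\Ext^{n_0+2}(\mathcal A,\mathcal R)=0$, yields $\Ext^{n_0+2}_{\mathcal R}(\mathcal R/J,\mathcal R)\cong\Ext^{n_0+1}(L,\mathcal R)$, which is cyclic. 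Hence $J$ is Gorenstein over $\mathbb Z$ and over any field.

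\textbf{Descent and main obstacle.} The argument at the end of the proof of Lemma~\ref{KL}.\ref{KL.c} now transfers Gorenstein-ness to arbitrary $R_0$: the minimal $\mathcal R$-free resolution of $\mathcal R/J$ over $R_0=\mathbb Z$ has length $n_0+2$ with top free module of rank one (equivalent to the cyclicity just proved), so by the Persistence of Perfection Principle (Theorem~\ref{PoPP}) the tensored complex $R_0\otimes_{\mathbb Z}(-)$ remains a free resolution of $\mathcal R/J$ with the same Betti numbers, and $J$ is perfect Gorenstein of grade $n_0+2$ in general. The main obstacle is the dualization step: the long exact sequence collapses to a four-term sequence with cyclic tail only because $\mathcal A$ and $\mathcal N$ share projective dimension $n_0$ \emph{and} both admit canonical identifications with their $\Ext^{n_0}(-,\mathcal R)$-duals---one via the Gorenstein-ness of $I\mathcal R$, the other via the self-duality of $\mathcal N$; losing either coincidence would destroy the visible cyclicity of $\Ext^{n_0+1}(L,\mathcal R)$ and hence the Gorenstein conclusion.
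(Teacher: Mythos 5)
Your proof is correct and follows the same overall strategy as the paper: reduce via generic perfection to $R_0$ a Cohen--Macaulay domain, invoke the four-term exact sequence of Proposition~\ref{*.claim}, bound $\pd_{\mathcal R}(\mathcal R/J)$ above by $\binom{\goth f-2}2+2$, and sandwich this against the grade lower bound of Lemma~\ref{*.enough}. Where you diverge is in extracting the Gorenstein property: the paper simply observes that the iterated mapping cone on $\mathbb P_{\mathcal A}\to\mathbb P_{\mathcal N}$, then onto $\mathbb P_{\mathcal A}$, places the rank-one top module of the resolution of $\mathcal A$ in homological degree $n_0+2$, so the top Betti number of the resulting resolution of $\mathcal R/J$ is one. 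You instead dualize the two short exact sequences and read off that $\Ext^{n_0+2}(\mathcal R/J,\mathcal R)\cong\Ext^{n_0+1}(L,\mathcal R)$ is a quotient of $\Ext^{n_0}(\mathcal A,\mathcal R)\cong\mathcal A$. These two arguments are morally identical (dualizing the mapping cone is the same computation), and both are valid.

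One inaccuracy worth flagging: your closing ``main obstacle'' analysis asserts that the self-duality $\mathcal N\cong\mathcal N^*$ is essential to the argument, but it is not. To conclude that $\Ext^{n_0}(\mathcal A,\mathcal R)\to\Ext^{n_0+1}(L,\mathcal R)$ is onto, you only need $\Ext^{n_0+1}(\mathcal N,\mathcal R)=0$, and that already follows from $\pd_{\mathcal R}\mathcal N=n_0$, i.e.\ from perfection alone; the identification $\Ext^{n_0}(\mathcal N,\mathcal R)\cong\mathcal N$ never enters the chase. The paper's mapping-cone version makes this transparent: the top Betti number of the resolution of $\mathcal N$ is irrelevant, since only the back module coming from $\mathcal A$'s resolution lands in degree $n_0+2$. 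So the only ``coincidence'' you actually need is the Gorenstein-ness of $I\mathcal R$; the appeal to Observation~\ref{yet-to-come}.\ref{yet.c} and the flat base change of $N\cong N^*$ can be deleted from the proof without loss.
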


\begin{proof} We employ the theory of generic perfection as described at the beginning of \ref{prove-dream}. It suffices to prove the result when $R_0$ is equal to the ring of integers and when $R_0$ is a field. In particular, we may assume that $R_0$ is a Cohen-Macaulay domain. Proposition~\ref{*.claim} guarantees that there exists an exact sequence of $\mathcal R$-modules 
$$0\to \mathcal A\to \mathcal N\to \mathcal A\to \mathcal R/J\to 0$$
and Corollary~\ref{carry forward} ensures that $\mathcal A$ and $\mathcal N$ have free resolutions of length $\binom{\goth f-2}2$; furthermore, the back Betti number in the resolution of $\mathcal A$ is one. Resolve $\mathcal A$ and $\mathcal N$ and form the iterated mapping cone in order to find a free resolution of $\mathcal R/J$ of length $\binom{\goth f-2}2+2$. The back Betti number in the resolution of $\mathcal R/J$ is one. We see that
 $$\textstyle \binom{\goth f-2}2+2\le \grade J\le \pd_{\mathcal R}\mathcal R/J\le \binom{\goth f-2}2+2.$$(The first inequality is Lemma~\ref{*.enough} and the second inequality is (\ref{auto}).) Thus, equality holds throughout and the proof is complete.
\end{proof}

\section{Consequences of the main result.}\label{consequences}

In this section, especially in Corollary~\ref{corollary}, we prove some consequences of the fact that $J$ is a perfect ideal in $\mathcal R$.
We begin by identifying some relations on the generators of $J$. These relations are used in the proof of Corollary~\ref{corollary}.\ref{cor-b} that $(\mathcal R/J)_{x_{i,j}}$ is a polynomial ring over $R_0[x_{i,j}\ ,\ x_{i,j}^{-1}]$.

\begin{definition}\label{5.1}Adopt the language of {\rm\ref{data2}} and {\rm\ref{Not2}}. Define the maps and modules 
$$\mathbb E_2\xrightarrow{D_2}\mathbb E_1\xrightarrow{D_1}\mathbb E_0$$
by \begingroup\allowdisplaybreaks\begin{align*}&\mathbb E_2= \begin{matrix}\bigwedge^3\mathcal F^*
\\\oplus\\ \ker\left(\mathcal F^*\otimes \bigwedge^5\mathcal F^*\xrightarrow{\operatorname{multiplication}}\bigwedge^6\mathcal F^*\right),
\\\oplus\\\bigwedge^3\mathcal F^*\otimes \bigwedge^3 \mathcal F^*\end{matrix}\quad\mathbb E_1= \begin{matrix}\mathcal F^*\\\oplus\\\bigwedge^4\mathcal F^*\end{matrix},\quad\mathbb E_0=\mathcal R,\\
&D_2\left(\bmatrix \phi_3\\0\\\phi_3'\otimes \phi_3'' \endbmatrix\right)=\bmatrix \xi(\phi_3)\hfill\\\tau\wedge \phi_3+\xi(\phi_3')\wedge \phi_3''-\phi_3'\wedge \xi(\phi_3'')\endbmatrix,\\&D_1\left(\bmatrix \phi_1\\\phi_4 \endbmatrix\right)=[\tau(\xi)](\phi_1)+\xi^{(2)}(\phi_4),\end{align*}\endgroup and the middle component of $D_2$ is induced by the map $F^*\otimes \bigwedge^5\mathcal F^*\to \bigwedge^4\mathcal F^*$ which sends $\phi_1\otimes \phi_5$ to $[\phi_1(\xi)](\phi_5)$. 
\end{definition}

\begin{observation}\label{D}The maps and modules of Definition~{\rm\ref{5.1}} form a complex and the image of $D_1$ is the ideal $J$ of {\rm\ref{Not2}}.\end{observation}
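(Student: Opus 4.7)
The plan is to verify the two assertions separately: that $\im D_1 = J$, and that $D_1\circ D_2 = 0$. The image statement is immediate from Notation~\ref{Not2}: the restriction $D_1|_{\mathcal F^*}$ equals the map $\tau(\xi)$, whose image is $K$ by definition, and $D_1|_{\bigwedge^4\mathcal F^*}$ equals the map $\xi^{(2)}$, whose image in $\mathcal R$ is $I\mathcal R$ because $I=\im(\xi^{(2)}\colon\bigwedge^4 F^*\to R)$ and extension of scalars preserves images. The sum of the two images is $K+I\mathcal R = J$.

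For the vanishing of $D_1\circ D_2$, I would check each of the three summands of $\mathbb E_2$, relying throughout on Observation~\ref{doo-8.2}, Proposition~\ref{A3}, the $\Gamma$-rule~(\ref{Gamma}), the compatibility~(\ref{compat}) of the two module actions, and graded anticommutativity. On $\bigwedge^3\mathcal F^*$ the composition is $[\tau(\xi)](\xi(\phi_3))+\xi^{(2)}(\tau\wedge\phi_3)$; rewriting the second term via compatibility and the multiplicativity of the action as $\tau(\phi_3(\xi^{(2)}))$, then applying Observation~\ref{doo-8.2}(a), gives $\tau([\xi(\phi_3)](\xi))$; sliding $\tau$ past $\xi(\phi_3)$ by graded anticommutativity produces $-[\tau(\xi)](\xi(\phi_3))$, which cancels the first term.

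On the kernel summand, for $z = \sum\phi_1\otimes\phi_5$ with $\sum\phi_1\wedge\phi_5 = 0$, Proposition~\ref{A3} with $f_1 = \phi_1(\xi)$, $\phi_q = \phi_5$, and $f_p = \xi^{(2)}$ splits $\xi^{(2)}([\phi_1(\xi)](\phi_5))$ into two terms: the first vanishes because $\phi_5(\xi^{(2)}) = 0$ for degree reasons, and the second, after using the $\Gamma$-rule to identify $\phi_1(\xi)\wedge\xi^{(2)}$ with $\phi_1(\xi^{(3)})$ and then multiplicativity of the action, becomes $\pm(\phi_1\wedge\phi_5)(\xi^{(3)})$; summing over the generators of $\ker$ yields zero. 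On $\bigwedge^3\mathcal F^*\otimes\bigwedge^3\mathcal F^*$ I would let $u = \phi_3''(\xi^{(2)}) = [\xi(\phi_3'')](\xi)\in\mathcal F$ and show that both $\xi^{(2)}(\xi(\phi_3')\wedge\phi_3'')$ and $\xi^{(2)}(\phi_3'\wedge\xi(\phi_3''))$ reduce to the common value $(u\wedge\xi)(\phi_3')$: the first unravels via compatibility, multiplicativity, and Observation~\ref{doo-8.2}(a) directly, while the second requires the $\Gamma$-rule $\xi(\phi_3'')(\xi^{(2)}) = u\wedge\xi$ combined with Proposition~\ref{A3} (whose extra term vanishes for degree reasons), and one concludes via the graded-commutativity identity $u\wedge\xi = \xi\wedge u$ (odd degree $1$ against even degree $2$).

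The main obstacle is meticulous sign bookkeeping through the exterior-algebra manipulations; the substantive content is that the three Pfaffian identities encoded in Observation~\ref{doo-8.2}(a), Proposition~\ref{A3}, and the $\Gamma$-rule are exactly what is needed for each composition to collapse to zero.
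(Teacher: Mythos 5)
Your proposal is correct and follows essentially the same route as the paper: the image claim is read off directly from the two components of $D_1$ together with Notation~\ref{Not2}, and the vanishing $D_1\circ D_2 = 0$ is verified summand by summand using Observation~\ref{doo-8.2}.\ref{doo-8.2.a}, the $\Gamma$-rule~(\ref{Gamma}), compatibility~(\ref{compat}), and the graded module actions. The only cosmetic differences are that you invoke Proposition~\ref{A3} explicitly on the kernel summand (where the paper rearranges directly via the module structure and the identity $\phi_1(\xi^{(3)}) = \phi_1(\xi)\wedge\xi^{(2)}$), and on the third summand you reduce both terms to a common value via $u = \phi_3''(\xi^{(2)})$ rather than the paper's $\phi_3'(\xi^{(2)})$; both variants are valid.
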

\begin{proof}We verify that $D_1\circ D_2=0$. We use (\ref{Gamma}), (\ref{compat}), Observation~\ref{doo-8.2}.\ref{doo-8.2.a}, and the module action of $\bigwedge^{\bullet}\mathcal F$ and $\bigwedge^{\bullet}\mathcal F^*$ on one another to compute 
\begingroup\allowdisplaybreaks\begin{align*}(D_1\circ D_2)(\phi_3)&=D_1\left(\bmatrix \xi(\phi_3)\\\tau\wedge \phi_3\endbmatrix\right)=[\tau(\xi)](\xi(\phi_3))+\xi^{(2)}(\tau\wedge \phi_3)\\&=
[\tau(\xi^{(2)})](\phi_3)-(\phi_3\wedge \tau)(\xi^{(2)})=0, \\
(D_1\circ D_2)(\sum_i\phi_{1,i}\otimes \phi_{5,i})&=\sum_i D_1([\phi_{1,i}(\xi)](\phi_{5,i}))=\sum_i \xi^{(2)}([\phi_{1,i}(\xi)](\phi_{5,i}))\\
&=\sum_i [\phi_{1,i}(\xi^{(3)})](\phi_{5,i})=(\sum_i\phi_{5,i} \wedge \phi_{1,i})(\xi^{(3)})=0, \quad\text{and}\\
(D_1\circ D_2)(\phi_3'\otimes \phi_3'')&=D_1\big(\xi(\phi_3')\wedge \phi_3''-\phi_3'\wedge \xi(\phi_3'')\big)=\xi^{(2)}\big(\xi(\phi_3')\wedge \phi_3''-\phi_3'\wedge \xi(\phi_3'')\big)\\&=\xi^{(2)}\big(\xi(\phi_3')\wedge \phi_3''\big)-\xi^{(2)}\big(\phi_3'\wedge \xi(\phi_3'')\big).
\\\intertext{Furthermore, we compute}
\xi^{(2)}\big(\xi(\phi_3')\wedge \phi_3''\big)&=-[\phi_3''\wedge \xi(\phi_3')](\xi^{(2)})=-\phi_3''\big([\xi(\phi_3')](\xi^{(2)})\big)=-\phi_3''\big([\xi(\phi_3')](\xi)\wedge \xi\big)\\&=-\phi_3''\big(
\phi_3'(\xi^{(2)})\wedge \xi\big)=-[\phi_3'(\xi^{(2)})](\xi(\phi_3''))=
-[\xi(\phi_3'')\wedge \phi_3'](\xi^{(2)})\\
&=[\phi_3'\wedge \xi(\phi_3'')](\xi^{(2)})=\xi^{(2)}[\phi_3'\wedge \xi(\phi_3'')];
\end{align*}\endgroup
and therefore, $(D_1\circ D_2)(\phi_3'\otimes \phi_3'')=0$.
\end{proof} 

\begin{corollary}\label{corollary}Adopt the language of {\rm\ref{data2}},  {\rm \ref{Not2}}, and {\rm \ref{R2}}. Assume that  $4\le \goth f$ and $R_0$ is an arbitrary commutative Noetherian ring. The following statements hold.
\begin{enumerate}[\rm(a)]
\item\label{cor-a} The elements $x_{1,2}\ ,\ x_{1,3}$ form a regular sequence 
on $\mathcal R/J$.
\item\label{cor-b} For each pair $i,j$ with $1\le i<j\le \goth f$, the localization of the ring $\mathcal R/J$ at the element $x_{i,j}$ is isomorphic to a polynomial ring over $R_0[x_{i,j}\ ,\ x_{i,j}^{-1}]$.
\item\label{cor-c} The ring $\mathcal R/J$ is a domain if and only if $R_0$ is a domain. 
\item\label{cor-c.5} If $R_0$ is a domain, then $(x_{1,2})\mathcal R/J$ is a prime ideal in $\mathcal R/J$.
\item\label{cor-d} The ring $\mathcal R/J$ is normal if and only if $R_0$ is normal. 
\item \label{cor-e} If $R_0$ is a normal domain, then the divisor class group of $R_0$ is isomorphic to the divisor class group of $\mathcal R/J$. In particular, $R_0$ is a unique factorization domain if and only if $\mathcal R/J$ is a unique factorization domain. \end{enumerate}
\end{corollary}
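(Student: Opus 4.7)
The plan is to prove the six parts in sequence, (a) $\to$ (b) $\to$ (c) $\to$ (d) $\to$ (e) $\to$ (f), with each part feeding the next.

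For part~\textbf{(a)}, I would first reduce to $R_0$ Cohen-Macaulay via generic perfection (Theorem~\ref{PoPP}); then Theorem~\ref{main-Theorem} gives that $\mathcal R/J$ is Cohen-Macaulay, so it suffices to show $\htt(J+(x_{1,2},x_{1,3}))\ge\binom{\goth f-2}2+4$ in $\mathcal R$. I would mimic the case analysis of Corollary~\ref{KL-consq}: for any prime $P\supseteq J+(x_{1,2},x_{1,3})$, the Pfaffian $x_{1,2}x_{3,j}-x_{1,3}x_{2,j}+x_{1,j}x_{2,3}\in I\subseteq P$ forces $x_{2,3}x_{1,j}\in P$, so either $x_{2,3}\in P$ (giving $I_3\mathcal R+K\subseteq P$) or every $x_{1,j}\in P$ (giving that $P$ contains $K$ together with $\Pf_4(\mathbf X')\mathcal R$, where $\mathbf X'$ is $\mathbf X$ with row and column one deleted). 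In each case one combines Lemma~\ref{KL} with grade estimates for the entries of $\mathbf{tX}$ to reach the required bound. This I expect to be the main obstacle, because the coupling of $x$- and $t$-variables in $K$ makes the height estimates more delicate than the pure-$x$ situation of Corollary~\ref{KL-consq}.

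For part~\textbf{(b)}, I would invert $x_{1,2}$ and identify $(\mathcal R/J)_{x_{1,2}}$ explicitly. The Pfaffians $x_{1,2}x_{i,j}-x_{1,i}x_{2,j}+x_{1,j}x_{2,i}\in I$, for $3\le i<j\le\goth f$, solve each $x_{i,j}$ in terms of $x_{1,k}$ and $x_{2,k}$ divided by $x_{1,2}$; the entries of $\mathbf{tX}$ in columns $1$ and $2$ solve for $t_2$ and $t_1$; and a bi-linear calculation shows the remaining entries of $\mathbf{tX}$ become identically zero after these substitutions. The grade from Theorem~\ref{main-Theorem} pins down the Krull dimension and forces algebraic independence of $\{x_{1,2}^{\pm1}\}\cup\{x_{1,k},x_{2,k}\mid k\ge 3\}\cup\{t_i\mid i\ge 3\}$ over $R_0$. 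Relabeling indices handles the general $x_{i,j}$.

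Parts~\textbf{(c)} and~\textbf{(d)} follow by combining (a) and (b). For (c), the forward direction holds because $R_0$ embeds in $\mathcal R/J$ as the bi-degree $(0,0)$ component ($J$ is generated in positive bi-degree); the reverse uses (a) to make $x_{1,2}$ a non-zero-divisor, so $\mathcal R/J$ embeds in $(\mathcal R/J)_{x_{1,2}}$, which by (b) is a polynomial ring over the domain $R_0[x_{1,2}^{\pm1}]$. For (d), the analog of (b) at $x_{1,3}$ exhibits $(\mathcal R/J)_{x_{1,3}}$ as a polynomial ring over $R_0[x_{1,3}^{\pm1}]$ with $x_{1,2}$ among its free generators, so $(\mathcal R/(J,x_{1,2}))_{x_{1,3}}$ is again a polynomial ring over $R_0[x_{1,3}^{\pm1}]$ and hence a domain; since (a) gives that $x_{1,3}$ is a non-zero-divisor on $\mathcal R/(J,x_{1,2})$, that ring embeds into this domain.

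For part~\textbf{(e)}, generic perfection makes $\mathcal R/J$ flat over $R_0$, and the fibers $(\mathcal R/J)\otimes_{R_0}k(p)$ are normal domains (Cohen-Macaulay by Theorem~\ref{main-Theorem} applied to the field base $k(p)$, and regular in codimension one because every height-one prime misses some $x_{i,j}$ and the corresponding localization is a polynomial ring via (b)); normality then ascends along the flat map with normal fibers and descends along the faithfully flat inclusion $R_0\hookrightarrow\mathcal R/J$. Finally, for part~\textbf{(f)}, Nagata's sequence $\mathbb Z\cdot[(x_{1,2})\mathcal R/J]\to\Cl(\mathcal R/J)\to\Cl((\mathcal R/J)_{x_{1,2}})\to 0$ is exact; (d) makes $(x_{1,2})\mathcal R/J$ a principal prime so the left term vanishes, and (b) identifies $\Cl((\mathcal R/J)_{x_{1,2}})$ with $\Cl(R_0[x_{1,2}^{\pm1}])=\Cl(R_0)$, yielding $\Cl(\mathcal R/J)\cong\Cl(R_0)$ and the UFD consequence.
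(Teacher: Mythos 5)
Your outline reproduces the overall architecture of the paper's proof, and parts (c), (cor-c.5), and (cor-e) are essentially the argument the paper uses, but there are two places that deserve comment: a genuine gap in~(a), and a legitimate alternative in~(cor-d).

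\textbf{Part (a): a genuine gap.} You split on whether $x_{2,3}\in P$ or all $x_{1,j}\in P$, in analogy with Corollary~\ref{KL-consq}. The branch ``all $x_{1,j}\in P$'' does work: modulo $(x_{1,2},\dots,x_{1,\goth f})$ the ideal $J$ descends to $\Pf_4(\mathbf X')+I_1(\mathbf t'\mathbf X')$ for the $(\goth f-1)\times(\goth f-1)$ matrix $\mathbf X'$, and Lemma~\ref{*.enough} in size $\goth f-1$ combined with the $\goth f-1$ variables $x_{1,j}$ gives $\binom{\goth f-3}2+2+(\goth f-1)=\binom{\goth f-2}2+4$. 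But the branch ``$x_{2,3}\in P$'', which gives $P\supseteq I_3\mathcal R+K$, is the part you flagged as the main obstacle and then left unresolved: $\grade I_3\mathcal R=\binom{\goth f-2}2+2$, and you need the entries of $\mathbf{tX}$ to contribute two \emph{more} to the grade beyond $I_3\mathcal R$. There is no clean appeal to Lemma~\ref{KL} here; the entries of $\mathbf{tX}$ are not readily seen to form part of a regular sequence modulo $I_3$, and one ends up needing a further case split on the $t$-variables inside this branch. The paper instead cases from the outset on whether $t_{\goth f}\notin P$ (and similarly $t_{\goth f-1}$). Inverting $t_{\goth f}$ turns the entries $(\mathbf{tX})_1,\dots,(\mathbf{tX})_{\goth f-1}$ into honest polynomial indeterminates over the ring $R_0[t_1,\dots,t_{\goth f},t_{\goth f}^{-1},\{x_{i,j}\mid i<j\le\goth f-1\}]$, after which the $t$-contribution is free and one only needs Corollary~\ref{KL-consq} applied to the smaller alternating matrix $\mathbf X'$. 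You should adopt that device; without it your branch~1 does not close.

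\textbf{Part (b).} Your substitution picture is the right one, but ``the grade from Theorem~\ref{main-Theorem} pins down the Krull dimension and forces algebraic independence'' needs care: as stated it compares a surjection of rings of equal dimension, which forces injectivity only once one knows both rings are domains of the same finite dimension --- and the domain property of $\mathcal R/J$ is not yet available at this point (it is part~(c)). You would have to route through generic perfection to reduce to $R_0$ a field, and even then one should verify that \emph{all} generators of $J$ (not just the remaining entries of $\mathbf{tX}$, but also the Pfaffians indexed entirely in $\{3,\dots,\goth f\}$) lie in the ideal generated by the substitution relations. The paper avoids the dimension count entirely by proving the containment $J\mathcal R[x_{1,2}^{-1}]\subseteq (S_2)\mathcal R[x_{1,2}^{-1}]$ directly, via the relations encoded in the complex of Definition~\ref{5.1} and the chain of inclusions (\ref{near}); this works over arbitrary Noetherian $R_0$ with no reduction step.

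\textbf{Part (cor-d), normality.} Your argument --- $\mathcal R/J$ is $R_0$-flat by generic perfection, the fibers over $k(p)$ are normal domains (CM by Theorem~\ref{main-Theorem}, R1 because height-one primes miss one of $x_{1,2},x_{1,3}$ by~(a) and the localization is regular by~(b)), and normality ascends along flat maps with normal fibers and descends along the faithfully flat inclusion $R_0\hookrightarrow\mathcal R/J$ --- is correct and is a genuinely different route. The paper instead checks Serre's criterion directly on $\mathcal R/J$ using (a) and (b) for the implication $(\Leftarrow)$, and uses the normality transfer criterion of Lemma~\ref{BV} for $(\Rightarrow)$. Your approach trades the explicit Lemma~\ref{BV} for the standard fiberwise normality theorem; both are reasonable, and yours has the small advantage of not requiring a separate base-change lemma, while the paper's is more self-contained within the elementary framework it has set up.

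\textbf{Part (cor-e).} Your description of Nagata's sequence has a small infelicity (it is the image of $\mathbb Z\cdot[(x_{1,2})]$ in $\Cl(\mathcal R/J)$ that vanishes because $(x_{1,2})$ is principal, not the group $\mathbb Z\cdot[(x_{1,2})]$ itself), but the conclusion and chain of isomorphisms agree with the paper's.
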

\begin{proof}(\ref{cor-a}) 
We employ the method of proof that is described in Corollary~\ref{KL-consq}. It suffices to show that 
\begin{align*}
&\textstyle \binom{\goth f-2}2+3\le \grade PR_P&&\text{for all $P\in \Spec \mathcal R$ with $J+(x_{1,2})\subseteq P$}&&\text{situation 1, and}\\
&\textstyle\binom{\goth f-2}2+4\le \grade PR_P&&\text{for all $P\in \Spec \mathcal R$ with $J+(x_{1,2}\ ,\ x_{1,3})$}\subseteq P&&\text{situation 2}\\
\end{align*}
Fix a prime $P$ from situation 1 or situation 2.
There are two cases. Assume first that $t_{\goth f}\notin P$. The ring $\mathcal R_{\ P}$ is a localization of $\mathcal R_{\ t_f}$ and $\mathcal R_{\ t_f}$ is equal to the polynomial ring $$(R_0[t_1\ ,\ \dots\ ,\ t_\goth f\ ,\ t_\goth f^{-1},\{x_{i,j}\mid 1\le i<j\le \goth f-1\}])[(\mathbf{tX})_1\ ,\ \dots\ ,\ (\mathbf{tX})_{\goth f-1}].$$
Let $\mathbf X'$ represent $\mathbf X$ with row and column $\goth f$ deleted. Apply Corollary ~\ref{KL-consq}. In situation 1,
the ideal $(x_{1,2},J)\mathcal R_{\ t_\goth f}$ contains the $\grade \binom{\goth f-3}2+1$ ideal $(x_{1,2}\ ,\ \Pf_{4}(\mathbf X')$ of  \begin{equation}\label{ring}R_0[t_1\ ,\ \dots\ ,\ t_\goth f\ ,\ t_\goth f^{-1}\ ,\ \{x_{i,j}\mid 1\le i<j\le \goth f-1\}]\end{equation} as well as the $\goth f-1$ indeterminates $(\mathbf{tX})_1,\dots,(\mathbf{tX})_{\goth f-1}$. Thus, 
$$\textstyle \binom{\goth f-2}2+3=\Big(\binom{\goth f-3}2+1\Big)+(\goth f-1)\le \grade(x_{1,2}\ ,\ J)\mathcal R_{\ t_\goth f}.$$Similarly, in situation 2,  Corollary ~\ref{KL-consq} guarantees that 
$$\textstyle \binom{\goth f-3}2+2\le \grade (x_{1,2}\ ,\ x_{1,3}\ ,\ \Pf_{4}(\mathbf X'))\cdot \text{(\ref{ring})};$$so, $\binom{\goth f-2}2+4\le \grade (x_{1,2}\ ,\ x_{1,3}\ ,\ J)\mathcal R_{\ t_{\goth f}}$. 
 The same argument works if $t_{\goth f-1}\notin P$. The second case is $t_{\goth f}$ and $t_{\goth f-1}$ are both in $P$.  In this case, Corollary~\ref{KL-consq} yields
\begin{align*} &(t_{\goth f}\ ,\ t_{\goth f-1})+\Pf_4(\mathbf X)+(x_{1,2})\subseteq P&&\textstyle\text{and  $\binom{\goth f-2}2+3\le \grade P$}&&\text{in situation 1, and}\\
&(t_{\goth f}\ ,\ t_{\goth f-1})+\Pf_4(\mathbf X)+(x_{1,2}\ ,\ x_{1,3})\subseteq P&&\textstyle\text{and  $\binom{\goth f-2}2+4\le \grade P$}&&\text{in situation 2}
.\end{align*}  

\medskip\noindent(\ref{cor-b}) It is notationally convenient 
to  
prove the result for $(i,j)=(1,2)$. 
Let $S_1$ and $S_2$ be the following subsets of $\mathcal R$:
\begin{align}S_1=&\{x_{i,j}|1\le i\le 2\ ,\ 3\le j\le \goth f\}\cup \{t_j\mid 3\le j\le \goth f\}\quad\text{and} \label{Ssub1}\\S_2=&\textstyle\{x_{1,2}x_{i,j}-x_{1,i}x_{2,j}+x_{1,j}x_{2,i}\mid 3\le i<j\le \goth f\}\notag\\&\cup \Big\{x_{1,2}t_2+\sum\limits_{j=3}^{\goth f} x_{1,j}t_j\ ,\ x_{1,2}t_1-\sum\limits_{j=3}^{\goth f} x_{2,j}t_j\Big\}.\notag\end{align}

\noindent Notice that \begin{enumerate}[\rm(A)]
\item\label{cor-A} $S_1\cup S_2$ is a set of indeterminates over the ring $R_0[x_{1,2}\ ,\ x_{1,2}^{-1}]$,
\item\label{cor-B} $\big(R_0[x_{1,2}\ ,\ x_{1,2}^{-1}]\big)[S_1\cup S_2]=\mathcal R[x_{1,2}^{-1}]$, and
\item\label{cor-C} $J\mathcal R[x_{1,2}^{-1}]=(S_2)\mathcal R[x_{1,2}^{-1}]$. \end{enumerate}
Assertions (\ref{cor-A}) and (\ref{cor-B}) are obvious. 
Once  (\ref{cor-C}) is  established, we will have shown that
\begin{equation}\label{VARS} \begin{array}{l}\text{$(\mathcal R/J)_{x_{1,2}}$ is the polynomial ring }R_0[x_{1,2}\ ,\ x_{1,2}^{-1}][S_1]\text{ over $R_0[x_{1,2}\ ,\ x_{1,2}^{-1}]$}\vspace{5pt}\\\text{for $S_1$ given in (\ref{Ssub1})}.\end{array}\end{equation} 
We now prove (\ref{cor-C}). Observe first that $S_2\subset J$. Indeed, in the language of Observation~\ref{D} and Remark~\ref{R2}, the ideal $S_2\mathcal R$ is the image, under $D_1$, of the submodule $$\textstyle W=\mathcal R e_1^*\oplus \mathcal R e_2^*\oplus
\mathcal R (e_1^*\wedge e_2^*)\wedge \bigwedge^2 \mathcal F^*$$ of $\mathbb E_1$. We show that 
\begin{equation}\label{near}
\begin{array}{rcl}
x_{1,2} \mathcal F^* &\subseteq& W+\im D_2\\
x_{1,2} \mathcal R(e_1^*\ ,\ e_2^*)\wedge \bigwedge^3\mathcal F^* &\subseteq& W+\im D_2,\quad \text{and}\\
x_{1,2}\bigwedge^4\mathcal F^*&\subseteq& W+\mathcal R(e_1^*\ ,\ e_2^*)\wedge \bigwedge^3\mathcal F^*+\im D_2. \end{array}\end{equation}
Once (\ref{near}) is established, then iteration of (\ref{near}) gives $x_{1,2}^2\mathbb E_1\subseteq W+\im D_2$; hence, $x_{1,2}^2J$ is contained in $S_2 \mathcal R$ and (\ref{cor-C}) holds. 

If $\phi_1\in \mathcal F^*$, then use Observation~\ref{doo-8.2}.\ref{doo-8.2.b} to see that
\begin{align*}x_{1,2}\phi_1&=\xi(e_2^*\wedge e_1^*)\cdot \phi_1=\xi(\phi_1\wedge e_2^*\wedge e_1^*)+\xi(\phi_1\wedge e_1^*)\cdot  e_2^*-\xi(\phi_1\wedge e_2^*)\cdot  e_1^*\\
&=D_2(\phi_1\wedge e_2^*\wedge e_1^*)-\tau\wedge \phi_1\wedge e_2^*\wedge e_1^*+\xi(\phi_1\wedge e_1^*)\cdot  e_2^*-\xi(\phi_1\wedge e_2^*)\cdot  e_1^*
\in W+\im D_2.\end{align*}If $\phi_3\in \bigwedge^3\mathcal F^*$, then
\begin{align*}&x_{1,2}e_1^*\wedge \phi_3=\xi(e_2^*\wedge e_1^*)\cdot e_1^*\wedge \phi_3
=[e_1^*(\xi)](e_2^*\wedge e_1^*\wedge \phi_3)+\text{an element of $W$}\\
=&D_2(e_1^*\otimes e_2^*\wedge e_1^*\wedge \phi_3)+\text{an element of $W$}\in W+\im D_2.
\end{align*}The calculation $x_{1,2}e_2^*\wedge \phi_3\in W+\im D_2$ is similar.

\noindent
If $\phi_1\in \mathcal F^*$ and $\phi_3\in \bigwedge^3\mathcal F^*$, then \begingroup\allowdisplaybreaks
\begin{align*}{}&x_{1,2}\phi_1\wedge\phi_3=\xi(e_2^*\wedge e_1^*)\cdot \phi_1\wedge\phi_3\\
{}=&\xi(\phi_1\wedge e_2^*\wedge e_1^*)\wedge \phi_3
+\xi(\phi_1\wedge e_1^*)\cdot e_2^*\wedge\phi_3
-\xi(\phi_1\wedge e_2^*)\cdot e_1^*\wedge\phi_3\\
{}=&D_2((\phi_1\wedge e_2^*\wedge e_1^*)\otimes  \phi_3)
+\phi_1\wedge e_2^*\wedge e_1^*\wedge  \xi(\phi_3)
+ \text{an element of $\mathcal R(e_1^*\ ,\ e_2^*)\wedge \textstyle\bigwedge^3\mathcal F^*$}\\
{}\in&W+\mathcal R(e_1^*\ ,\ e_2^*)\wedge \textstyle\bigwedge^3\mathcal F^*+\im D_2.\end{align*}\endgroup This completes the proof of (\ref{near}) and hence the proof of (\ref{cor-b}).

\medskip\noindent(\ref{cor-c}) Apply (\ref{cor-a}) and then (\ref{cor-b}) to see that $\mathcal R/J$ is a domain if and only if $(\mathcal R/J)_{x_{1,2}}$ is a domain if and only if $R_0$ is a domain.

\medskip\noindent(\ref{cor-c.5}) Suppose $\alpha$ and $\beta$ are elements of $\mathcal R/J$ with $\alpha\beta\in (x_{1,2})\cdot \mathcal R/J$. We know from (\ref{cor-b}) that $(x_{1,2})\cdot (\mathcal R/J)_{x_{1,3}}$ is a prime ideal; so, one of the elements $\alpha$ or $\beta$ (say, $\alpha$) is in $(x_{1,2})\cdot (\mathcal R/J)_{x_{1,3}}$. It follows that $x_{1,3}^s\alpha\in  (x_{1,2})\cdot \mathcal R/J$, for some $s$. Apply (\ref{cor-a}) to see that $\alpha$ is in $(x_{1,2})\cdot \mathcal R/J$.

\medskip\noindent(\ref{cor-d}) ($\Leftarrow$) We apply the Serre criteria for normality in order to prove that $\mathcal R/J$ is normal. It suffices to prove that $(\mathcal R/J)_P$ is normal for all primes $P$ with $\depth(\mathcal R/J)_P\le 1$. If $\depth (\mathcal R/J)_P\le 1$, then (\ref{cor-a}) guarantees that at least one of the elements  $x_{1,2}$ or $x_{1,3}$ is not in $P$. Thus, we know from (\ref{cor-b}) that $(\mathcal R/J)_P$ is a localization of a polynomial ring over $R_0$; hence,  $(\mathcal R/J)_P$ is a normal domain. 

\medskip\noindent(\ref{cor-d}) ($\Rightarrow$) The hypothesis that $\mathcal R/J$ is normal guarantees that $\mathcal R/J$ is reduced; and therefore, $R_0$ is reduced. The localization $(\mathcal R/J)_{x_{1,2}}$ is also normal. Recall from 
(\ref{VARS}) that $(\mathcal R/J)_{x_{1,2}}$ is equal to $T[x_{1,2}^{-1}]$ where $T$ is the polynomial ring $R_0[x_{1,2}\ ,\ S_1]$ and $S_1$ is the list of indeterminates given in (\ref{Ssub1}). 
 Apply Lemma~\ref{BV}, with  $y=x_{1,2}$, to conclude  that $T$ is normal. Now a standard argument yields that $R_0$ is also normal.

\medskip\noindent(\ref{cor-e}) Avramov's proof \cite{A79} that $R/\Pf_{2t}(\mathbf X)$ is a unique factorization domain may be applied without change. In other words, there are isomorphisms of the following divisor class groups:
$$\xymatrix{\Cl(\mathcal R/J)\ar[r]^(.45){\alpha}& \Cl((\mathcal R/J)_{x_{1,2}})\ar[r]^(.48){\beta}& \Cl(R_0[S_1\ ,\ x_{1,2}^{-1}])
\ar@{<-}[r]^(.54){\gamma}
&
\Cl(R_0[S_1])\ar@{<-}[r]^(.54){\delta}&
\Cl(R_0).}$$ The element $x_{1,2}$ generates a prime ideal in  $\mathcal R/J$ by (\ref{cor-c.5}); so the isomorphism $\alpha$ is Nagata's Lemma \cite[Cor.~7.3]{F73}. We proved in (\ref{VARS}) that $(\mathcal R/J)_{x_{1,2}}$ is equal to the polynomial ring $R_0[S_1\ ,\ x_{1,2}^{-1}]$, where $S_1$ is the list of indeterminates given in (\ref{Ssub1}); so the isomorphism $\beta$ is the identity map. The isomorphism $\gamma$ is again Nagata's Lemma and the isomorphism $\delta$ is Gauss' Lemma \cite[Thm.~8.1]{F73}.
\end{proof}

We have used the following normality criterion which appears as \cite[Lemma~16.24]{BV}. The result follows quickly from Serre's normality criterion. \begin{lemma}\label{BV} Let $T$ be a Noetherian ring, and $y$ be a regular element of  $T$ such that  
$T/T y$ is reduced and $T[y^{-1}]$ is a normal ring. Then $T$ is a normal ring.
\end{lemma}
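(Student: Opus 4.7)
The plan is to prove normality via Serre's criterion: a Noetherian ring is normal precisely when it satisfies $(R_1)$ (regular in codimension one) and $(S_2)$. So I would check these two conditions for $T$, splitting cases according to whether the prime at which we localize contains $y$.

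First I would verify $(R_1)$. Let $P$ be a prime of $T$ with $\htt P\le 1$. If $y\notin P$, then $T_P$ is a localization of $T[y^{-1}]$; since the latter is normal, so is $T_P$, and in particular $T_P$ is regular when $\dim T_P\le 1$. If $y\in P$ and $\htt P=1$, the key input is the combination of the two hypotheses: since $y$ is regular in $T$, Krull's principal ideal theorem forces $P$ to be a minimal prime of $(y)$, and since $T/Ty$ is reduced, every such minimal prime is an associated prime where the localization $(T/Ty)_P=T_P/yT_P$ is a field. Hence $PT_P=yT_P$ is principal with a regular generator, making $T_P$ a one-dimensional regular local ring (in particular a DVR). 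This handles $(R_1)$.

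Next I would verify $(S_2)$: for every prime $P$ I need $\depth T_P\ge \min(2,\dim T_P)$. The case $y\notin P$ is immediate since $T_P$ is a localization of the normal ring $T[y^{-1}]$, which already satisfies $(S_2)$. If $y\in P$ and $\dim T_P\ge 2$, then because $T/Ty$ is reduced all its associated primes are minimal of height one in $T$, so $PT_P$, having height at least two, is not among them. Consequently some element $z\in PT_P$ is a non-zero-divisor modulo $yT_P$, giving a regular sequence $y,z$ of length two in $PT_P$ and hence $\depth T_P\ge 2$. The remaining sub-cases ($\dim T_P\le 1$ with $y\in P$) reduce to what was already shown for $(R_1)$.

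Having established $(R_1)$ and $(S_2)$, Serre's criterion yields that $T$ is normal. The main delicate point is the step for $(R_1)$ at height-one primes containing $y$, where one must correctly combine ``$y$ regular'' and ``$T/Ty$ reduced'' to conclude that $PT_P$ is principally generated by a non-zero-divisor, thereby upgrading the one-dimensional local ring $T_P$ from merely Cohen-Macaulay to regular. Once this is done, $(S_2)$ is essentially formal from the absence of embedded primes in $T/Ty$.
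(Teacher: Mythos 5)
Your proof is correct and takes precisely the route the paper indicates: the paper only cites \cite[Lemma~16.24]{BV} and remarks that the result ``follows quickly from Serre's normality criterion,'' and your argument verifying $(R_1)$ and $(S_2)$ by splitting on whether $y\in P$ is exactly the standard proof behind that citation, carried out in full detail.
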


Now that we know that $J$ is a perfect ideal, we are able to improve some of the results that we used in order to prove that $J$ is perfect. Notice that there are no hypotheses on the ring $R_0$.

\begin{proposition}\label{improved} Adopt the language of {\rm\ref{data2}}, {\rm\ref{Not2}} and {\rm\ref{R2}}. Let $R_0$ be an arbitrary commutative Noetherian ring.  \begin{enumerate}[\rm(a)]

\item\label{improved.a}
The maps and modules of {\rm(\ref{*.claim.1})} form an exact sequence.

\item\label{improved.b} 
The maps and modules of {\rm(\ref{exact-seq.1})} form an exact sequence. 

\item\label{improved.X} The ideal $J\mathcal A_{\,x_{1,2}}$ is generated by the regular sequence $(\mathbf{tX})_1$, $(\mathbf{tX})_2$.

\item\label{improved.Y} The element $x_{1,2}$ of $R$ is regular on both $A$ and $N$ and $N_{x_{1,2}}\cong A_{x_{1,2}}\oplus A_{x_{1,2}}$.
\end{enumerate}
\end{proposition}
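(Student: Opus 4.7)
The plan is to derive (a) and (b) from their known cases over a Cohen-Macaulay domain by a Persistence-of-Perfection argument (Theorem~\ref{PoPP}), and then to extract (c) and (d) by inverting $x_{1,2}$. For (a): Proposition~\ref{*.claim} supplies exactness of (\ref{*.claim.1}) when $R_0=\mathbb Z$. Choose free $\mathcal R$-resolutions of $\mathcal A$ and $\mathcal N$ of length $\binom{\goth f-2}2$ (Corollary~\ref{carry forward}), lift the two internal maps of (\ref{*.claim.1}) to chain maps, and form the associated double complex; its total complex $\mathbb P$ is a free resolution of $\mathcal R/J$ over the $\mathbb Z$-version of $\mathcal R$, of length $\binom{\goth f-2}2+2$. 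For an arbitrary $R_0$, Theorem~\ref{main-Theorem} and Corollary~\ref{carry forward} say that $\mathcal A$, $\mathcal N$, and $\mathcal R/J$ all remain perfect with the same projective dimensions; Persistence of Perfection therefore guarantees that after base change the columns of the double complex still resolve $\mathcal A$ and $\mathcal N$, and that $\mathbb P$ still resolves $\mathcal R/J$. The spectral sequence of the base-changed double complex obtained by taking vertical homology first collapses at $E_2$ to the row whose $p$-th term is the homology of the base-changed (\ref{*.claim.1}) at position $p$; since this converges to $\mathcal R/J$ concentrated in degree zero, the row vanishes off the cokernel spot, proving (a).

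For (b), run the analogous construction on (\ref{exact-seq.1}). Its four terms $N$, $A^3$, $A$, $A'$ are all $R$-perfect (Lemma~\ref{KL} and Lemma~\ref{main-Dream-Lemma}), and exactness over $\mathbb Z$ is Lemma~\ref{exact-seq}; the iterated mapping cone of their resolutions is a free resolution of $A'$. Since Lemma~\ref{KL}.\ref{KL.a} guarantees that $A'$ remains perfect of grade $\binom{\goth f-2}2+2$ for every $R_0$, Persistence of Perfection together with the same spectral-sequence comparison yields exactness of (\ref{exact-seq.1}) in general.

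For (d), Corollary~\ref{KL-consq} shows $x_{1,2}$ is regular on $A$, hence on $A^3$; the injection $N\hookrightarrow A^3$ produced by (b) transfers regularity to $N$. Inverting $x_{1,2}$ in (\ref{exact-seq.1}) annihilates $A'$ (since $x_{1,2}\in I_3$) and splits the map $A_{x_{1,2}}^3\to A_{x_{1,2}}$ through its unit third entry, so its kernel is free of rank two; hence $N_{x_{1,2}}\cong A_{x_{1,2}}\oplus A_{x_{1,2}}$. For (c), tensor with $\mathcal R$ to obtain $\mathcal N_{\,x_{1,2}}\cong\mathcal A_{\,x_{1,2}}^{\oplus 2}$ via the classes of $e_1^*,e_2^*$; the localized map $\tau(\xi)$ of (\ref{*.claim.1}) sends these generators to $[\tau(\xi)](e_i^*)=(\mathbf{tX})_i$ for $i=1,2$, and Observation~\ref{doo-8.2}.\ref{doo-8.2.b} identifies the image of the localized $\tau$ with their Koszul relation. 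Exactness of (\ref{*.claim.1}) from (a) therefore presents the localized sequence as the augmented Koszul complex on $(\mathbf{tX})_1,(\mathbf{tX})_2$, so these elements generate $J\mathcal A_{\,x_{1,2}}$ and form a regular sequence on $\mathcal A_{\,x_{1,2}}$.

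The main technical step is the Persistence-of-Perfection plus spectral-sequence bookkeeping underlying (a) and (b); once that is in place, (c) and (d) reduce to routine localization computations, and no hypothesis on $R_0$ beyond commutative Noetherian is needed anywhere.
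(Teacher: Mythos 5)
Your proposal is correct, but parts (a) and (b) take a genuinely different route from the paper.  For those two parts the paper argues via flatness over $\mathbb{Z}$: since $\mathcal A_{\mathbb Z}$, $\mathcal N_{\mathbb Z}$, $\mathcal R_{\mathbb Z}/J_{\mathbb Z}$ (and, for (b), $A_{\mathbb Z}$, $N_{\mathbb Z}$, $A'_{\mathbb Z}$) are generically perfect, they are flat $\mathbb Z$-modules; breaking the known exact sequences over $\mathbb Z$ into short exact sequences and applying $R_0\otimes_{\mathbb Z}-$ then preserves exactness with no spectral sequence needed.  Your route --- resolve each term, form the iterated mapping cone, invoke Persistence of Perfection on $\mathcal A$, $\mathcal N$, and $\mathcal R/J$ separately (using Corollary~\ref{carry forward} and Theorem~\ref{main-Theorem} to supply the required grade equalities for arbitrary $R_0$), and then compare via the first-quadrant spectral sequence of the base-changed bicomplex --- is also valid; it buys nothing extra here and costs a little bookkeeping, but it is a legitimate and instructive alternative.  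For (d) your argument $\Ass N\subseteq\Ass A^3=\Ass A$ via the embedding from (b) is if anything cleaner than the paper's comparison of grades of annihilators of perfect modules.  For (c) the paper simply cites the explicit presentation of $\mathcal A_{\,x_{1,2}}$ as a polynomial ring over $R_0[x_{1,2},x_{1,2}^{-1}]$ extracted in the proof of Corollary~\ref{corollary}.\ref{cor-b}, with $(\mathbf{tX})_1,(\mathbf{tX})_2$ appearing as two of the indeterminates; your re-derivation of the Koszul picture from (a) and (d) works as well.

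One small point worth tightening in your (c): you assert that $\mathcal N_{\,x_{1,2}}\cong\mathcal A_{\,x_{1,2}}^{\oplus 2}$ ``via the classes of $e_1^*,e_2^*$,'' but the isomorphism produced in your (d) (splitting $\rho$ through its unit third entry) gives a priori a different basis.  You should observe, as a separate step, that Observation~\ref{doo-8.2}.\ref{doo-8.2.b} gives $x_{1,2}[e_j^*]=-x_{2,j}[e_1^*]+x_{1,j}[e_2^*]$ for $j\ge 3$ in $\mathcal N$, so the classes $[e_1^*],[e_2^*]$ generate $\mathcal N_{\,x_{1,2}}$; since $\mathcal N_{\,x_{1,2}}$ is free of rank two by (d), any two generators form a basis (a surjective endomorphism of a finitely generated module over a commutative ring is an isomorphism).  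With that noted, your identification of the localized $\tau$ with the Koszul relation and of $\tau(\xi)$ with $\big[(\mathbf{tX})_1\ \ (\mathbf{tX})_2\big]$ goes through, and the rest is as you say.
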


\begin{proof} (\ref{improved.a}) Let $\mathcal A$, $\mathcal N$, $\mathcal R$, and $J$ be the relevant modules built over $R_0$ and $\mathcal A_{\,\mathbb Z}$, $\mathcal N_{\ \,\mathbb Z}$, $\mathcal R_{\ \mathbb Z}$, and $J_{\mathbb Z}$ be the relevant modules built over ${\mathbb Z}$. We have shown in Proposition~\ref{*.claim} that
\begin{equation}\label{star}0\to \mathcal A_{\, \mathbb Z}\xrightarrow{\tau}
\mathcal N_{\ \,\mathbb Z}\xrightarrow{\tau(\xi)}\mathcal A_{\,\mathbb Z}\to \mathcal R_{\ \mathbb Z}/ J_{\mathbb Z}\to 0\end{equation} is an exact sequence. We know from Corollary~\ref{carry forward} and Theorem~\ref{main-Theorem} that $\mathcal A_{\,\mathbb Z}$, $\mathcal N_{\,\ \mathbb Z}$, and $\mathcal R_{\ \mathbb Z}/J_{\mathbb Z}$ are generically perfect $\mathbb Z[X]$-modules in the sense of \cite[Prop.~3.2 and Thm.~3.3]{BV}; and so, in particular, these modules  are flat $\mathbb Z$-modules. Apply $R_0\otimes_{\mathbb Z}-$ to the constituent short exact sequences of (\ref{star}) in order to learn that $\Tor_1^{\mathbb Z}(R_0,J_{\mathbb Z}\mathcal A_{\mathbb Z})=0$ and $R_0\otimes_{\mathbb Z}\text{(\ref{star})}$, which is isomorphic to (\ref{*.claim.1}),  is exact.

\medskip \noindent(\ref{improved.b}) The 
proof from (\ref{improved.a}) also works for (\ref{improved.b}) because the $\mathbb Z[X]$-modules $A$ and $A'$, built over $\mathbb Z$, are also generically perfect, see Lemma~\ref{KL}. 

\medskip \noindent(\ref{improved.X}) The proof of Corollary~\ref{corollary}.\ref{cor-b} shows that $\mathcal A_{\,x_{1,2}}$ is equal to the polynomial ring $$R_0[x_{1,2}\ ,\ x_{1,2}^{-1}][S_1\ ,\ (\mathbf{tX})_1\ ,\ (\mathbf{tX})_2],$$ where $S_1$ is the list of indeterminates given in (\ref{Ssub1}); furthermore, $J\mathcal A_{\,x_{1,2}}$ is generated by the two variables $(\mathbf{tX})_1$ and  $(\mathbf{tX})_2$.

\medskip \noindent(\ref{improved.Y}) We saw in Corollary~\ref{KL-consq} that $x_{1,2}$ is regular on $A$. Recall from Lemmas~\ref{main-Dream-Lemma} and \ref{KL}  that the ring $A$ and the $A$-module $N$ are perfect $R$-modules, and their annihilators  (as $R$-modules) have   the same grade. It follows that $\Ass N\subseteq \Ass A$ and that $x_{1,2}$ is also regular on $N$. The final assertion is obtained by localizing (\ref{exact-seq.1}), which is exact by (\ref{improved.b}), at $x_{1,2}$.
\end{proof}

\bigskip

\section{Remarks and questions.}\label{further}

\bigskip

The definition of $N$, as given in Notation~\ref{Not2}.\ref{Not2.a.iii}, is that $$\textstyle N=\frac RI\otimes_R\coker(d_1:\bigwedge^3F^*\to F^*).$$ However, if $2$ is a unit in $R_0$, then the next result shows that it  is not necessary to apply the functor $\frac RI\otimes_R-$.
\begin{observation}\label{two}Adopt the language of {\rm\ref{data2}.\ref{data2-one}}, {\rm\ref{Not2}.\ref{Not2.a}} and {\rm(\ref{pre-cplx})}. If $2$ is a unit in $R_0$, then 
  $$\textstyle \coker(d_1:\bigwedge^3F^*\to F^*)$$
is an $R/I$ module; so, in particular $N=\coker(d_1:\bigwedge^3F^*\to F^*)$.
\end{observation}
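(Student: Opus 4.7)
The plan is to show that for every $\phi_4 \in \bigwedge^4 F^*$ and every $\phi_1 \in F^*$ the product $\xi^{(2)}(\phi_4)\cdot \phi_1$ lies in $\im(d_1)=\xi(\bigwedge^3 F^*)$. Since $I$ is, by definition, generated by the elements $\xi^{(2)}(\phi_4)$ as $\phi_4$ varies, this shows $I\cdot F^*\subseteq \im(d_1)$, so $I$ annihilates $\coker(d_1)$ and hence $\coker(d_1)$ is an $R/I$-module. In particular, $N=A\otimes_R\coker(d_1)=\coker(d_1)$.

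The key tool is the analogue of Proposition~\ref{A3} obtained by interchanging the roles of $F$ and $F^*$ (its proof is identical): for $\phi_1\in F^*$, $f_q\in\bigwedge^q F$ and $\phi_p\in\bigwedge^p F^*$,
$$(\phi_1(f_q))(\phi_p)=\phi_1\wedge (f_q(\phi_p))+(-1)^{1+q}\,f_q(\phi_1\wedge \phi_p).$$
Specializing to $f_q=\xi^{(2)}$ and $\phi_p=\phi_4$ and rearranging gives
$$\xi^{(2)}(\phi_4)\cdot \phi_1 \;=\; \bigl(\phi_1(\xi^{(2)})\bigr)(\phi_4) \;+\; \xi^{(2)}(\phi_1\wedge \phi_4).$$
I will show that each of the two summands on the right already lies in $\im(d_1)$. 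For the first, equation~(\ref{Gamma}) gives $\phi_1(\xi^{(2)})=\phi_1(\xi)\wedge \xi$. Because $\phi_1(\xi)\in F$ has degree $1$ and $\xi$ has degree $2$, graded commutativity in $\bigwedge^{\bullet}F$ gives $\phi_1(\xi)\wedge \xi=\xi\wedge \phi_1(\xi)$, and the module axiom $(fg)(\phi)=f(g(\phi))$ then yields
$$\bigl(\phi_1(\xi^{(2)})\bigr)(\phi_4)=\xi\bigl(\phi_1(\xi)(\phi_4)\bigr)=d_1\bigl(\phi_1(\xi)(\phi_4)\bigr)\in \im(d_1),$$
since $\phi_1(\xi)(\phi_4)\in \bigwedge^3 F^*$. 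No hypothesis on $R_0$ is needed for this step.

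The second summand is where the assumption that $2$ is a unit enters, and it is the only obstacle. Use the divided-power identity $\xi\wedge \xi=2\,\xi^{(2)}$ in $\bigwedge^4 F$ (valid over any base) together with the same module axiom to compute
$$\xi^{(2)}(\phi_1\wedge \phi_4)=\tfrac{1}{2}\,(\xi\wedge \xi)(\phi_1\wedge \phi_4)=\tfrac{1}{2}\,\xi\bigl(\xi(\phi_1\wedge \phi_4)\bigr)=\tfrac{1}{2}\,d_1\bigl(\xi(\phi_1\wedge \phi_4)\bigr),$$
which lies in $\im(d_1)$ precisely when $2$ is invertible in $R_0$. Summing the two contributions produces $\xi^{(2)}(\phi_4)\cdot \phi_1\in \im(d_1)$, and the observation follows. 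The whole argument is a short formal manipulation from (\ref{Gamma}), the Leibniz-type identity above, and the relation $\xi\wedge \xi=2\xi^{(2)}$; the only real step is spotting that the Leibniz identity splits the product $\xi^{(2)}(\phi_4)\cdot \phi_1$ into a piece which is automatically a $d_1$-boundary and a piece which becomes a $d_1$-boundary as soon as one can divide by $2$.
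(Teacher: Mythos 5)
Your proposal is correct and follows essentially the same route as the paper: the key identity $\xi^{(2)}(\phi_4)\cdot\phi_1=[\phi_1(\xi^{(2)})](\phi_4)+\xi^{(2)}(\phi_1\wedge\phi_4)$ from (the dual of) Proposition~\ref{A3}, followed by (\ref{Gamma}) and $\xi\wedge\xi=2\xi^{(2)}$ to express each summand as $\xi$ applied to something in $\bigwedge^3F^*$. The only cosmetic difference is that you treat the two summands separately (and note which piece survives over any base), whereas the paper folds everything into a single $\xi(\cdot)$ expression.
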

\begin{proof}If $\phi_4\in \bigwedge^4F^*$ and $\phi_1\in F^*$, then 
\begin{align*}\xi^{(2)}(\phi_4)\cdot \phi_1&= [\phi_1(\xi^{(2)})](\phi_4)+\xi^{(2)}(\phi_1\wedge \phi_4)&&\text{Proposition~\ref{A3}}\\
&= [\phi_1(\xi)\wedge \xi](\phi_4)+{\textstyle\frac 12}\xi(\xi(\phi_1\wedge \phi_4))&&\text{(\ref{Gamma})}\\
&=\xi\Big ([\phi_1(\xi)](\phi_4)+{\textstyle\frac 12}\xi(\phi_1\wedge \phi_4)\Big),\end{align*} which represents $0$ in $N$. \end{proof}
\begin{remarks}Adopt the language of {\rm\ref{data2}} and {\rm \ref{Not2}}. \begin{enumerate}[\rm(a)] \item The hypothesis ``$2$ is a unit in $R_0$'' is essential in Observation~\ref{two}. For example, if $R_0$ is the field $\mathbb Z/(2)$, then
$$\bmatrix 0&0&0&0&x_{1,2}x_{3,4}-x_{1,3}x_{2,4}+x_{1,4}x_{2,3}\endbmatrix^{\rm T}$$is zero in $N$, but is  not in image of $d_1$. So, in particular, if $R_0$ is a field, then the first Betti number of $N$, as a module over $R$, depends on the characteristic of $R_0$, even when $\goth f=5$. We recall that the first Betti number of $A$, as a module, over $R$ depends on the characteristic $R_0$, but not until $\goth f=8$; see, for example, \cite{K-I,K-II,H95}.

\item\label{R5.b} Assume $R_0$ is a field. Suppose that $\mathbb F:\ \dots \to F_i\to \dots$ and $\mathbb G:\ \dots \to G_i\to \dots$ are minimal homogeneous resolutions of $A$ and $N$ by free $R$-modules with $F_i=\bigoplus R(-j)^{\beta_{i,j}}$ and $G_i=\bigoplus R(-j)^{\gamma_{i,j}}$. Then the proof of Theorem~\ref{main-Theorem} shows that the minimal bi-homogeneous resolution of $\mathcal R/J$ by free $\mathcal R$-modules is $\mathbb L:\ \dots \to L_i\to \dots$, with
$$L_i= \bigoplus \mathcal R(-j-1,-2)^{\beta_{i-2,j}}\oplus \bigoplus \mathcal R(-j-1,-1)^{\gamma_{i-1,j}}\oplus 
\bigoplus \mathcal R(-j,0)^{\beta_{i,j}}.$$
Indeed, the  iterated mapping cone 
associated to 
$$\xymatrix{  \mathcal R(-1,-2)\otimes_R \mathbb F[-2]\ar[r]&\mathcal R(-1,-2)\otimes_R A \ar@{>->}[d]^{\tau}\\ 
\mathcal R(-1,-1)\otimes_R \mathbb G[-1]\ar[r]&\mathcal R(-1,-1)\otimes_R N \ar[d]^{\tau(\xi)} \\
\mathcal R\otimes_R \mathbb F\ar[r]&\mathcal R\otimes_R A\ar@{->>}[d]\\&\mathcal R/J}$$
is a bi-homogeneous resolution of $\mathcal R/J$ and consideration of the $t$-degree shows that this resolution is minimal.
\item Retain the language of (\ref{R5.b}). If the characteristic of $R_0$ is zero, then the resolution $\mathbb F$ is given in Theorem 6.4.1 and Exercises 31--33 on page 222 in \cite{W03}. Can the geometric method of \cite{W03} also be used to obtain  the minimal homogeneous equivariant resolution of $N$ by free $R$-modules?   
\item One consequence of (\ref{R5.b}) is that the minimal homogeneous resolution $\mathbb G$ of $N$ by free $R$-modules is self-dual. Is this fact obvious for
 some other reason?
\item Is the resolution $\mathbb X$ of $N$ by free $A$-modules from Observation~{\rm\ref{yet-to-come}.\ref{yet.e}} a linear complex?
\end{enumerate}\end{remarks}

\end{document}